\crefname{hypothesis}{Hypothesis}{Hypotheses}
\title{Manifold learning and nonlinear homogenization\thanks{Submitted to the editors DATE\funding{The work of JL is supported in part by the National Science Foundation via grants DMS-1454939 and DMS-2012286. The work of SC, QL, and SW is supported in part by the National Science Foundation via grants CCF-1740707 and DMS-2023239. The work of SW is further supported in part by National Science Foundation via grant 1934612, Subcontract 8F-30039 from Argonne National Laboratory, and Award N660011824020 from the DARPA Lagrange Program. The work of SC and QL is further supported in part by Wisconsin Data Science Initiative and National Science Foundation via grant DMS-1750488.}}}
\author{Shi Chen\thanks{Mathematics Department, University of Wisconsin-Madison, Madison, WI 53706 ({\tt schen636@wisc.edu})}
	\and Qin Li\thanks{Mathematics Department and Discovery Institute, University of Wisconsin-Madison, Madison, WI 53706 ({\tt qinli@math.wisc.edu})}
	\and Jianfeng Lu\thanks{Department of Mathematics, Department of Physics, and Department of Chemistry, Duke University, Durham, NC 27708 ({\tt jianfeng@math.duke.edu}) }
	\and Stephen J. Wright\thanks{Computer Sciences Department, University of Wisconsin, Madison, WI 53706 ({\tt swright@cs.wisc.edu})}
}
\newcommand{\Kcal}{\mathcal{K}}
\newcommand{\Rbb}{\mathbb{R}}
\newcommand{\eps}{\epsilon}
\newcommand{\wt}[1]{\widetilde{#1}}
\newcommand{\ol}[1]{\overline{#1}}
\DeclareMathOperator*{\argmin}{argmin}
\newcommand{\algrule}[1][.2pt]{\hspace*{-.6in}\hrulefill}
\begin{document}

\maketitle

\begin{abstract}
  We describe an efficient domain decomposition-based framework for
  nonlinear multiscale PDE problems. The framework is inspired by
  manifold learning techniques and exploits the tangent spaces spanned
  by the nearest neighbors to compress local solution manifolds. Our
  framework is applied to a semilinear elliptic equation with
  oscillatory media and a nonlinear radiative transfer equation; in
  both cases, significant improvements in efficacy are observed. This
  new method does not rely on detailed analytical understanding of the
  multiscale PDEs, such as their asymptotic limits, and thus is more
  versatile for general multiscale problems.
\end{abstract}

\begin{keywords}
  Nonlinear homogenization; multiscale problems; manifold learning;
  domain decomposition.
\end{keywords}

\begin{AMS}
  65N99
\end{AMS}

\section{Introduction} \label{sec:intro}
Homogenization is a body of theory and methods to study differential,
or differential-integro equations with rapidly oscillating
coefficients. It traces back to the famous work of
Bensoussan-Lions-Papanicolaou~\cite{BeLiPa:2011}, and builds on
several other important
developments~\cite{HoWu:1997,EWEn:2003,BaLi:2011,DuGo:2000,GoMe:2001,BaSaSe:1984,GoMe:2003,AbPuVo:2012}. Generally
speaking, the goal of homogenization is to derive asymptotic limiting
equations as accurate surrogates of the original equations that do not
have scale separations. The core technique is asymptotic analysis.

\subsection{Goal}
There are a number of famous examples that use homogenization
techniques, such as elliptic equations with rapidly oscillating
media~\cite{BeLiPa:2011}, Schr\"odinger equation with small rescaled
Planck constant~\cite{GeMaMaPo:1997}, the neutron transport equation
with small Knudsen number~\cite{BeLiPa:1979,GuWu:2017}, compressible
Euler equation with small Mach
number~\cite{KlMa:1981,KlMa:1982,Sc:1986}, and Boltzmann-type
equations in the fluid regime~\cite{BaGoLe:1991}. All these examples
have the form
\begin{equation}\label{eqn:generic}
\mathcal{N}^\eps u^\eps = f\,,
\end{equation}
where $\mathcal{N}^\eps$ is a partial differential operator that
depends explicitly on the small parameter $\eps$. The term $f$
on the right-hand side represents the external information---the source terms, the boundary conditions, the initial
conditions, and so on---which has no dependence on $\eps$. Due to
the $\eps$-dependence of $\mathcal{N}^\eps$, the PDE is rather stiff:
the solutions either exhibit high oscillations (such as the
Schr\"odinger equation with small value of the rescaled Planck
constant, or the elliptic equation with rough media), or present
boundary/initial layers within which solutions change rapidly (such as
the Knudsen layer in kinetic systems). The oscillations and layers
themselves usually do not carry any interesting physical information;
one is more interested in extracting physically meaningful quantities
from the solutions directly, with these details omitted. Thus, it is
important to evaluate the limiting behavior of \eqref{eqn:generic} as
$\eps\to 0$. There are two contrasting approaches in the literature
that enable this task: One is analytical and the other is numerical.

The analytical approach seeks the asymptotic limit of the PDE
\eqref{eqn:generic}, defined as follows:
\begin{equation}\label{eqn:generic_hom}
\mathcal{N}^\ast u^\ast = f\,.
\end{equation}
The term ``asymptotic limit'' refers to the fact that for any
reasonable $f$, in a certain space with a certain metric, we have
\begin{equation} \label{eq:epserr}
\|u^\eps-u^\ast\|\to 0 \quad \mbox{as $\eps \to 0$.}
\end{equation}
A classical way to derive this limit is to perform Hilbert expansion
in terms of $\eps$. Here, we define the ansatz
\[
u^\eps = u_0 +\eps u_1 + \eps^2 u_2+\dots
\]
and substitute into \eqref{eqn:generic}, then balance the two sides in
terms of $\eps$. Typically, at some level of the expansion, a closure
is performed to derive the effective operator $\mathcal{N}^\ast$. This
framework is highly effective and general; we will give explicit
examples in later sections.

On the numerical side, we look for cheap solvers that compute the
asymptotic limits. A typical requirement for classical numerical
solvers to be accurate is that the discretization has to resolve the
smallness of $\eps$. This can lead to high numerical and memory cost,
sometimes beyond reasonable computational resources. The focus of
``numerical homogenization" or ``asymptotic preserving" is thus to
design schemes that capture asymptotic limits of the solutions with
relaxed (and thus more efficient) discretization requirements. One
technique is to explore analytical results and translate them to the
discrete setting: The asymptotic limiting equations are derived first,
and then a ``macro solver'' for the limiting equation and a ``micro
solver'' that solves the original equation, are combined in some
way. This strategy has been applied to deal with the Boltzmann-type
equations, the Schr\"odinger equation, and the elliptic equations with
highly oscillatory media, under the name of designing ``asymptotic
preserving'' schemes, finding semi-classical limits, and performing
``numerical homogenization.'' There is a significant drawback of this
approach: The design of the numerical method is based completely on
analytical understanding, so numerical development necessarily lags
analytical progress. This fact significantly limits the role of
multiscale computation.

This observation motivates the question that we address in this paper.
Given a system of the form~\cref{eqn:generic}, knowing it has an
asymptotic limit~\cref{eqn:generic_hom} but not knowing the specific
form of this limit, can we design an efficient, accurate solver?

\subsection{Approach}

In this paper, we propose a numerical approach based on
``compression.''  Classical methods require the use of
$N_\eps\sim\frac{1}{\eps^\alpha}$ grid points to achieve accuracy and
stability in solving~\cref{eqn:generic}, for some power
$\alpha>0$. Note that $N_\eps$ blows up to infinity as $\eps \to
0$. By contrast, the limiting equation \eqref{eqn:generic_hom} is
independent of $\eps$, so we typically require only $N_\ast$ grid
points (a number that is independent of $\eps$) to solve this
system. Thus, the information carried in $N_\eps$ degrees of freedom
can potentially be ``compressed'' into $N_\ast$ degrees of freedom,
provided that we can tolerate an asymptotic error of order $\eps$ in
the solution (see \eqref{eq:epserr}).

How can we design an approach to solving \eqref{eqn:generic} that
exploits compression? Our roadmap consists three steps: (a) identify
the solution set that can be compressed; (b) compress the set into a
smaller effective solution set; (c) for a given new data point $f(x)$,
single out the solution from the effective set. We call the first two
steps the offline stage, and the last step the online stage.

For linear equations, this roadmap has been followed by several
authors
in~\cite{buhr2018randomized,ChLiLuWr:2018randomized,ChLiLuWr:2018,ChLiLuWr:2019}. When
the setup is linear, the solution set is a space, and thus information
is entirely coded in representative basis functions. These basis
functions can be found in the offline stage, and a Galerkin
formulation can then be used to identity the linear combination of the
basis for a given $f(x)$ in the online stage. To find the
representative basis functions, one can utilize the random sampling
technique developed for finding low rank structures of matrices
in~\cite{HaMaTr:2011}, where the authors proved that a few random
samples are able to reconstruct the low-rank column space with high
probability; see~\cite{ChLiLuWr:2018randomized}.

In this article, we develop the roadmap in the nonlinear setting. The
extension is not straightforward. Since the solution set is not a
space in the nonlinear setup, the notion of ``basis function'' does
not even exist. Instead, we seek an $N_\ast$-dimensional approximating
manifold in an $N_\eps$-dimensional space. For every given $f(x)$,
there is a corresponding numerical solution $u^\eps$ to the original
equation~\cref{eqn:generic} in the $N_\eps$ space. Within $\eps$
distance there exists its homogenized solution $u^\ast$ to the
limiting equation~\cref{eqn:generic_hom}. Since $u^\ast$ relies on
only $N_\ast$ degrees of freedom, as $f(x)$ varies, the variations of
$u^\ast$ form a manifold of dimension at most $N_\ast$.

By using this argument, we formulate the homogenization problem (in
the nonlinear setting) into a manifold-learning problem: Suppose we
can generate a few configurations of $f(x)$ and compute the associated
numerical solutions, can we learn to represent the solution manifold?
Further, given a completely new configuration of $f(x)$, can we
quickly identify the corresponding solution? These two questions are
addressed in the offline and online stages, respectively.

Many different approaches have been proposed for manifold learning
based on observed point clouds. They typically look for key features
that the points share, either locally (as in local linear embedding
(LLE)~\cite{RoSa:2000}, multi-scale
SVD~\cite{AlChMa:2012,LiMaRo:2017}, local tangent space
alignment~\cite{ZhZh:2004}), or globally (as in the use of heat
kernels~\cite{CoLaLeMaNaWaZu:2005,BrNi:2002}). The strategy we propose
here is not a direct application of any one of these ideas, but it
uses elements of the the local linear embedding and multi-scale SVD
approaches. Specifically, we seek local linear approximations to the
solution map, and cover the solution manifold with a number of these
tangent space ``patches.''

We define the solution map as follows:
\begin{equation} \label{eq:defS}
\mathcal{S}^\epsilon:\quad f\in\mathcal{X} \to u^\epsilon\in\mathcal{Y}\,.
\end{equation}
It maps the source term and initial/boundary conditions captured in
$f(x)$ to the solution of the equation \eqref{eqn:generic}.
To find the solution manifold, we randomly sample a large number of
configurations $f_i$ in $\mathcal{X}$, and compute the solution
$u^\epsilon_i = \mathcal{S}^\epsilon f_i \in\mathcal{Y}$ associated
with each of these configurations. These solutions form a point cloud
in a high dimensional space $\mathcal{Y}$. We subdivide the set of configurations $\{f_i\}$
into a number of small neighborhoods, and we look for the tangential
approximation to the mapping \eqref{eq:defS} on each of these
neighborhoods. Given a configuration $f$, we identify the neighborhood
to which it belongs, and interpolate linearly to obtain the
corresponding solution.

We summarize our online-offline strategy as follows. (Some
modifications described in Section~\ref{sec:algorithm} will reduce the
cost of implementation.)

\begin{itemize}
\item[] \textbf{Offline}: Randomly sample $f_i(x)$, $i = 1,\dotsc,N$,
  and find solutions $u^\epsilon_i = \mathcal{S}^\epsilon f_i$;
  \item[] \textbf{Online}: Given $f(x)$:
  \begin{itemize}
    \item[] \textbf{Step 1}: Identify the $k$-nearest neighbors of
      $f(x)$, call them $f_{i_j}, j = 1,2,\dotsc,k$, with $f_{i_1}$
      being the nearest neighbor;
    \item[] \textbf{Step 2}: Compute
    \[
    \mathcal{S}^\epsilon \phi \approx u^\epsilon_{i_1} +\mathsf{U}\cdot {c}\,,\quad \text{with} \quad\mathsf{U} =
    \begin{bmatrix}
    | & & | \\ u^\epsilon_{i_2}-u^\epsilon_{i_1} & \dots & u^\epsilon_{i_k}-u^\epsilon_{i_1} \\ | & & |
    \end{bmatrix},
    \]
    where $c$ is a set of coefficient that fits $f-f_{i_1}$ with a
    linear combination of $f_{i_j} - f_{i_1}$, for $j = 2,3,\dotsc,k$.
  \end{itemize}
\end{itemize}

In \textbf{Step 2} we used the fact that the solution manifold if of low dimensional locally. To make the strategy mathematically precise, we need to address
several questions, including the following.
\begin{itemize}
\item How should we sample $f_i(x)$ during the offline step?
\item What metric should we use to quantify distance?
\item Since computing each solution map $u^\epsilon_i = \mathcal{S}^\epsilon f_i$ is
  expensive, is there anyway to reduce the cost further?
\end{itemize}
We discuss these questions in the following sections. We stress that the manifold learning technique that we investigate in this paper works best when the intrinsic dimensionality of the problem is significantly smaller than the typical required degrees of freedom, and this holds true for all homogenizable problems where the discretization of the limiting equation eliminates the $\epsilon$ dependence. For problems without $\epsilon$ dependence, and the dimension of the numerical solution is only moderately large, the approach that we take is not expected to reduce cost.

\subsection{The layout of the paper}
We discuss the general recipe of the algorithm in
\Cref{sec:algorithm}, then show how the approach can be applied to two
examples (a semi-linear elliptic equation and a nonlinear radiative
transfer equation coupled with a temperature term) in
\Cref{sec:LowRank_elliptic} and \Cref{sec:LowRank_RTE},
respectively. In both sections, we review the relevant homogenization
theory for the equations, study the low rank structure of the
tangential solution spaces, and present numerical evidence for the
efficacy of our approach.

\section{Framework}\label{sec:algorithm}

Our approach is a domain decomposition algorithm that makes use of
Schwarz iteration.

After decomposing the domain into multiple overlapping patches, the
Schwarz method solves the PDE in each patch, conditioned on agreement
of solutions in the overlapping regions, which are boundary regions
for the adjacent patches. At the initial step, these boundary
conditions are unknown, so some initial guess is made. Subsequently,
solution of PDEs on each patch alternates with updates of the solution
on the overlapping regions, until convergence is obtained with respect
to certain criteria. The cost of the entire process is determined by
the number of iterations and the cost of the local solves, noting
that, as with any domain decomposition method, the local solves can be
performed in parallel. The approach is efficient when the local solves
can be performed much more efficiently on the available computing
resources than a solver that does not decompose the domain. The
optimal domain partitioning depends on the conditioning of the problem
and is often specific to the problem under study. Comprehensive
descriptions of the Schwarz method appear
in~\cite{SmBjGr:2004,ToWi:2006}.

This basic Schwarz iteration does not fully address the issue of
$\eps$-dependence that we discussed in \Cref{sec:intro}, since local solvers still necessarily depend on $\eps$. As a step
toward making use of compression, we take the viewpoint that the
purpose of the local solution step is to implement a
boundary-to-boundary map, taking one part of the boundary conditions
on a patch and using the solution of the resulting PDE to update the
boundary conditions for its neighboring patches.
We propose to learn the boundary-to-boundary maps in an ``offline''
stage, by running the local solvers as many times as are needed to
attain the desired accuracy in this map.
This offline stage comes with a high overhead cost, but the
computation is done only once, and we hope that the cost of the online
stage is greatly reduced by having the boundary-to-boundary maps
available. Note that this ``offline" learning process is distinct from
the \textbf{Offline} stage discussed in Section 1. With the
application of domain decomposition, it is the local behavior that
needs to be learned, instead of the full $u^\epsilon$.

In the linear setting, building the boundary-to-boundary maps is quite
straightforward. It amounts roughly to finding all discrete Green's
functions, with the degree of freedom being determined by the number
of grid points on the patch boundary, with one Green's function per
grid point. In the nonlinear setting, the boundary-to-boundary map is
nonlinear, so we can no longer build a linear basis, and we turn to
manifold learning approach to approximate the map. Specifically, in
the offline stage, we would sample randomly some configurations and
find the corresponding image under the map. The resulting point cloud
in high dimensional space can be viewed as samples of the manifold,
which we can then learn by means of local approximate tangential
planes. In the online stage, these tangential planes are used as
surrogates to local boundary-to-boundary maps.

Before presenting details of the offline and online stage
computations, we specify the setup and notation.  We consider the
following nonlinear PDE with Dirichlet boundary conditions in a domain
$\Omega\subset\mathbb{R}^2$:
\begin{equation}\label{eqn:general}
\begin{cases}
\mathcal{N}^\eps u^\eps = 0\,, & \quad \text{in }\Omega \\
u^\eps = \phi, & \quad \text{on } \partial\Omega,
\end{cases}
\end{equation}
where, as usual,  $\eps$ indicates the small scale of the problem.  For
simplicity, we will assume throughout a square geometry $\Omega = [0,
  L]^2$.  The domain $\Omega$ is decomposed into overlapping
rectangular patches defined by
\begin{equation}
\Omega = \bigcup_{m\in J} \Omega_m,\quad \text{with}\quad \Omega_m = (x_{m_1}^{(1)},x_{m_1}^{(2)})\times(y_{m_2}^{(1)},y_{m_2}^{(2)})\,,
\end{equation}
where $m = (m_1,m_2)$ is a multi-index and $J$ is the collection of the indices
\[
J = \{m = (m_1,m_2):\,m_1 = 1,\dots,M_1,\,m_2 = 1,\dots,M_2\}\,.
\]
This setup is illustrated in Figure~\ref{fig:elliptic_decomp}. For
each patch we define the associated partition-of-unity function
$\chi_m$, which has $\chi_m(x) \geq 0$ and
\begin{equation}\label{eqn:POU_elliptic}
\chi_m(x) = 0\quad \text{on}\,\, x \in \Omega\backslash\Omega_m\,,\quad \sum_m\chi_m(x) = 1, \quad \forall x \in \Omega\,.
\end{equation}
We set $\partial\Omega_m$ to be the boundary of patch $\Omega_m$ and
denote by $\mathscr{N}(m)$ the collection of indices of the neighbors
of $\Omega_m$. In this particular 2D case, we have
\begin{equation}
\mathscr{N}(m) = \{(m_1\pm 1, m_2)\} \cup \{(m_1,m_2\pm 1)\} \subset J\,.
\end{equation}

\begin{figure}[htbp]
  \centering
  \includegraphics[width=0.45\textwidth]{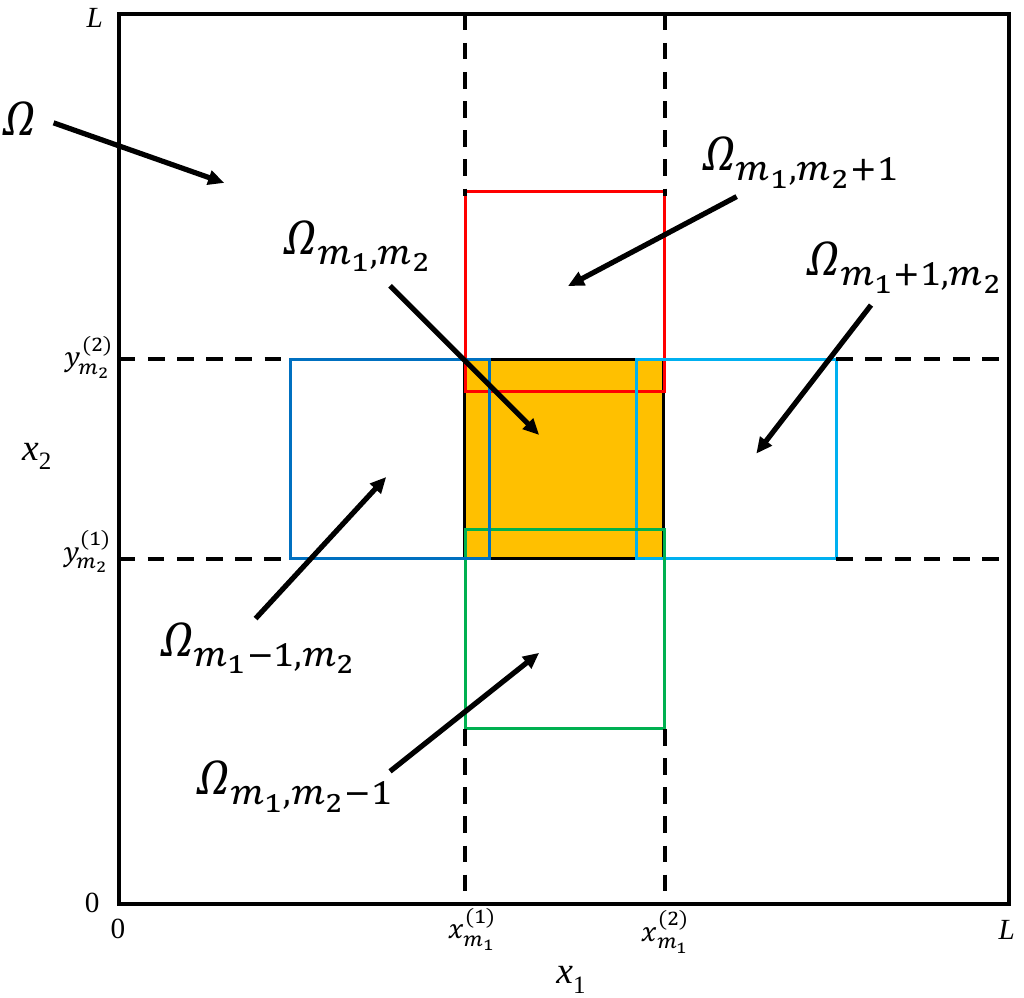}
  \includegraphics[width=0.45\textwidth]{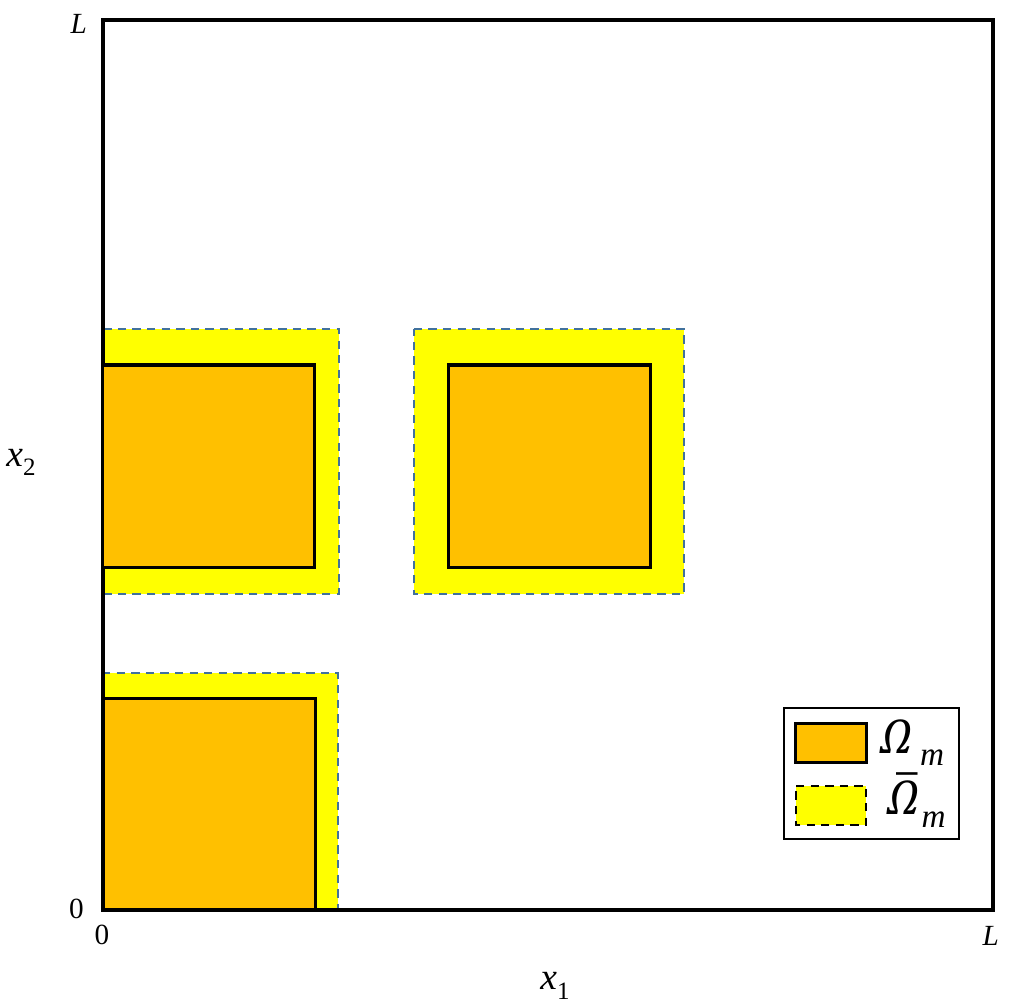}
  \caption{The plot on the left shows the domain decomposition for a square geometry. Each patch is labeled by a multi-index $m =
    (m_1,m_2)$. The adjacent patches of $\Omega_m$ are defined to be
    the patches on its north/south/west/east sides. The plot on the
    right demonstrates the use of local enlargement to damp boundary
    effects.}
  \label{fig:elliptic_decomp}
\end{figure}

Assume that the equation~\cref{eqn:general} is well-posed, meaning
that given $\phi$ in some function space $\mathcal{X}$, there exists a
unique solution $u^\eps$ in another function space
$\mathcal{Y}$. Assume further that the local nonlinear equation on
patch $\Omega_m$ defined by
\[
  \begin{cases}
  \mathcal{N}^\eps u_m^\eps = 0\,, & \; \text{in }\Omega_m\,, \\
  u_m^\eps = \phi_m, & \; \text{on } \partial\Omega_m\,,
  \end{cases}
\]
is well-posed, given local boundary condition $\phi_m$ in some
function space $\mathcal{X}_m$, and that the solution $u_m^\eps$ lives
in space $\mathcal{Y}_m$. We further define the following operators.
\begin{itemize}
  \item $\mathcal{S}^\epsilon_m$ denotes the solution operator that maps local
    boundary condition $\phi_m$ to the local solution $u_m^\eps$:
  \[
  \mathcal{S}^\epsilon_m: \mathcal{X}_m \to \mathcal{Y}_m, \quad \mathcal{S}^\epsilon_m \phi_m = u^\epsilon_m\,.
  \]

  \item $\mathcal{I}_{m}^{l}$ denotes the trace
    operator for all $l\in\mathscr{N}(m)$:
  \[
  \mathcal{I}_{m}^{l}u_l^\eps = u_l^\eps|_{\partial\Omega_m\cap\Omega_l}, \quad l\in\mathscr{N}(m)\,,
  \]
  which takes the value of $u^\eps_l$ restricted on the boundary
  $\partial\Omega_m\cap\Omega_l$. Here we assume that the space
  $\mathcal{Y}_l$ allows for trace.

  \item $\mathcal{P}_m$ denotes the boundary update operator, mapping
    $\bigoplus_{l\in\mathcal{N}(m)}\mathcal{X}_{l}$ to $\mathcal{X}_m$
  \[
    \mathcal{P}_m(\phi_{l},\, l\in\mathscr{N}(m)) =
    \begin{cases}
      \mathcal{I}_m^{l}\mathcal{S}^\epsilon_{l}\phi_{l}, & \; \text{on } \partial\Omega_m\cap\Omega_{l}, \quad l\in\mathscr{N}(m)\,,\\
      \phi|_{\partial\Omega_m\cap\partial\Omega}, &
      \; \text{on } \partial\Omega_m\cap\partial\Omega\,.
    \end{cases}
  \]
  Note that on the points in $\partial \Omega_m \cap \partial \Omega$,
  the boundary condition from the whole domain $\Omega$ is imposed.
\end{itemize}

The offline and online stage of the algorithm are essentially to
construct and to evaluate $\mathcal{P}_m$, as we now show.

\subsection{Offline Stage}\label{sec:framework_offline}

The goal of the offline stage is to construct a dictionary to
approximate $\mathcal{P}_m$ for every $m\in J$. To eliminate any
boundary layer effect, we enlarge each local patch slighly by adding a
margin around its edges (except for the edges that correspond to part
of the boundary of the whole domain). The enlarged domains are denoted
by $\wt{\Omega}_m$ and illustrated in
Figure~\ref{fig:elliptic_decomp}.

We denote by $\wt{\mathcal{X}}_m$ the space of boundary conditions on
$\partial\wt{\Omega}_m$ equipped with norm $\|\cdot\|$, and define a
ball in $\wt{\mathcal{X}}_m$ as follows:
\[
B(R_m;\,\wt{\mathcal{X}}_m) = \{\wt{\phi}\in\wt{\mathcal{X}}_m: \|\wt{\phi}\|\leq R_m\}\,.
\]
First, we draw $N$ samples randomly from the ball, as  follows:
\[
\wt{\phi}_{m,i}\in B(R_m;\,\wt{\mathcal{X}}_m)\,,\quad i = 1,\dots,N\,.
\]
(The specific measure used in drawing depends on the particular
problem being considered; we will make it more precise in the examples
below.) For these samples we obtain local solutions $\wt{u}_{m,i}$
from the following PDEs:
\begin{equation}\label{eqn:general_local}
\begin{cases}
\mathcal{N}^\eps \wt{u}_{m,i}^\eps = 0\,, & \quad \text{in }\wt{\Omega}_m \\
\wt{u}_{m,i}^\eps = \wt{\phi}_{m,i}\,,  & \quad \text{on } \partial\wt{\Omega}_m
\end{cases}
\end{equation}
We build a dictionary from these solutions by confining them in the
interior $\Omega_m$ and the boundary $\partial\Omega_{m}$:
\begin{equation}\label{eqn:general_dic}
\mathscr{I}_m = \{\psi_{m,i}=\wt{u}_{m,i}|_{\Omega_m}\}_{i=1}^N,\quad \mathscr{B}_m = \{\phi_{m,i}=\wt{u}_{m,i}|_{\partial\Omega_m}\}_{i=1}^N\,.
\end{equation}
Since the problems that we consider are homogenizable, meaning that
the solution manifold is of low dimensional, the value of $N$ can be
relatively small.

\begin{remark}
Two remarks are in order.
\begin{itemize}
\item How to sample? That is, how to find a measure $\mu_m$ on
  $\wt{\mathcal{X}}_m$ for drawing samples? To make the setting more
  precise, we discretize the space $\wt{\mathcal{X}}_m$ to get
  $\wt{\mathcal{X}}^h_m$ equipped with norm $\|\cdot\|_h$, and define
  a measure $\mu_m^h$ on the ball $B(R_m;\,\wt{\mathcal{X}}^h_m)$.
  Denoting the dimension of $\wt{\mathcal{X}}_m$ by $p$, we sample the
  magnitude and the angle separately, that is, we take the measure as
  a product $\mu_m^h = \mu_{r,m}\otimes\mu_{S,m}$ with $\mu_{r,m}$
  being the radial part on $(0,R_m)$ and $\mu_{S,m}$ being the measure
  on the unit sphere $S^{p-1} = \{\phi\in\wt{\mathcal{X}}^h_m:
  \|\phi\|_h=1\}\subset\Rbb^p$. The angular measure $\mu_{S,m}$ is
  chosen to be the uniform and the radial part $\mu_{r,m}$ has a
  density function $f(r) = \frac{D+1}{R_m^D}r^D$. The number $D$ here
  plays the role of effective dimension; it should depend on the
  expected dimension of solution manifold. Note that if we take $D =
  p-1$, the measure $\mu_m^h$ is exactly the uniform measure on the
  full ball $B(R_m;\,\wt{\mathcal{X}}^h_m)\subset\Rbb^p$. The question
  of selecting $D$ in a rigorous way is left to future
  research. (See~\Cref{app:elliptic} and~\Cref{app:nrte} for further
  details on this issue.)

  \item The physical boundary. To respect the boundary condition on
    $\partial\Omega$, the boundary patches $\Omega_m$ that touch the
    physical boundary need to be treated differently. For each sample
    $\wt{\phi}_{m,i}$, the physical boundary condition is enforced on
    the set
    $\partial\Omega_m\cap\partial\Omega$. Random sampling is done only
    on the remaining part of the patch boundary, that is,
    $\partial\Omega_m\backslash\partial\Omega$. See~\Cref{app:elliptic}
    and~\Cref{app:nrte} for details.
\end{itemize}
\end{remark}

\subsection{Online Stage}
The online stage finds a particular solution $u$ for given boundary
data $\phi$, based on information accumulated in the offline
stage. This process is carried out through a Schwarz iteration to
update local boundary conditions on each patch.

Denote by $\phi^{(n)}=[\cdots\,,\phi_{m}^{(n)}\,,\cdots]$ the
collection of local boundary conditions at the $n$th iteration, with
$m$ being the patch index. At each iteration, we need to obtain
$\phi_{m}^{(n+1)}=\mathcal{P}_m\phi^{(n)}$. For each $m\in J$, let
$\phi_{m,i_q^{(n)}}$ be the $q$-th $L^2$-nearest neighbor of
$\phi_m^{(n)}$ in $\mathscr{B}_m$, $q = 1,2,\dotsc,k$. These
neighbors, supported on $\partial\Omega_m$ lie (approximately) on a
local tangential plane centered at $\phi_{m,i_1^{(n)}}$:
\begin{equation}\label{eqn:local_basis_bdy_general}
\Phi_{m}^{(n)} =
\begin{bmatrix}
| & & |\\
\phi_{m,i_2^{(n)}}-\phi_{m,i_1^{(n)}} & \dots & \phi_{m,i_k^{(n)}}-\phi_{m,i_1^{(n)}} \\
| & & |
\end{bmatrix}\,.
\end{equation}
Associated with this plane, we also formulate the solution space
centered around $\psi_{m,i_1^{(n)}}$:
\begin{equation}\label{eqn:tan_space_general}
\Psi_{m}^{(n)} =
\begin{bmatrix}
| & & | \\
\psi_{m,i_2^{(n)}}-\psi_{m,i_1^{(n)}} & \dots & \psi_{m,i_k^{(n)}}-\psi_{m,i_1^{(n)}} \\
| &  & |
\end{bmatrix}\,.
\end{equation}
Locally, the map between these two planes is approximately linear, and
thus to find $\phi_{m}^{(n+1)}=\mathcal{P}_m\phi^{(n)}$, we look for a
linear interpolation of $\phi_m^{(n)}$ on $\Phi_m^{(n)}$, and map this
interpolation to $\Psi^{(n)}_m$. More precisely, we look for
$c_m^{(n)}$ that solves the least-squares problem
\begin{equation}\label{eqn:coef_general}
c_m^{(n)} = \argmin_{v_m\in\mathbb{R}^{k-1}}\big\|\phi_m^{(n)}-\phi_{m,i_1^{(n)}}-\Phi_{m}^{(n)}v_m \big\|_{L^2(\partial\Omega_m)}\,,
\end{equation}
and define the approximate solution to be:
\begin{equation}\label{eqn:general_approx_soln_op}
  u_m^{(n)} ={\mathcal{S}^\epsilon}_m\phi_m^{(n)} \approx \psi_{m,i_1^{(n)}}+\Psi_{m}^{(n)}c_m^{(n)}\,.
\end{equation}

To summarize: the map $\mathcal{P}_{m}\phi^{(n)}$ is a composition of
${\mathcal{P}}_{m}(\phi_l^{(n)},\,l\in\mathscr{N}(m))$ with
$l\in\mathscr{N}(m)$, where
\begin{equation}\label{eqn:patch2_general}
\phi_{m}^{(n+1)} = {\mathcal{P}}_{m}(\phi_l^{(n)},\,l\in\mathscr{N}(m)) = \mathcal{I}_{m}^{l}{\mathcal{S}^\epsilon}_l\phi_l,\quad\text{on } \partial\Omega_m\cap\Omega_{l}\,.
\end{equation}

Once a preset error tolerance is achieved, at some step $n$ (usually
because the local boundary condition barely changes), the global
solution is patched up as follows:
\begin{equation}\label{eqn:general_global_soln}
u^{(n)} = \sum_{m\in J} \chi_m u_m^{(n)}\,,
\end{equation}
where $u_m^{(n)}$ is the local
solution~\cref{eqn:general_approx_soln_op} and
$\chi_m:\Omega\rightarrow\mathbb{R}$ is the smooth partition of unity
associated with the partition.

We summarize the procedure in Algorithm~\ref{alg:general}.

\begin{remark}
  The Johnson-Lindenstrauss lemma~\cite{JoLi:1984} indicates that the
  search for $d$-dimensional $k$ nearest neighbors in a data set of
  size $N$, with distance error $\delta$, can be done in query time
  $O\Bigl(kd\frac{\log N}{\delta^2}\Bigr)$ and storage cost
  $N^{O(\log(1/\delta)/\delta^2)}+O\Bigl(d\bigl(N+\frac{\log
    N}{\delta^2}\bigr)\Bigr)$~\cite{InMo:1998,AnInRa:2018}. In addition,
    a cost of $O(k^2 d)$ is incurred at each iteration, due to $L^2$
    minimization for each patch via QR factorization. In our
  setting, $d$ is equal to the degrees of
  freedom on the boundary $\partial\Omega_m$.
\end{remark}

\begin{remark}
To avoid notational complexity, the discussion above does not consider
the physical boundary $\partial \Omega$. If a patch contains part of
$\partial \Omega$, then that particular section of the patch is not
updated. The true boundary condition $\phi$ is enforced in every
iteration. The derivation is straightforward and is omitted from the
discussion.
\end{remark}

\begin{algorithm}
\caption{Multiscale solver for nonlinear homogenizable
  equations~\cref{eqn:general}.}\label{alg:general}

\begin{algorithmic}[1]
\State Given the radius $R_m$, the number of nearest neighbors $k$, the tolerance $\delta$ and the initial guess of boundary conditions $\phi_{m}^{(0)}$ on each patch $m \in J$.
\State {\bf Domain Decomposition:}
\State \hspace*{0.2in} Decompose $\Omega$ into overlapping patches: $\Omega=\bigcup_{m\in J}\Omega_m$, and enlarge each patch to obtain $\wt{\Omega}_m$.

\State {\bf Offline Stage:} Prepare local dictionaries on interior patches $\Omega_m$.
\Indent
   \State Step 1: For each $m\in J_i$, generate $N$ samples $\wt{\phi}_{m,i}$ from $B(R_m;\wt{\mathcal{X}}_m)$;
   \State Step 2: For all $i$, call function
   \[
   \wt{u}_{m,i}=\text{LocPDESol}(\wt{\Omega}_m,\wt{\phi}_{m,i})\,;
   \]
   \State Step 3: Collect local dictionaries according to~\cref{eqn:general_dic} for $\mathscr{B}_m$ and $\mathscr{I}_m$.
\EndIndent

\State {\bf Online Stage:} Schwarz iteration.
\Indent
\While{$\sum_{m}\|\phi_{m}^{(n)}-\phi_{m}^{(n-1)}\|_{L^2(\partial\Omega_m)}\geq\delta$}
\For{$m \in J$} \State Search for $k$-nearest neighbors of
$\phi_{m}^{(n)}$ in $\mathscr{B}_m$; \State Solve $c_m^{(n)}$ from the
least-squares problem~\cref{eqn:coef_general}; \State Update
$\phi_{m}^{(n+1)}$ by~\cref{eqn:patch2_general}.  \EndFor \State
$n \gets n+1$ \EndWhile \EndIndent \\\Return Global solution $u^{(n)}$
defined by~\cref{eqn:general_global_soln}.

\algrule

\setcounter{ALG@line}{0}
\Function{LocPDESol}{Local domain $\Omega_m$, Boundary condition $\phi_m$}
\State Perform the standard finite difference or finite element methods to solve the local nonlinear equation~\cref{eqn:general_local};
\\\Return Local solution $u_m$
\EndFunction

\end{algorithmic}
\end{algorithm}

\section{Example 1: Semilinear elliptic equations with highly oscillatory media}\label{sec:LowRank_elliptic}
In this section, we apply the methodology described above to solve
semilinear elliptic equations. Semilinear elliptic equations with
multiscale structures arise in a variety of situations, for instance,
in nonlinear diffusion generated by nonlinear
sources~\cite{JoLu:1973}, and in the gravitational equilibrium of
stars~\cite{Li:1982,Ch:1957}.  As fundamental models in many areas of
physics and engineering, the equations have received considerable
attention.

We consider the equation
\begin{equation}\label{eqn:semi_elliptic}
\begin{cases}
-\nabla_x\cdot\left(A\left(x,\tfrac{x}\eps\right)\nabla_x u^\eps\right) + f(u^\eps) = 0,\quad& x\in\Omega\,, \\
u^\eps(x) = \phi(x),\quad& x\in\partial\Omega\,.
\end{cases}
\end{equation}
The physical domain is $\Omega\subset \mathbb{R}^d$ with $d\geq1$, and
Dirichlet boundary condition is given as $\phi(x)$. The permeability
$A(x,y) = (a_{ij}(x,y))_{d\times
  d}:\Omega\times\mathbb{R}^d\to\mathbb{R}^{d\times d}$ depends on
both the slow variable $x$ and the fast variable $y=x/\eps$ and is
highly oscillatory. The function $f:\mathbb{R}\rightarrow\mathbb{R}$
describes the nonlinear source term. The solution
$u^\eps$ presents one component in a chemical reaction or one species
of a biological system.

The well-posedness of equation~\cref{eqn:semi_elliptic} is
classical. We assume that the permeability $A$ is a symmetric matrix
with $L^{\infty}$-coefficients satisfying the standard coercivity
condition, and that the nonlinear function $f$ is locally Lipschitz
continuous and increasing. Then, assuming the boundary
$\partial\Omega$ is smooth enough, given boundary condition $\phi\in
H^{1/2}(\partial\Omega) \cap L^{\infty}(\partial\Omega)$, the
problem~\cref{eqn:semi_elliptic} has a unique $H^1$-solution
satisfying the maximum principle. We refer to~\cite{GiTr:2015,Ch:2009}
for details.

\subsection{Homogenization limit}
The semilinear elliptic equation~\cref{eqn:semi_elliptic} has a
homogenization limit as $\eps\rightarrow0$. Supposing that $A(x,y)$ is
smooth and periodic in $y$ with period $I = [0,1]^d$, then as
$\eps\rightarrow0$, the solution $u^\eps$ converges to a limit
$u^\ast$ that satisfies the same class of semilinear elliptic
equations with an $\eps$-independent effective permeability $A^\ast(x)
= (a_{ij}^\ast(x))_{d\times d}$:
\begin{equation}\label{eqn:homo_elliptic}
\begin{cases}
-\nabla_x\cdot(A^\ast(x)\nabla_x u^\ast) + f(u^\ast) = 0, \quad &x\in \Omega\,,\\
u^\ast(x) = \phi(x), \quad &x\in \partial\Omega\,.
\end{cases}
\end{equation}
This equation (in particular, the effective permeability $A^\ast(x)$)
can be derived by expanding the equation~\cref{eqn:semi_elliptic} into
different orders of $\eps$. Rigorous proofs are given in
\cite{BeLiPa:2011,BeBoMu:1992,Pa:1999}. We cite the following theorem as a
reference:
\begin{theorem}[Section~16.3 in Chapter~1 of~\cite{BeLiPa:2011}; see
    also~\cite{Al:1992}]\label{thm:bensoussan} Assume the boundary
  $\partial\Omega$ is smooth.  Given $\phi(x)\in
  H^{1/2}(\partial\Omega)\cap L^{\infty}(\partial\Omega)$, let
  $u^\eps$ be the unique solution to the semilinear elliptic
  equation~\cref{eqn:semi_elliptic} in $H^1(\Omega)\cap
  L^{\infty}(\Omega)$.  Assume that the permeability $A(x,y)$ is
  periodic in $y$ with period $I = [0,1]^d$ and that $A(x,\cdot)\in
  C^1(I)$.  Then the solution $u^\eps$ converges weakly in
  $H^1(\Omega)$ as $\eps\rightarrow0$ to $u^\ast$ (the solution
  to~\cref{eqn:homo_elliptic}), where the permeability $A^\ast(x) =
  (a_{ij}^\ast(x))_{d\times d}$ is defined by
\begin{equation}\label{eqn:a_ast}
a_{ij}^\ast(x) = \int_I \sum_{k,l} a_{kl}(x,y)(\delta_{ki}+\partial_{y_k}\chi_i)(\delta_{lj}+\partial_{y_l}\chi_j)\mathrm{d}y\,.
\end{equation}
Here, for each fixed coordinate $j=1,2,\dotsc,d$, the function
$\chi_j(x,y)$ is the solution of the following cell problem with
periodic boundary condition on $I$:
\begin{equation}\label{eqn:chi}
\nabla_y\cdot(A(x,y)\nabla_y(\chi_j(x, y)+y_j)) = 0\,.
\end{equation}
\end{theorem}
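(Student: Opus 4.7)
The plan is to follow the classical two-scale asymptotic expansion strategy of Bensoussan-Lions-Papanicolaou, combined with compactness arguments to pass to the limit in the nonlinear source $f(u^\eps)$.

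First I would establish uniform bounds on $\{u^\eps\}_{\eps>0}$. Testing \cref{eqn:semi_elliptic} against $u^\eps-\Phi$, where $\Phi\in H^1(\Omega)\cap L^\infty(\Omega)$ is any fixed extension of $\phi$, and using the coercivity of $A$, the monotonicity of $f$, and the $L^\infty$ bound from the maximum principle, one obtains $\|u^\eps\|_{H^1(\Omega)}+\|u^\eps\|_{L^\infty(\Omega)}\le C$ uniformly in $\eps$. The Rellich-Kondrachov theorem then extracts a subsequence (still denoted $u^\eps$) with $u^\eps\rightharpoonup u^\ast$ in $H^1(\Omega)$ and $u^\eps\to u^\ast$ strongly in $L^2(\Omega)$.

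Second, I would carry out the formal two-scale ansatz $u^\eps(x)=u_0(x,x/\eps)+\eps u_1(x,x/\eps)+\eps^2 u_2(x,x/\eps)+\cdots$, with each $u_i(x,y)$ periodic in $y\in I$. Substituting with $\nabla_x\mapsto\nabla_x+\eps^{-1}\nabla_y$ and collecting powers of $\eps$: at order $\eps^{-2}$, $-\nabla_y\cdot(A\nabla_y u_0)=0$ forces $u_0=u_0(x)$; at order $\eps^{-1}$, $-\nabla_y\cdot\bigl(A(\nabla_x u_0+\nabla_y u_1)\bigr)=0$ yields, by linearity in $\nabla_x u_0$, the representation $u_1(x,y)=\sum_j \chi_j(x,y)\,\partial_{x_j}u_0(x)$ with $\chi_j$ the cell solution from \cref{eqn:chi}; at order $\eps^0$, integrating over $I$ kills the $\nabla_y$-divergence term (by $y$-periodicity of $u_2$) and substituting $u_1$ gives exactly \cref{eqn:homo_elliptic} with $A^\ast$ as in \cref{eqn:a_ast}.

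Third, I would make this rigorous using two-scale convergence. The uniform $H^1$ bound lets me invoke the Allaire-Nguetseng theorem: along a subsequence, $u^\eps$ two-scale converges to $u^\ast(x)$ (independent of $y$), and $\nabla u^\eps$ two-scale converges to $\nabla_x u^\ast(x)+\nabla_y u_1(x,y)$ for some $u_1\in L^2(\Omega;H^1_{\mathrm{per}}(I)/\mathbb{R})$. Testing \cref{eqn:semi_elliptic} against $\psi(x)+\eps\psi_1(x,x/\eps)$ with $\psi\in C_c^\infty(\Omega)$ and $\psi_1$ smooth and $I$-periodic in $y$, then letting $\eps\to 0$, produces two variational identities: the first is the cell equation that identifies $u_1=\sum_j\chi_j\,\partial_{x_j}u^\ast$, and the second, after eliminating $u_1$, is the weak form of \cref{eqn:homo_elliptic} with coefficient \cref{eqn:a_ast}.

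The main obstacle is passing to the limit in the nonlinear term $f(u^\eps)$, since two-scale convergence alone gives only weak information. This is resolved by combining the strong $L^2$ convergence $u^\eps\to u^\ast$ from Rellich-Kondrachov with the uniform $L^\infty$ bound and the local Lipschitz continuity of $f$: on the bounded set where all $u^\eps$ take values, $|f(u^\eps)-f(u^\ast)|\le L\,|u^\eps-u^\ast|$, giving $f(u^\eps)\to f(u^\ast)$ strongly in $L^2(\Omega)$, which suffices to pass to the limit in the weak form. Uniqueness of the $H^1$ solution of \cref{eqn:homo_elliptic}, which follows from the coercivity of $A^\ast$ together with the monotonicity of $f$, then upgrades subsequential convergence to convergence of the entire family as $\eps\to 0$.
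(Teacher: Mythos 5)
The paper does not prove this theorem at all: it is quoted verbatim as a known result, with the proof deferred to Section~16.3 of Chapter~1 of \cite{BeLiPa:2011} and to \cite{Al:1992}, so there is no in-paper argument to compare yours against. Judged on its own, your sketch is a correct reconstruction of the standard proof in those references: uniform $L^\infty$ bounds via the maximum principle, then the $H^1$ energy bound (note the order matters --- you need the $L^\infty$ bound first so that $f(u^\eps)$ is uniformly bounded and the energy estimate closes), compactness via Rellich--Kondrachov, the formal two-scale ansatz to motivate \cref{eqn:chi} and \cref{eqn:a_ast}, and then two-scale convergence with oscillating test functions $\psi(x)+\eps\psi_1(x,x/\eps)$ to justify the limit rigorously, which is exactly Allaire's route in the cited \cite{Al:1992}. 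You correctly isolate the one genuinely nonlinear difficulty --- passing to the limit in $f(u^\eps)$ --- and resolve it the right way, by combining strong $L^2$ convergence with the uniform $L^\infty$ bound and local Lipschitz continuity of $f$; and you correctly upgrade subsequential to full convergence via uniqueness of the solution of \cref{eqn:homo_elliptic} (coercivity of $A^\ast$ plus monotonicity of $f$). Two minor points you leave implicit but should state: the Dirichlet datum survives the weak limit because $u^\eps-\Phi\in H^1_0(\Omega)$ is preserved under weak $H^1$ convergence, and the admissibility of $A(x,x/\eps)\bigl(\nabla\psi+\nabla_y\psi_1\bigr)$ as a two-scale test function uses the assumed regularity $A(x,\cdot)\in C^1(I)$. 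Neither is a gap in substance.
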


To solve~\cref{eqn:semi_elliptic}, the discretization has to resolve
$\eps$, but in the limit~\cref{eqn:homo_elliptic}, the discretization
is independent of $\eps$. This suggests significant opportunities for
cost savings: The information contained in $O(1/\eps)$ degrees of
freedom can be expressed with $O(1)$ degrees of freedom.

The literature for numerical homogenization is rich, particularly for
the linear setting when $f=0$. Relevant approaches include the
multiscale finite element method
(MsFEM)~\cite{HoWu:1997,EfHoWu:2000,HoWuCa:1999}, the heterogeneous
multiscale method (HMM)~\cite{EWEn:2003,AbSc:2005,EMiZh:2005}, the
generalized finite element method~\cite{BaMe:1997,BaLi:2011},
upscaling based on harmonic coordinates~\cite{OwZh:2007}, elliptic
solvers based on $\mathcal{H}$-matrices~\cite{Be:2007,Ha:2015}, the
reduced basis method~\cite{AbBaVi:2015,AbBa:2012}, the use of
localization~\cite{MaPe:2014}, and the methods based on random
SVD~\cite{ChLiLuWr:2018randomized,ChLiLuWr:2018,ChLiLuWr:2019}, to
name a few. The analytical understanding of the homogenized equation
is essential in the construction of these
methods~\cite{Al:1992}. When randomness presents, one can also look for low dimensional representation of the solutions in the random space~\cite{HOU2017375,LiZhangZhao_dimension_reduction,HouLiZhang_low_dimension,ChEfLeZh:2018cluster}.

The literature for nonlinear problems is not as rich. There are
several works on quasilinear problems, all of which can be seen as
extensions of classical methods, including the
MsFEM~\cite{EfHoGi:2004,ChSa:2008,EfHo:2009}, the
HMM~\cite{EMiZh:2005,AbVi:2014}, the generalized finite element
method~\cite{EfGaLiPr:2014}, the local orthogonal decomposition
method~\cite{HeMaPe:2014}, the reduced basis
method~\cite{AbBaVi:2015} and nonlocal multicontinua upscaling~\cite{ChEfLeWh:2018nonlinear}. These solvers must be designed carefully
for specific nonlinear equations. By contrast, our method makes use of
the low-rankness of the solution sets and could be applied with minor
modification to different equations.

\subsection{Low dimensionality of the tangent space}
We now study the structure of the tangent space of the solution manifold,
verifying in particular the low dimension assumption. We choose some
point $\ol{u}^\eps$ on the solution manifold and then randomly pick a
neighboring solution point $u^\eps$. These two points are solutions
to~\cref{eqn:semi_elliptic} computed from distinct nearby boundary
configurations $\ol{\phi}$ and $\phi$, that is,
\begin{equation}\label{eqn:point_cloud_bdy_elliptic}
\ol{u}^\eps|_{\partial\Omega} = \ol{\phi}\,,\quad u^\eps|_{\partial\Omega} = \phi\,,\quad\text{with}\quad \|\ol{\phi}-\phi\|_{L^{\infty}(\partial\Omega)} = \mathcal{O}(\delta)\,.
\end{equation}
By varying $\phi$ around $\ol{\phi}$, one can build a small point
cloud around $\ol{u}^\eps$.  Denoting $\delta u^\eps := u^\eps - \ol{u}^\eps$,
we have immediately that
\begin{equation}\label{eqn:delta_elliptic}
\begin{cases}
-\nabla_x\cdot\left(A\left(x,\tfrac{x}\eps\right)\nabla_x \delta u^\eps\right) + f(\ol{u}^\eps+\delta u^\eps) - f(\ol{u}^\eps) = 0, \quad &x\in \Omega\,,\\
\delta u^\eps(x) = \phi(x) - \ol{\phi}(x), \quad &x\in \partial\Omega\,.
\end{cases}
\end{equation}
In the small-$\delta$ regime, this collection of solution differences
$\delta u^\eps$ spans the tangent plane. We claim this tangent plane
is low dimensional, so that it inherits the homogenization effect of
the original equation. We have the following result.
\begin{theorem}\label{thm:elliptic_homogenization_point_cloud}
Let $\delta u^\eps$ solve~\cref{eqn:delta_elliptic}. Assume $A(x,y) =
(a_{ij}(x,y))_{d\times d}$ is periodic in $y$ with period $I =
[0,1]^d$. The equation has homogenization limit when $\eps\to0$,
meaning there exists a limiting permeability $A^\ast(x) =
(a_{ij}^\ast(x))_{d\times d}$, determined by $A(x,y)$ via
equation~\cref{eqn:a_ast} and~\cref{eqn:chi}, so that
$\delta u^\eps\to \delta u^\ast$ and $\delta u^\ast$ solves:
\begin{equation}\label{eqn:asymp_elliptic}
\begin{cases}
-\nabla_x\cdot(A^\ast(x)\nabla_x \delta u^\ast) + f(\ol{u}^\ast+\delta u^\ast) - f(\ol{u}^\ast) = 0, \quad &x\in \Omega\,,\\
\delta u^\ast(x)  = \phi(x) - \ol{\phi}(x), \quad &x\in \partial\Omega\,,
\end{cases}
\end{equation}
where $\ol{u}^\ast$ solves:
\begin{equation}\label{eqn:asymp_center_elliptic}
\begin{cases}
-\nabla_x\cdot(A^\ast(x)\nabla_x \ol{u}^\ast) + f(\ol{u}^\ast) = 0, \quad &x\in \Omega\,,\\
u^\ast(x) = \ol{\phi}(x), \quad &x\in \partial\Omega\,.
\end{cases}
\end{equation}
Further, for small $\delta$, equation~\cref{eqn:asymp_elliptic}, in the leading order of $\delta$, becomes:
\begin{equation}\label{eqn:linearized_elliptic}
-\nabla_x\cdot(A^\ast(x)\nabla_x \delta u^\ast) + f'(\ol{u}^\ast(x))\delta u^\ast = 0\,.
\end{equation}
\end{theorem}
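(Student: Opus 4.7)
The plan is to reduce the statement to Theorem \ref{thm:bensoussan} applied twice and then perform an algebraic subtraction, followed by a pointwise Taylor expansion for the final linearization claim. First I would observe that $\ol{u}^\eps$ and $u^\eps$ each individually solve \eqref{eqn:semi_elliptic} with boundary data $\ol\phi$ and $\phi$ respectively. Since both boundary data lie in $H^{1/2}(\partial\Omega)\cap L^\infty(\partial\Omega)$ and the permeability $A(x,y)$ is periodic in $y$ with period $I$ and $C^1$ in $y$, Theorem \ref{thm:bensoussan} applies to each problem and yields $\ol{u}^\eps \rightharpoonup \ol{u}^\ast$ and $u^\eps \rightharpoonup u^\ast$ in $H^1(\Omega)$, where $\ol{u}^\ast$ and $u^\ast$ both solve the homogenized equation \eqref{eqn:homo_elliptic} with the same effective permeability $A^\ast$ given by \eqref{eqn:a_ast}--\eqref{eqn:chi} but with boundary data $\ol\phi$ and $\phi$. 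This immediately gives \eqref{eqn:asymp_center_elliptic}.

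Next, defining $\delta u^\ast := u^\ast - \ol{u}^\ast$, linearity of weak limits gives $\delta u^\eps = u^\eps - \ol{u}^\eps \rightharpoonup \delta u^\ast$ in $H^1(\Omega)$. I would verify that $\delta u^\ast$ solves \eqref{eqn:asymp_elliptic} by subtracting the homogenized equation for $\ol{u}^\ast$ from that for $u^\ast$: the diffusion part is linear and produces $-\nabla_x\cdot(A^\ast(x)\nabla_x\delta u^\ast)$, while the semilinear term gives $f(\ol{u}^\ast+\delta u^\ast)-f(\ol{u}^\ast)$ identically. The boundary condition $\delta u^\ast = \phi - \ol\phi$ follows from the traces of $u^\ast$ and $\ol{u}^\ast$. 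This establishes \eqref{eqn:asymp_elliptic}.

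For the final claim, I would Taylor expand $f$ pointwise around $\ol{u}^\ast(x)$ to get
\[
f(\ol{u}^\ast+\delta u^\ast)-f(\ol{u}^\ast) = f'(\ol{u}^\ast)\,\delta u^\ast + O\bigl((\delta u^\ast)^2\bigr).
\]
Since the Dirichlet data of $\delta u^\ast$ has magnitude $O(\delta)$, standard elliptic energy or maximum-principle estimates applied to \eqref{eqn:asymp_elliptic}---using that $f$ is monotone increasing so $f'\ge 0$, which ensures coercivity of the linearized operator $-\nabla\cdot(A^\ast\nabla\cdot)+f'(\ol{u}^\ast)\cdot$---give $\|\delta u^\ast\|_{H^1}=O(\delta)$. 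Substituting the expansion and discarding the $O(\delta^2)$ remainder produces \eqref{eqn:linearized_elliptic}.

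The main obstacle, if any, is controlling the Taylor remainder uniformly: one needs $f$ to be $C^{1,1}$-like on the relevant range of values. This is guaranteed under the standing hypotheses, because the maximum-principle bound cited in the well-posedness discussion provides a uniform $L^\infty$ bound on $\ol{u}^\ast+\delta u^\ast$, on which the local Lipschitz continuity of $f'$ (inherited from the local Lipschitz assumption on $f$, strengthened as needed) bounds the remainder. Everything else---the two applications of Theorem \ref{thm:bensoussan}, the linearity of weak limits, and the PDE subtraction---is direct.
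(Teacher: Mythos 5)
Your proposal takes essentially the same route as the paper: apply Theorem~\ref{thm:bensoussan} twice (to $\ol{u}^\eps$ with data $\ol\phi$ and to $u^\eps$ with data $\phi$), pass to weak limits, and subtract the homogenized equations to obtain \eqref{eqn:asymp_elliptic}, using linearity of the diffusion operator so that only the zeroth-order term becomes $f(\ol{u}^\ast+\delta u^\ast)-f(\ol{u}^\ast)$. The one place you go beyond the paper is worth noting: the paper's proof stops after establishing \eqref{eqn:asymp_elliptic} and says nothing about the linearization \eqref{eqn:linearized_elliptic}, whereas you supply the missing justification (Taylor expansion of $f$ about $\ol{u}^\ast$ plus an $O(\delta)$ bound on $\delta u^\ast$ via monotonicity/maximum principle, which controls the quadratic remainder) --- a genuine improvement in completeness, though the paper treats that last displayed equation as a formal ``leading order'' statement rather than something to be proved.
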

\begin{proof}
By applying~\Cref{thm:bensoussan} to the equation for $\ol{u}^\eps$, which is
\[
\begin{cases}
-\nabla_x\cdot\left(A\left(x,\tfrac{x}\eps\right)\nabla_x \delta \ol{u}^\eps\right) + f(\ol{u}^\eps) = 0, \quad &x\in \Omega\,,\\
\delta \ol{u}^\eps(x) = \ol{\phi}(x), \quad &x\in \partial\Omega\,,
\end{cases}
\]
we have by comparing with equation~\cref{eqn:semi_elliptic} for
$u^\eps$ that $\ol{u}^\eps$ converges weakly to $\ol{u}^\ast$, which
solves~\cref{eqn:asymp_center_elliptic}, and that $u^\eps$ converges
weakly to $u^\ast$, which solves~\cref{eqn:homo_elliptic}. From the
definition $\delta u^{\eps} = u^\eps- \ol{u}^\eps$, we find that
$\delta u^\eps$ converges to $\delta u^\ast$, which
solves~\cref{eqn:asymp_elliptic}.
\end{proof}

This theorem suggests that for the discretized equation, because of
the existence of the homogenized limit, the tangent plane of the
discrete solution is approximately low-rank.  The space spanned by
$\{\delta u^\eps\}$ can be approximately spanned by $\{\delta
u^\ast\}$, which solves the limiting
equation~\cref{eqn:asymp_elliptic} without dependence on small scales.

\subsection{Implementation}
We apply Algorithm~\ref{alg:general} to equation
\eqref{eqn:semi_elliptic} with $f(u) = u^3$ and
$\Omega=[0,L]^2\subset\mathbb{R}^2$, that is,
\begin{equation}\label{eqn:elliptic_2d}
\begin{cases}
-\nabla_x\cdot\left(a\left(x,\tfrac{x}\eps\right)\nabla_x u\right) + u^3 = 0\,, \quad &x\in\Omega= [0,L]^2, \\
u(x) = \phi(x), \quad &x\in \partial\Omega.
\end{cases}
\end{equation}
We use the domain decomposition strategy of
Section~\ref{sec:algorithm} to solve this system. Since $\Omega$ is
convex and the coefficient $a(x,x/\eps)$ belongs to
$L^{\infty}(\Omega)$, we can show using the monotone
method~\cite{AmMo:1971,Ch:2009} that the equation is well-posed,
having a unique solution if we set
\[
\mathcal{X} = H^{1/2}(\partial\Omega)\cap L^{\infty}(\partial\Omega), \quad  \mathcal{Y} = H^1(\Omega)\cap L^{\infty}(\Omega)\,.
\]

In the offline stage, we generate $N$ samples for each enlarged patch
$\wt{\Omega}_m$, as follows:
\[
\wt{\phi}_{m,i}\in B(R_m;\,\wt{\mathcal{X}}_m)\,,\quad i = 1,\dots,N\,.
\]
(The measure we use for sampling is discussed in~\Cref{app:elliptic}.)
We equip the ball with $H^{1/2}$-norm:
\begin{equation}
B(R_m;\,\wt{\mathcal{X}}_m) = \{\wt{\phi}\in\wt{\mathcal{X}}_m: \|\wt{\phi}\|_{H^{1/2}(\partial\Omega_m)}\leq R_m\}\,.
\end{equation}
We compute the $H^{1/2}(\partial\Omega)$-norm numerically using the
Gagliardo seminorm~\cite{DiPaVa:2012}:
\[
\|\phi\|_{H^{1/2}(\partial\Omega)} = \sqrt{\int_{\partial\Omega}|\phi(x)|^2\mathrm{d}x + \iint_{\partial\Omega\times\partial\Omega}\frac{|\phi(x)-\phi(y)|^2}{|x-y|^2}\mathrm{d}x\mathrm{d}y}\,.
\]
For these boundary configurations, we solve the equation
\begin{equation}\label{eqn:patch_learn_elliptic}
\begin{cases}
-\nabla_x\cdot\left(a\left(x,\tfrac{x}\eps\right)\nabla_x \wt{u}_{m,i}\right) + \wt{u}_{m,i}^3 = 0\,,&\quad x\in \wt{\Omega}_m\,, \\
\wt{u}_{m,i}(x) = \wt{\phi}_{m,i}(x)\,,&\quad x\in\partial\wt{\Omega}_m\,,
\end{cases}
\end{equation}
and build two sets of dictionaries by confining the solutions in the
interior $\Omega_m$ and the boundary $\partial\Omega_{m}$, as follows:
\begin{equation} \label{eq:dics}
\mathscr{I}_m =
\{\psi_{m,i}=\wt{u}_{m,i}|_{\Omega_m}\}_{i=1}^N,\quad \mathscr{B}_m
= \{\phi_{m,i}=\wt{u}_{m,i}|_{\partial\Omega_m}\}_{i=1}^N\,.
\end{equation}

In the online stage, local boundary conditions are updated according
to~\cref{eqn:general_approx_soln_op} at each iteration, with
coefficients computed from~\cref{eqn:coef_general}. The local tangent
space is found by searching for the $k$ nearest neighbors in the
dictionary $\mathscr{B}_m$, mapped to the dictionary $\mathscr{I}_m$
(see \eqref{eq:dics}). We use the $L^2$ norm to measure the distance between the newly generated solutions and the older solution set.

Once a preset error tolerance is achieved (at step $n$, say), the
global solution is patched up from the local pieces, as follows:
\begin{equation}\label{eqn:elliptic_global_soln}
u^{(n)} = \sum_{m\in J} \chi_m u_m^{(n)}\,,
\end{equation}
where $u_m^{(n)}$ is the local solution on $\Omega_m$ at the $n$-th
step and $\chi_m:\Omega\rightarrow\mathbb{R}$ is a smooth partition of
unity.

\subsection{Numerical Tests}~\label{sec:num_elliptic}
We present numerical results for \eqref{eqn:elliptic_2d} in this
subsection. We use $L=1$, yielding the domain $\Omega = [0,1]^2$, and
define the oscillatory media as follows:
\[
a(x,y,x/\eps,y/\eps) = 2+\sin(2\pi x)\cos(2\pi y)+\frac{2+1.8\sin(2\pi
  x/\eps)}{2+1.8\cos(2\pi y / \eps)} + \frac{2+\sin(2\pi y
  /\eps)}{2+1.8\cos(2\pi x/\eps)}\,.
\]
The  boundary condition is
\begin{alignat*}{2}
\phi(x,0) &= -\sin(2\pi x)\,,\quad & \phi(x,1) & = \sin(2\pi x)\,,\\
\phi(0,y) &= \sin(2\pi y)\,,\quad & \phi(1,y) & = -\sin(2\pi y)\,.
\end{alignat*}

To form the partitioning, the whole domain $\Omega$ is divided equally
into $4\times4$ non-overlapping squares, and then each square is
enlarged by $\Delta x_{\text{o}} = .0625$ on the sides that do not intersect
with $\partial\Omega$, to create overlap. We thus have $M_1 = M_2 =
4$, with $\Omega_m$ for $m=(m_1,m_2)$, $m_1=1,2,3,4$ and
$m_2=1,2,3,4$, defined by
\begin{align*}
\Omega_m & =\left[\max\left(\tfrac{m_1-1}{M_1}-\Delta x_{\text{o}},0\right),\min\left(\tfrac{m_1}{M_1}+\Delta x_{\text{o}},1\right)\right] \\
& \quad \times \left[\max\left(\tfrac{m_2-1}{M_2}-\Delta x_{\text{o}},0\right),\min\left(\tfrac{m_2}{M_2}+\Delta x_{\text{o}},1\right)\right]\,,\quad m = (m_1,m_2) \in J\,.
\end{align*}

Denote $\Omega_m = [x_m^{(1)},x_m^{(2)}]\times[y_m^{(1)},y_m^{(2)}]$. The partition of unity function $\chi_m$ is defined by normalizing the bump functions on the overlapping domains. More precisely, we first define a bump function $f_m:\Omega\to\mathbb{R}$ supported on $\Omega_m$ as follows:
\[
f_m(x,y) =
\begin{cases}
\exp\left( -\frac{1}{1-|x-x_m|/\alpha_m} -\frac{1}{1-|y-y_m|/\beta_m} \right) \,, \quad (x,y)\in\Omega_m\\
0 \,, \quad \text{Otherwise}
\end{cases}\,,
\]
where $x_m = \frac{x_m^{(2)}-x_m^{(1)}}{2}$, $y_m = \frac{y_m^{(2)}-y_m^{(1)}}{2}$, $\alpha_m = \frac{x_m^{(1)}+x_m^{(2)}}{2}$ and $\beta_m = \frac{y_m^{(1)}+y_m^{(2)}}{2}$. The partition of unity $\chi_m:\Omega\to\mathbb{R}$ is then obtained by
\[
\chi_m(x,y) = \frac{f_m(x,y)}{\sum_{m\in J} f_m(x,y)} \,.
\]

A standard finite-volume scheme with uniform grid is used for
discretization, the corresponding nonlinear discrete system being
solved by Newton's method. The reference solutions are computed on the
fine mesh with $h = 2^{-12} = \tfrac{1}{4096}$. Unless otherwise
specified, other computations are performed with mesh size $h =
2^{-9} = \tfrac{1}{512}$.  Denoting the numerical solution by $u_{ij}
\approx u(x_i,y_j)$, we use the classical discrete $L^2$ norm
\[
\|u\|_{L^2} =  h \sqrt{ \sum_{i,j=0}^{p} |u_{ij}|^2 }\,,
\]
and the energy norm
\[
\|u\|_{\mathcal{E}} = h \sqrt{ \sum_{j=0}^{p}\sum_{i=0}^{p-1} a_{i+1/2,j} \left| \frac{u_{i+1,j}-u_{ij}}{h} \right|^2 + \sum_{i=0}^{p}\sum_{j=0}^{p-1} a_{i,j+1/2} \left| \frac{u_{i,j+1}-u_{ij}}{h} \right|^2 } \,,
\]
and define the relative errors accordingly by
\[
\mbox{\rm relative $L^2$ error} = \frac{\|u_{\text{ref}}-u_{\text{approx}}\|_{L^2}}{\|u_{\text{ref}}\|_{L^2}}\,, \quad
\mbox{\rm relative energy error} = \frac{\|u_{\text{ref}}-u_{\text{approx}}\|_{\mathcal{E}}}{\|u_{\text{ref}}\|_{\mathcal{E}}}.
\]

We first describe numerical experience with the offline stage. Each
interior patch $\Omega_m$ is enlarged by a margin $\Delta
x_{\text{b}}$ to damp the boundary effects. The resulting buffered
patch $\wt{\Omega}_m$ is concentric with $\Omega_m$; see
\Cref{fig:elliptic_num_setting}. In the plots shown below, we study
the patch indexed by $m = (2,2)$.

\begin{figure}[htbp]
  \centering
  \includegraphics[width=0.5\textwidth]{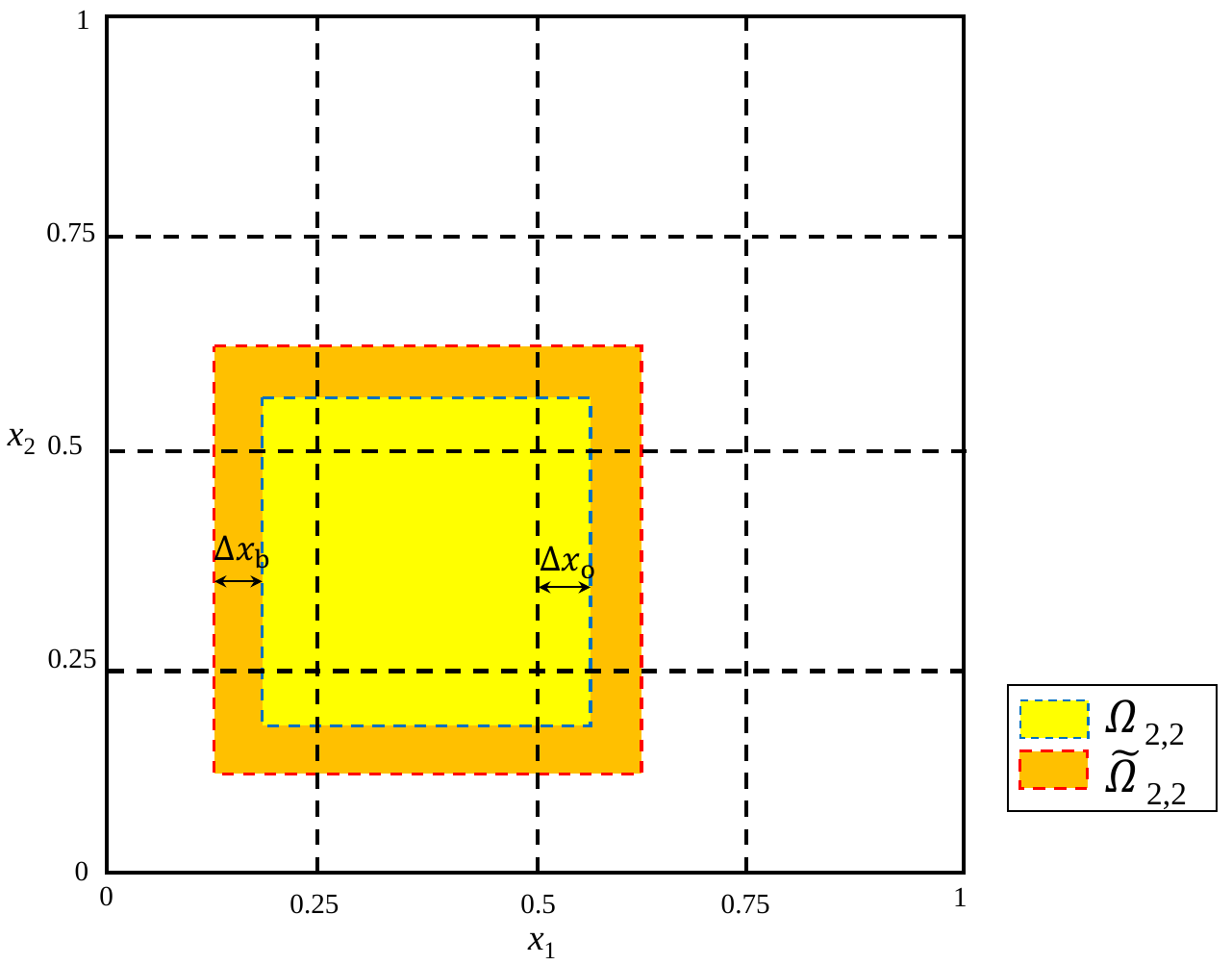}
  \caption{Buffered domain decomposition.}
  \label{fig:elliptic_num_setting}
\end{figure}

To build the local dictionary, we generate $64$ samples randomly in
$B(R_{2,2},\wt{X}_{2,2})$, where $R_{2,2} = 20$. (The sampling scheme
is discussed in~\Cref{app:elliptic}.) We compute the local solutions
with these boundary conditions on $\wt{\Omega}_{2,2}$, for several
choices of buffer size $\Delta x_{\text{b}}$, and subtract the
solutions from the reference solution, confined to
$\Omega_{2,2}$. This procedure forms the tangent space centered around
the reference solution in this particular patch. In
\Cref{fig:elliptic_tan_vec_svd} we plot the singular value decay of
this tangent space, for $\eps = 2^{-4}$. It is clear that the singular
values decay exponentially, with a larger buffer margin $\Delta
x_{\text{b}}$ leading to a faster decay rate. This observation
suggests that the tangent space is approximately low dimensional. We
then project the reference solution onto the space spanned by its
closest neighbors. As the number of neighbors increases, the relative
error decays exponentially, as seen
in~\cref{fig:elliptic_patch_proj}. When the buffer margin is $\Delta
x_{\text{b}} = 2^{-4}$, we achieve $99\%$ accuracy with $30$
neighbors. By comparison, the degrees of freedom for this patch is
determined by the total number of grid points on the boundary of this
patch --- 768, in this particular case.

\begin{figure}[htbp]
  \centering
  \subfloat[]{
  \includegraphics[width=0.47\textwidth]{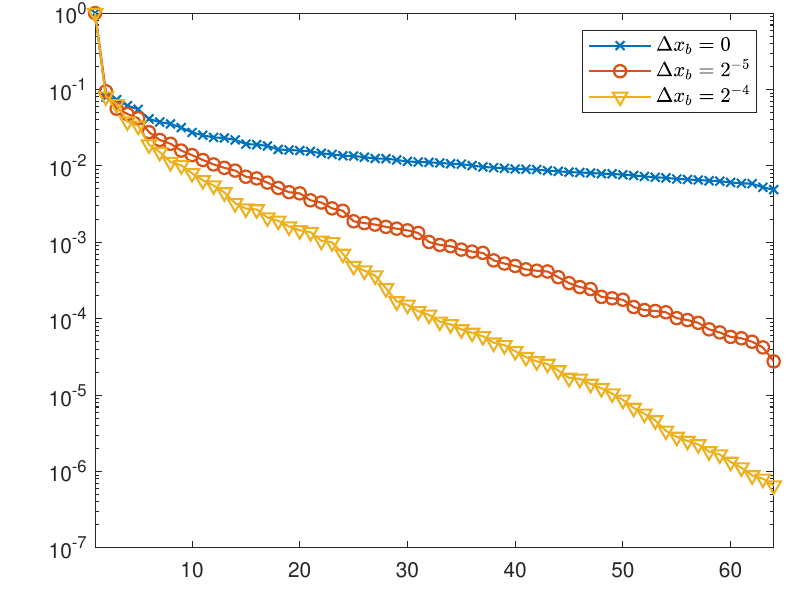} \label{fig:elliptic_tan_vec_svd}
  }
  \subfloat[]{
  \includegraphics[width=0.47\textwidth]{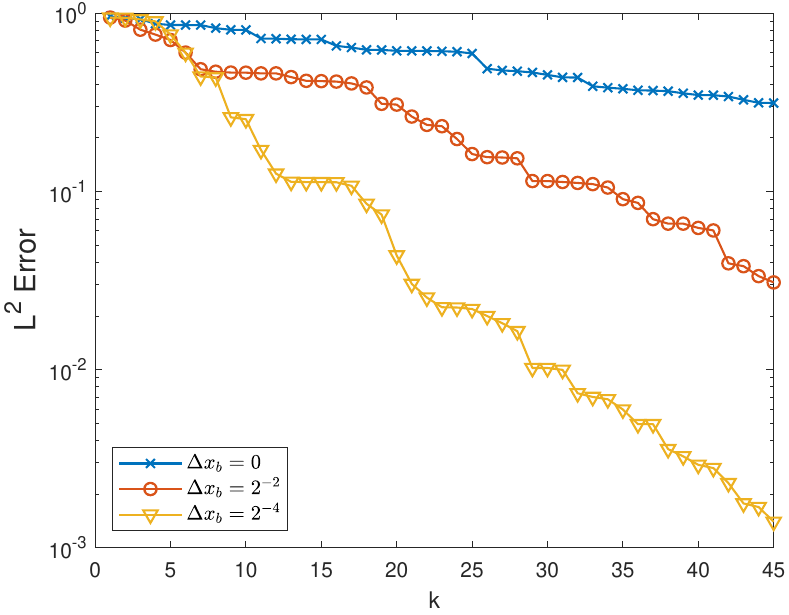} \label{fig:elliptic_patch_proj}
  }
  \caption{(a) The singular value decay of the tangent space (centered
    around the reference solution) on patch $\Omega_{2,2}$, for
    different values of the buffer margin $\Delta x_{\text{b}}$. (b)
    The relative error of the projection of reference solution onto
    the space spanned by the nearest $k$ neighbors on
    $\Omega_{2,2}$. The distance is measured in
    $L^2(\Omega_{2,2})$. $\eps = 2^{-4}$ in both plots.}
\end{figure}

In the online stage, we set the stopping criterion to be
\[
\sum_{m}\|\phi_{m}^{(n)}-\phi_{m}^{(n-1)}\|_{L^2(\partial\Omega_m)}<10^{-5}\,,
\]
where the upper index $(n)$ indicates the evaluation of the solution
in the $n$-th iteration on $\mathcal{X}_m$, which is the boundary of
$\Omega_m$. The initial guess for all local boundary condition is
chosen (trivially) to be
$\phi_{m}^{(0)}|_{\partial\Omega_{m}\backslash\partial\Omega} = 0$ and
$\phi_{m}^{(0)}|_{\partial\Omega_{m}\cap\partial\Omega} =
\phi|_{\partial\Omega_{m}\cap\partial\Omega}$.

\begin{figure}[htbp]
  \centering
  \includegraphics[width=0.32\textwidth]{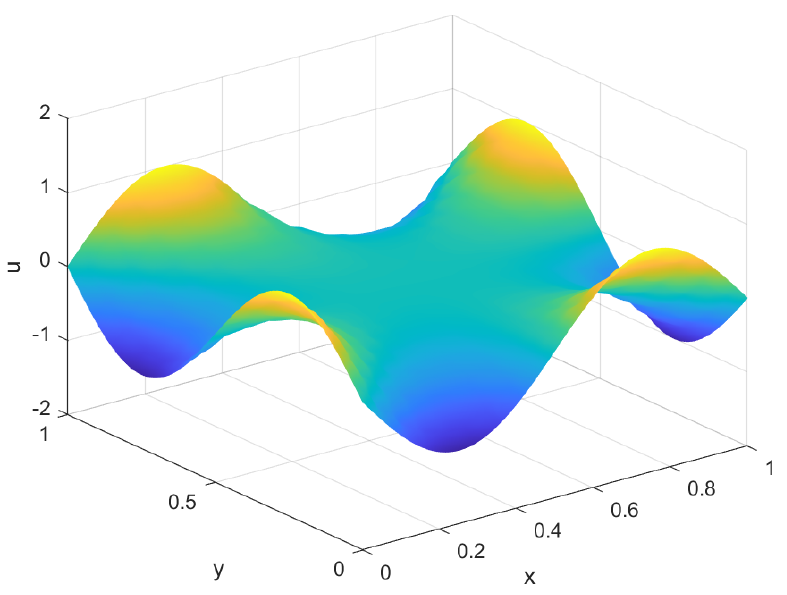}
  \includegraphics[width=0.32\textwidth]{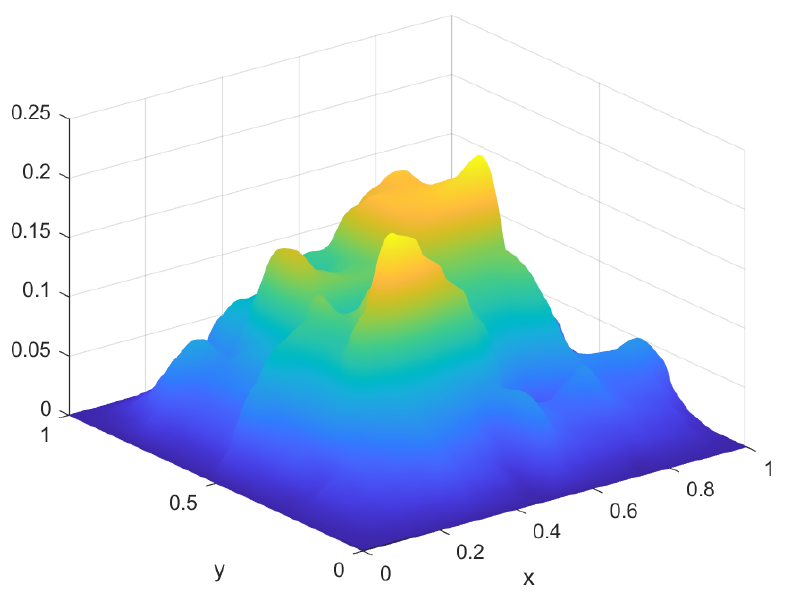}
  \includegraphics[width=0.32\textwidth]{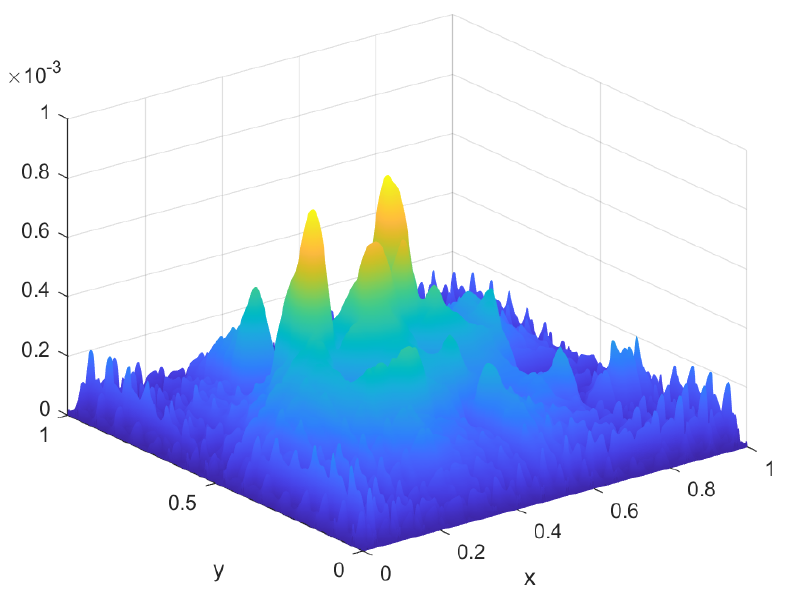}
  \caption{Computed solutions. Left panel shows the reference solution
    obtained with fine grids of width $h = 2^{-12}$. Middle and
    right panels show the numerical error $|u-u_{\text{ref}}|$ obtained
    with $k = 5$ and $k = 30$, respectively.}
  \label{fig:ref_approx_elliptic}
\end{figure}

In \Cref{fig:ref_approx_elliptic}, we compare the numerical solutions
using the space spanned by $k=5$ and $k=40$ nearest neighbors. The
buffer margin is $\Delta x_{\text{b}} = 2^{-4}$, and we set $\eps =
2^{-4}$.  We also document the error behavior as a function of $k$,
$\eps$, and $\Delta x_{\text{b}}$. In \Cref{fig:elliptic_error_eps},
we plot the error decay as a function of $k$ (the number of neighbors
used in the online stage) for different values of $\eps$ and $\Delta
x_{\text{b}}$. The decay is independent of $\eps$, indicating the rank
structure is not influenced by small scales in the equation. As the
number of neighbors $k$ increases, the global relative $L^2$ and energy error
decays exponentially provided a buffer zone is present.  When $\Delta
x_{\text{b}}=0$ (no buffer), the boundary layer effect is strong, and
convergence is not obtained, meaning that the local solution cannot be
well approximated from the dictionary.

\begin{figure}[htbp]
  \centering
  \includegraphics[width=0.3\textwidth]{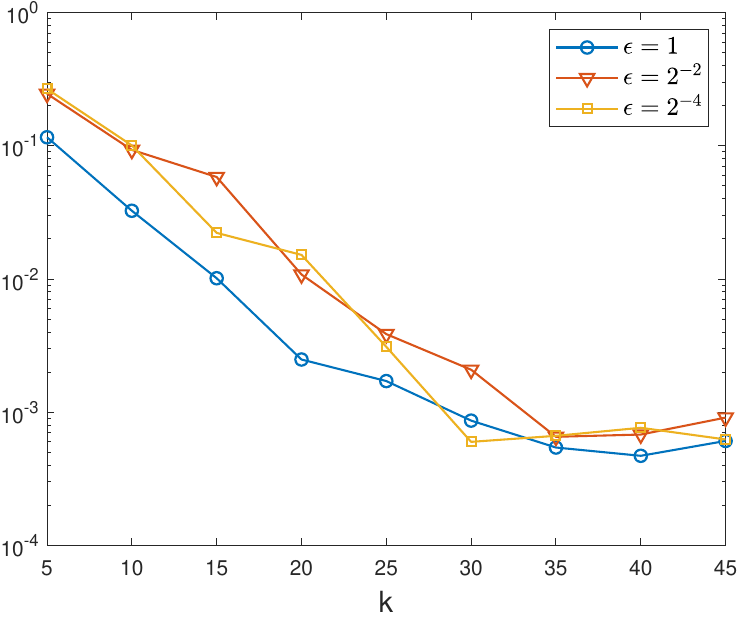}
  \includegraphics[width=0.3\textwidth]{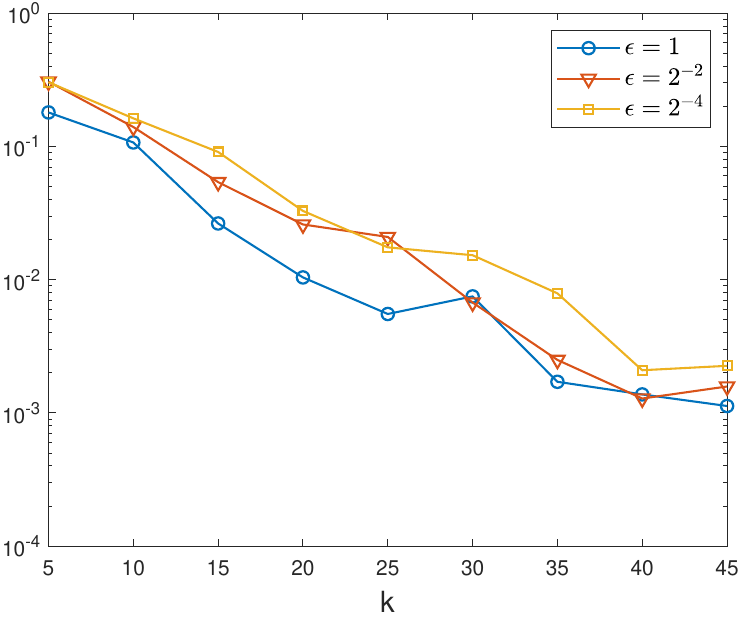}
  \includegraphics[width=0.3\textwidth]{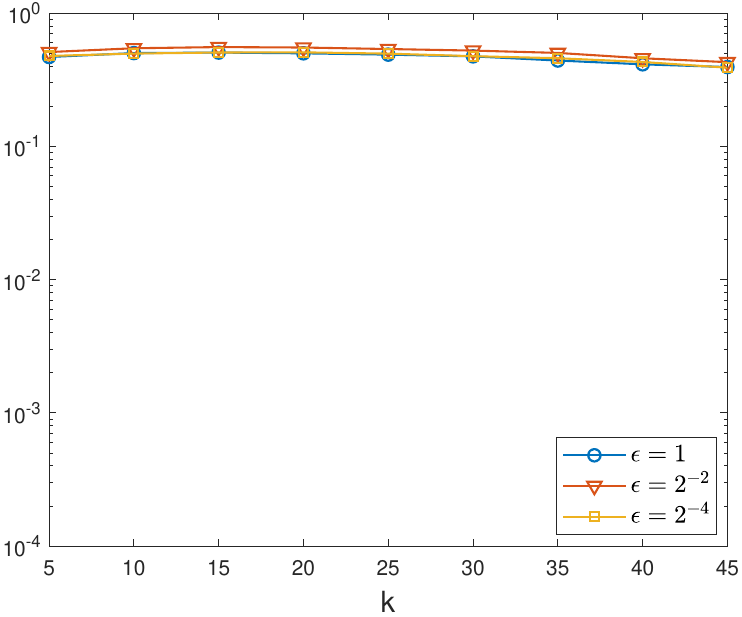}\\
  \includegraphics[width=0.3\textwidth]{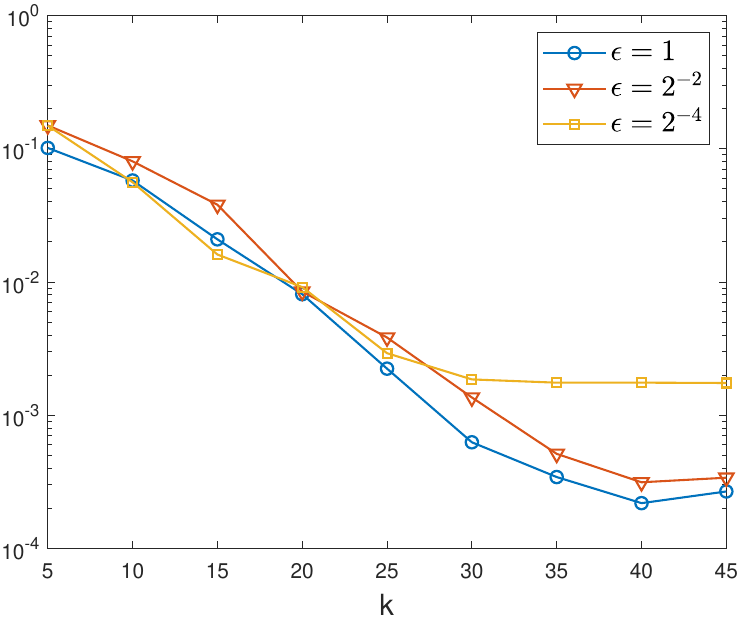}
  \includegraphics[width=0.3\textwidth]{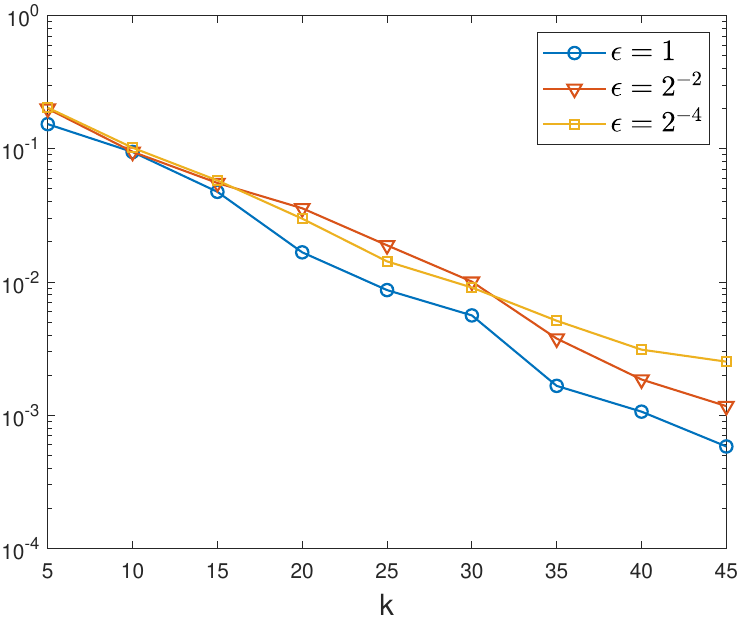}
  \includegraphics[width=0.3\textwidth]{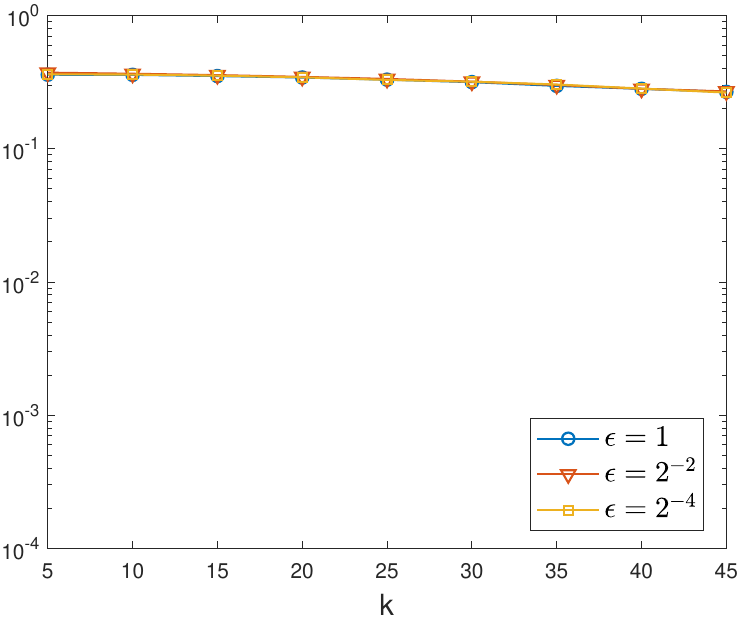}
  \caption{The top row of plots shows the global $L^2$ error as a function of $k$ with different $\eps$ and buffer zone size $\Delta x_{\text{b}}$. The bottom row of plots shows the global energy error. The three columns of plots represent  $\Delta x_{\text{b}}=2^{-4},2^{-5},0$,  respectively.}\label{fig:elliptic_error_eps}
\end{figure}

We show CPU times in Table~\ref{tbl:time_elliptic}, comparing the
reduced model for different values of $k$ with the classical Schwarz
iteration, for $\eps = 2^{-4}$ and $\Delta x_{\text{b}}=2^{-4}$. The
same stopping criterion is used for all variants. The online stage of
each reduced model is significantly faster than the classical Schwarz
iteration. Even with $k=40$ neighbors involved in the local solution
reconstruction, our method requires $1.12$s, compared to $187.8$s
required by the classical Schwarz method. While the offline
preparation is expensive in general, it is still cheaper in this
example than the classical Schwarz iteration for solving a single
problem. Because the dictionary can be reused, our method has a strong
advantage in situations where many solutions corresponding to
different boundary conditions are needed. {This is a typical situation
  in inverse problems, where to determine the unknown media, many
  boundary configurations are imposed and numerical solutions are
  computed to compare with measurements~\cite{ChLiLi:2018}.}

\begin{table}
	\centering
	\begin{tabular}{ l | c | c }
		\hline \hline
							& \multicolumn{2}{c}{CPU Time (s) ($\eps = 2^{-4}$)}\\
		\hline
		& offline & online \\
		\hline
		Reduced model $k=5$  	& 135.6914  &	0.173712		 \\
		Reduced model $k=10$	& 		    &	0.305707	     \\
		Reduced model $k=20$	& 	        &	0.462857		 \\
		Reduced model $k=30$	&    	    &	0.696785		 \\
		Reduced model $k=40$	& 	        &	1.124082		 \\
		\hline
		Classical Schwarz       & |         & 187.7705           \\
		\hline\hline
	\end{tabular}
    \caption{CPU time comparison between our reduced method with $k = 5,10,20,30,40$ and classical Schwarz method.}
  \label{tbl:time_elliptic}
\end{table}

\section{Example 2: Nonlinear radiative transfer equation}\label{sec:LowRank_RTE}

Here we study the application of Algorithm~\ref{alg:general} to a
nonlinear radiative transfer equation. Radiative transfer is the
physical phenomenon of energy transfer in the form of electromagnetic
radiation, and the radiative transfer equations describe the
absorption or scattering of radiation as it propagates through a
medium. The equations are important in optics, astrophysics,
atmospheric science, remote sensing~\cite{Mo:2013}, and other
applications.

We denote by $I^\eps(x,v)$ the distribution function of photon
particles at location $x$ moving with velocity $v$ in the physical
domain $\mathcal{D}\subset\Rbb^3$ and the velocity domain $\mathcal{V}
= \mathbb{S}^{2}$. Also denote by $T^\eps(x)$ the temperature profile
across domain $\mathcal{D}$. We consider a nonlinear system of
equations that couples the photon particle distribution with the
temperature profile.  The steady state equations are
\begin{equation}\label{eqn:nrte}
\begin{cases}
\eps v \cdot \nabla_x I^\eps = B(T^\eps) - I^\eps,\;\;\text{for } (x,v)\in \Kcal = \mathcal{D}\times \mathcal{V}, \\
\eps^2 \Delta_x T^\eps = B(T^\eps) - \langle I^\eps\rangle,\;\; \text{for } x\in \mathcal{D}\,,
\end{cases}
\end{equation}
with the velocity-averaged intensity given by
\begin{equation} \label{eq:sigvai}
\langle I\rangle(x) = \int_{\mathcal{V}}I(x,v)\mathrm{d}\mu(v)\,.
\end{equation}
Here, $\mu(v)$ is a normalized uniform measure on $\mathcal{V}$ and
$B(T)$ is a nonlinear function of $T$, typically defined as
\begin{equation} \label{eq:defB}
  B(T) = \sigma T^4,
\end{equation}
where $\sigma$ is a scattering
coefficient~\cite{KlSi:1998,ViAn:1975}. The parameter $\eps$ is called
the Knudsen number, standing for the ratio of the mean free path and
the typical domain length. When the medium is highly scattering and
optically thick, the mean free path is small, with $\eps\ll 1$. The
scattering coefficient $\sigma$ is independent of $\eps$.

We consider a slab geometry. Assuming the $y$ and $z$ directions to be
homogeneous, then since
$v=(\cos\theta,\sin\theta\sin\varphi,\sin\theta\cos\varphi)$, the
$v_x$ component becomes $\cos\theta\in[-1,1]$. The problem is
simplified to:
\begin{equation}\label{eqn:nrte_1d}
\begin{cases}
\eps v \partial_x I^\eps = B(T^\eps) - I^\eps\\
\eps^2 \partial_x^2 T^\eps = B(T^\eps) - \langle I^\eps\rangle
\end{cases}
,\quad (x,v)\in \Kcal = [a,b]\times [-1,1]\,,
\end{equation}
with $\langle I\rangle(x) = \frac{1}{2}\int_{-1}^1 I(x,v)\mathrm{d}v$.

We provide incoming boundary conditions that specify the distribution
of photons entering the domain. The boundary condition itself has no
$\eps$ dependence; we have
\begin{equation}\label{eqn:boundary_rte}
I^\eps(x,v) = I_b(x,v) \;\; \text{ on } \Gamma_-\,,  \quad T^\eps(x) = T_b(x) \;\; \text{ on } \partial\mathcal{D}\,.
\end{equation}
Here $\Gamma_\pm$ collect the coordinates at the boundary with velocity pointing into or out of the domain:
\[
\Gamma_\pm = \{(x,v): x\in\partial\mathcal{D}\,, \pm v\cdot n_x>0\}\,,
\]
and $n_x$ denotes the unit outer normal vector at
$x\in\partial\Omega$.

\subsection{Homogenization limit}
The equations~\cref{eqn:nrte} have a homogenization limit. As
$\eps\to0$, the right hand side of the equations dominates, and by
balancing the scales we obtain
\[
I^\eps\sim\langle I^\eps\rangle\sim\sigma (T^\eps)^4 \sim I^\ast\sim \sigma (T^\ast)^4\,.
\]
To find the equation satisfied by $T^\ast$, we expand the
equations~\cref{eqn:nrte} up to second order in $\eps$. Rigorous
results are shown
in~\cite{LiSu:2019,KlSc:2001,BaGoPe:1987,BaGoPeSe:1988}.
We cite
the following theorem captures the results needed here.
\begin{theorem}[Modification of Theorem~3.2 in \cite{KlSc:2001}]\label{thm:klar}
   Let $\mathcal{D}\subset\Rbb^3$ be bounded and $\partial\mathcal{D}$
   be smooth.  Assuming that the boundary
   conditions~\cref{eqn:boundary_rte} are positive and that $T_b\in
   H^{1/2}(\partial\mathcal{D})\cap L^\infty(\partial\mathcal{D})$ and
   $I_b\in L^{\infty}(\Gamma_-)$, then the nonlinear radiative
   transfer equation~\cref{eqn:nrte} has a unique positive solution
   $(I^\eps,T^\eps)\in L^\infty(\mathcal{K})\times
   L^\infty(\mathcal{D})$.  If we assume further that
   $(I_b,T_b)\geq\gamma>0$ and $I_b = B(T_b)$ a.e. on $\Gamma_-$, then
   the solution in the limit as $\eps\rightarrow 0$ converges weakly
   to $(B(T^\ast),T^\ast)$, where the limiting temperature $T^\ast$ is
   the unique positive solution to the following PDE:
  \begin{equation}\label{eqn:t_nrte}
  \Delta_x (T^\ast + B(T^\ast)/3) = 0\,, \quad \text{for} \quad x\in\mathcal{D}\,,
  \end{equation}
  equipped with Dirichlet boundary data $T^\ast|_{\partial\mathcal{D}}
  = T_b$. The convergence of $T^\eps$ is in $H^1(\mathcal{D})$ weak
  and the convergence of $I^\eps$ is in $L^\infty(\mathcal{K})$
  weak-$\ast$.
\end{theorem}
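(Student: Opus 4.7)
The plan is to combine sub/super solution arguments (for existence, uniqueness, and uniform $L^\infty$ bounds) with a Hilbert expansion (to identify the diffusion limit), closed by a weak-compactness argument to make the passage to the limit rigorous. For each fixed $\eps>0$ I would first establish existence, uniqueness, and uniform-in-$\eps$ $L^\infty$ bounds. Given positive boundary data with $0<\gamma\le I_b,T_b\le M$, constant pairs $(B(T_\pm),T_\pm)$ with $T_-\le T_b\le T_+$ and $B(T_-)\le I_b\le B(T_+)$ on $\Gamma_-$ form sub- and super-solutions for the coupled system. A monotone iteration, alternately solving the linear transport equation for $I^\eps$ with $T^\eps$ frozen (by characteristics) and the semilinear elliptic equation for $T^\eps$ with $\langle I^\eps\rangle$ frozen, produces a solution trapped between these bounds. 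Uniqueness and comparison follow from the monotonicity $B'(T)=4\sigma T^3\ge 0$ via a maximum-principle argument on the difference of two solutions.

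Second, I would carry out the formal Hilbert expansion $I^\eps=I_0+\eps I_1+\eps^2 I_2+\dots$ and $T^\eps=T_0+\eps T_1+\eps^2 T_2+\dots$. The order-$\eps^0$ transport equation gives $I_0=B(T_0)$; the order-$\eps^1$ equation gives $I_1=B'(T_0)T_1-B'(T_0)\,v\cdot\nabla_x T_0$, hence $\langle I_1\rangle=B'(T_0)T_1$ by $\langle v\rangle=0$. The diffusion equation is automatically satisfied at orders $\eps^0$ and $\eps^1$. At order $\eps^2$, solving the transport equation for $I_2$ and averaging with the moment identity $\langle v_iv_j\rangle=\tfrac13\delta_{ij}$ yields $\langle I_2\rangle=B'(T_0)T_2+\tfrac12 B''(T_0)T_1^2+\tfrac13\Delta_x B(T_0)$. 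Substituting into the order-$\eps^2$ diffusion equation, the $T_2$ and $T_1^2$ terms cancel and one is left with
\begin{equation*}
\Delta_x\bigl(T_0+\tfrac13 B(T_0)\bigr)=0,
\end{equation*}
i.e., the claimed equation with $T^\ast=T_0$. The compatibility condition $I_b=B(T_b)$ on $\Gamma_-$ ensures this leading-order ansatz is consistent with the prescribed inflow.

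Third, I would upgrade the formal expansion to a rigorous weak limit. The uniform $L^\infty$ bound, combined with uniform $H^1$ control on $T^\eps$ obtained by testing the second equation against $T^\eps-T_b$ (the coupling $B(T^\eps)-\langle I^\eps\rangle$ is sign-definite after integrating the first equation against an appropriate entropy, which is the standard $(H$-theorem$)$ input in this context), lets me extract a subsequence with $T^\eps\rightharpoonup T^\ast$ in $H^1(\mathcal{D})$ and $I^\eps\rightharpoonup I^\ast$ in $L^\infty(\mathcal{K})$ weak-$\ast$. Rellich compactness upgrades $T^\eps\to T^\ast$ strongly in $L^p(\mathcal{D})$, which suffices to pass to the limit in the quartic nonlinearity $B(T^\eps)$. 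The first equation, multiplied by $\eps$, then forces $I^\ast=B(T^\ast)$; testing the system against smooth functions and using the same moment identity produces the limit PDE for $T^\ast$; its uniqueness (again by monotonicity of $B$) upgrades the subsequential convergence to the full convergence asserted.

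The hard part, and the reason for the compatibility assumption $I_b=B(T_b)$ on $\Gamma_-$, is the kinetic boundary layer: without compatibility, $I_0=B(T_0)$ cannot match the prescribed inflow and one must construct and match a half-space Milne-type corrector, whose well-posedness and decay estimates are substantially more delicate and dominate the real technical work in the Klar--Schmeiser argument. Even with compatibility in place, the secondary obstacle is that passing to the limit in $B(T^\eps)=\sigma(T^\eps)^4$ requires \emph{strong}, not merely weak, convergence of $T^\eps$; this is what drives the regularity assumptions on $\partial\mathcal{D}$ and on $T_b$ in the statement, since one needs enough $H^1$ control to invoke compact Sobolev embedding and to make the nonlinear coupling closable in the limit.
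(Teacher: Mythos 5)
There is no in-paper proof to compare against here: \Cref{thm:klar} is imported verbatim (with minor modification) from Theorem~3.2 of Klar--Schmeiser \cite{KlSc:2001}, and the paper simply cites that work, together with \cite{LiSu:2019,BaGoPe:1987,BaGoPeSe:1988}, for the rigorous argument. So your proposal can only be judged against the cited literature, whose architecture it does follow: monotone/comparison arguments for well-posedness and uniform $L^\infty$ bounds, a Hilbert expansion identifying the limit, compactness to pass to the limit, and a Milne half-space corrector when $I_b \neq B(T_b)$. Your formal expansion is correct: $I_0 = B(T_0)$, $\langle I_1\rangle = B'(T_0)T_1$, $\langle I_2\rangle = B'(T_0)T_2 + \tfrac12 B''(T_0)T_1^2 + \tfrac13\Delta_x B(T_0)$ via $\langle v_iv_j\rangle = \tfrac13\delta_{ij}$, leading to $\Delta_x\bigl(T_0+\tfrac13 B(T_0)\bigr)=0$, which is exactly \cref{eqn:t_nrte}.

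Judged as a proof rather than a roadmap, however, the compactness step hides the real work and as written would not go through. The uniform $H^1(\mathcal{D})$ bound on $T^\eps$ --- which is what the theorem's ``$H^1$ weak'' convergence requires --- does not follow from testing $\eps^2\Delta_x T^\eps = B(T^\eps)-\langle I^\eps\rangle$ against $T^\eps - T_b$: because of the $\eps^2$ weight, that estimate only gives $\eps\|\nabla T^\eps\|_{L^2} \lesssim 1$. One must first use the velocity-averaged transport equation to write $B(T^\eps)-\langle I^\eps\rangle = \eps\,\nabla_x\cdot\langle v I^\eps\rangle$ and control the flux $\langle v I^\eps\rangle$ (again through the transport equation and a dissipation/entropy identity of the form $\int (I^\eps-B(T^\eps))^2 \lesssim \eps$-type bounds); this coupled estimate is the technical core of the Klar--Schmeiser proof, and your one-line appeal to ``sign-definiteness after integrating against an appropriate entropy'' is not a substitute (indeed $B(T^\eps)-\langle I^\eps\rangle$ has no sign). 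Without that bound you also lose the Rellich step that lets you pass to the limit in the quartic nonlinearity. Two smaller gaps: the recovery of the Dirichlet data $T^\ast|_{\partial\mathcal{D}} = T_b$ in the limit (this is precisely where the compatibility $I_b=B(T_b)$ and boundary-layer control enter, not just the interior expansion) is not argued; and the monotonicity of the alternating transport/elliptic iteration for the \emph{coupled} system, which underlies your existence, uniqueness, and uniform $L^\infty$ claims, is asserted rather than checked, although that part is standard and consistent with \cite{KlSc:2001,LiSu:2019}.
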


\begin{remark}
Without appropriate boundary conditions $I_b = B(T_b)$, boundary
layers of width $O(\eps)$ may emerge as $\eps\to 0$.  It is
conjectured in~\cite{KlSc:2001} that the boundary layers in the
neighborhood of each point $\hat{x}\in\partial\mathcal{D}$ can be
characterized by the following one-dimensional Milne problem for
$y\in[0,\infty)$:
\begin{align*}
-(v\cdot n_{\hat{x}})\partial_y \hat{I} &= B(\hat{T}) - \hat{I}\,,\\
\partial_y^2 \hat{T} &= B(\hat{T}) - \langle \hat{I} \rangle\,,\\
\hat{T}(0) &= T_b(\hat{x}),\quad \hat{I}(0,v) = I_b(\hat{x},v),\; \text{ for }v\cdot n_{\hat{x}}<0\,,
\end{align*}
where $y = \frac{(x-\hat{x})\cdot n_{\hat{x}}}\eps$ represents a
rescaling of the layer. The solutions that are bounded at infinity are
used to form the Dirichlet boundary conditions for~\cref{eqn:t_nrte}:
At the limit as $y\to\infty$, $B(\hat{T}) = \langle\hat{I}\rangle =
\hat{I}$, and one uses $T(\hat{x}) = \hat{T}(\infty)$.
\end{remark}

According to Theorem~\ref{thm:klar}, in the zero limit of $\eps$, $I^\eps$ loses
its velocity dependence and is proportional to $(T^\eps)^4$ that
satisfies a semi-linear elliptic equation. Since the information in
the velocity domain is lost, we expect low dimensionality of the
(discretized) solution set. For the slab problem for
RTE~\cref{eqn:nrte_1d}, the number of grid points needed for a
satisfactory numerical result is $N_xN_v$, with both $N_x$ and $N_v$
scaling as $O(\frac{1}\eps)$ for numerical accuracy. Thus, for every
given configuration of boundary conditions, the numerical solution is
one data point in an $N_xN_v$-dimensional space --- a space of very
high dimension.
However, when $\eps$ is small, the solutions are approximately given
by the limiting elliptic equation~\cref{eqn:t_nrte} and the number of
grid points needed is a number $N_x^\ast$ that has no dependence on
$\eps$. This implies that the point clouds in the
$O(1/\eps^2)$-dimensional space can be essentially represented using
$O(1)$ degrees of freedom: The solution manifold is approximately low
dimensional. (Savings are even greater for problems with higher
physical / velocity dimension.)

The use of a limiting equation to speed up the computation of kinetic
equations is not new. For Boltzmann-type equations (for which RTE
serves as a typical example),
one is interested in designing algorithms that automatically
reconstruct the limiting solutions with low computational cost. The
algorithms that achieve this property are called
``asymptotic-preserving'' (AP)
methods~\cite{Ji:1999,LeMi:2008,FiJi:2010,FiJi:2011,DiPa:2012,JiLi:2013,HuJiLi:2017,LiWa:2017,DiPa:2014,De:2011},
because the asymptotic limits are preserved automatically. There are
many successful examples of AP schemes, but most of them depend
strongly on the analytical understanding of the limiting equation. The
solver of the limiting equation is built into the Boltzmann solver, to
drag the numerical solution to its macroscopic description. Such
a design scheme limits the application of AP methods significantly. Many
kinetic equations have unknown limiting behavior, making the use of AP
designs impossible. By contrast, \Cref{alg:general} does not rely on
any explicit information of the limiting equation, and is able to deal
with general kinetic equations with small scales.

\subsection{Low dimensionality of the tangent space}\label{sec:Tan_RTE}
As for the example of \Cref{sec:LowRank_elliptic}, we start by
studying some basic properties of the local solution manifold and its
tangential plane.

We first randomly pick a point $(\ol{I}^\eps\,,\ol{T}^\eps)$ on the
solution manifold around which to perform tangential
approximation. Nearby points $(I^\eps\,,T^\eps)$ are obtained by
solutions to the RTE~\cref{eqn:nrte_1d} with respect to perturbed
boundary conditions. The boundary conditions for
$(\ol{I}^\eps\,,\ol{T}^\eps)$ and $(I^\eps\,,T^\eps)$, respectively, are
\begin{equation}\label{eqn:point_cloud_bdy}
(\ol{I}^\eps|_{\Gamma_-}\,,\ol{T}^\eps|_{\partial\mathcal{D}}) = (\ol{I}_b\,,\ol{T}_b)\,,\quad (I^\eps|_{\Gamma_-}\,,T^\eps|_{\partial\mathcal{D}}) = (I_b\,,T_b)\,,
\end{equation}
and we assume close proximity, in the sense that
\begin{equation} \label{eq:7sn}
\|\ol{I}_b-I_b\|_{L^2(\Gamma_{m,-})} + \|\ol{T}_b-T_b\|_2 = O(\delta)\,.
\end{equation}
Using the notation $\delta I^\eps := I^\eps - \ol{I}^\eps$ and $\delta
T^\eps := T^\eps - \ol{T}^\eps$ for the difference of the two
solutions, we find that this difference satisfies the equations
\begin{equation}\label{eqn:delta_equation}
\begin{cases}
\eps v \partial_x \delta I^\eps = B(\ol{T}^\eps+\delta T^\eps) - B(\ol{T}^\eps) - \delta I^\eps\,,\\
\eps^2 \partial^2_x \delta T^\eps = B(\ol{T}^\eps+\delta T^\eps) - B(\ol{T}^\eps) - \langle \delta I^\eps \rangle\,,
\end{cases}
\end{equation}
with boundary conditions:
\[
\delta I^\eps|_{\Gamma_-} = \ol{I}_b - I_b\,,\quad \delta T^\eps|_{\partial\mathcal{D}} = \ol{T}_b - T_b\,.
\]

By varying $I_b$ and $T_b$ (subject to \eqref{eq:7sn}), we obtain a
list of solutions $(\delta I^\eps\,,\delta T^\eps)$ that spans the
tangent plane of the solution manifold surrounding
$(\ol{I}^\eps\,,\ol{T}^\eps)$. It will be shown below that this plane
is low dimensional. We have the following result.
\begin{theorem}
Let $(\delta I^\eps,\delta T^\eps)$
solve~\cref{eqn:delta_equation}. As $\eps\to0$, we have $(\delta
I^\eps,\delta T^\eps)\rightarrow(\delta I^\ast,\delta T^\ast)$ so that
$\delta I^\ast = \langle \delta I^\ast \rangle = B(\ol{T}^\ast +
\delta T^\ast) - B(\ol{T}^\ast)$ and $\delta T^\ast$ solves:
\begin{equation}\label{eqn:asymp}
\partial_x^2 \left[\delta T^\ast+\tfrac{1}{3}B(\ol{T}^\ast+\delta T^\ast)-\tfrac{1}{3}B(\ol{T}^\ast)\right] = 0\,.
\end{equation}
Here the reference state $\ol{T}^\ast$ solves:
\begin{equation}\label{eqn:asymp_center}
\partial_x^2 \left[\ol{T}^\ast + \tfrac{1}{3}B(\ol{T}^\ast)\right] = 0\,.
\end{equation}
Both equations are equipped with appropriate Dirichlet type boundary
conditions. Furthermore, for small $\delta$,  the leading order
equation is
\begin{equation}
\Delta_x \left[ \left(1 + \tfrac{1}{3} B'(\ol{T}^\ast) \right)\delta T^\ast \right] = 0\,.
\end{equation}
\end{theorem}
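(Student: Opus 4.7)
The plan is to reduce the statement to Theorem~\ref{thm:klar} by applying it twice, once to the reference pair $(\ol{I}^\eps,\ol{T}^\eps)$ and once to the perturbed pair $(I^\eps,T^\eps)=(\ol{I}^\eps+\delta I^\eps,\ol{T}^\eps+\delta T^\eps)$, and then exploiting the linearity of the limit operation. This is the same template used in the proof of \Cref{thm:elliptic_homogenization_point_cloud}: the nonlinearity is dealt with inside the homogenization theorem, not in the subtraction step.

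First I would observe that both pairs solve the full nonlinear system~\cref{eqn:nrte_1d} with the boundary data in~\cref{eqn:point_cloud_bdy}. Assuming the compatibility hypotheses $\ol{I}_b=B(\ol{T}_b)$ and $I_b=B(T_b)$ of \Cref{thm:klar}, we obtain $\ol{T}^\eps\rightharpoonup \ol{T}^\ast$ and $T^\eps\rightharpoonup T^\ast$ weakly in $H^1(\mathcal{D})$, as well as $\ol{I}^\eps\rightharpoonup B(\ol{T}^\ast)$ and $I^\eps\rightharpoonup B(T^\ast)$ weakly-$\ast$ in $L^\infty(\mathcal{K})$, where $\ol{T}^\ast$ solves~\cref{eqn:asymp_center} with boundary data $\ol{T}_b$ and $T^\ast$ solves the analogous equation with boundary data $T_b$. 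Setting $\delta T^\ast:=T^\ast-\ol{T}^\ast$ and $\delta I^\ast:=B(T^\ast)-B(\ol{T}^\ast)=B(\ol{T}^\ast+\delta T^\ast)-B(\ol{T}^\ast)$, linearity yields $\delta T^\eps\rightharpoonup \delta T^\ast$ and $\delta I^\eps\rightharpoonup \delta I^\ast$. Since the right-hand side is $v$-independent, $\delta I^\ast=\langle\delta I^\ast\rangle$ automatically, and the boundary trace $\delta T^\ast|_{\partial\mathcal{D}}=T_b-\ol{T}_b$ is inherited from the separate limits.

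Next, subtracting the limiting PDE for $\ol{T}^\ast$ from the one for $T^\ast$ yields~\cref{eqn:asymp} immediately, because $\partial_x^2$ commutes with the subtraction. For the linearization I would Taylor expand
\[
B(\ol{T}^\ast+\delta T^\ast)-B(\ol{T}^\ast)=B'(\ol{T}^\ast)\,\delta T^\ast+O(\delta^2),
\]
which is justified by the smoothness of $B(T)=\sigma T^4$ together with the uniform boundedness of $\ol{T}^\ast$ (and hence of $\delta T^\ast$) furnished by the positivity/maximum-principle statement in \Cref{thm:klar}. Retaining the leading order in $\delta$ inside~\cref{eqn:asymp} and distributing $\partial_x^2$ produces the stated linearized equation.

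The main obstacle I expect is boundary-layer compatibility. \Cref{thm:klar} requires $I_b=B(T_b)$ to rule out the Milne boundary layers of width $O(\eps)$. Because the tangent-plane construction perturbs $(I_b,T_b)$ essentially freely around $(\ol{I}_b,\ol{T}_b)$, one must either restrict the perturbations to the compatible sub-manifold so that both pairs fall under the theorem's hypotheses, or supplement the argument with a quantitative Milne-layer estimate showing that the layer contribution is $o(1)$ in the weak topology used. Once both weak limits are secured, no further nonlinear manipulation is performed after the passage to the limit, so the remaining subtraction and Taylor-expansion steps are routine and rely on no regularity beyond what \Cref{thm:klar} already provides.
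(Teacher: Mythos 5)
Your proposal follows essentially the same route as the paper's own proof: apply \Cref{thm:klar} separately to the reference pair $(\ol{I}^\eps,\ol{T}^\eps)$ and the perturbed pair $(I^\eps,T^\eps)$, pass to the weak limits, and subtract, with the linearized equation following from a Taylor expansion of $B$. Your additional caveat about the compatibility condition $I_b=B(T_b)$ (needed to invoke \Cref{thm:klar} without Milne boundary layers) is a legitimate point of care that the paper's brief proof does not address, but it does not change the argument's structure.
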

\begin{proof}
Apply~\Cref{thm:klar} (in one dimension) to the equation for
$(\ol{I}^\eps\,,\ol{T}^\eps)$ to obtain
\[
\begin{cases}
\eps v \partial_x \ol{I}^\eps = B(\ol{I}^\eps) - \ol{I}^\eps\\
\eps^2 \partial_x^2 \ol{T}^\eps = B(\ol{I}^\eps) - \langle \ol{I}^\eps\rangle,
\end{cases}
\]
and the equation~\cref{eqn:nrte_1d} for $(I^\eps\,,T^\eps)$. Together,
these equations show that $(\ol{I}^\eps\,,\ol{T}^\eps)$ converges
weakly to $(\ol{I}^\ast\,,\ol{T}^\ast)$ that
solves~\cref{eqn:asymp_center}, and also that $(I^\eps\,,T^\eps)$
converges weakly to $(I^\ast\,,T^\ast)$ that
solves~\cref{eqn:t_nrte}. Taking the difference for
$(\ol{I}^\eps\,,\ol{T}^\eps)$ and $(I^\eps\,,T^\eps)$ we find that
$(\delta I^\eps,\delta T^\eps)$ converges to $(\delta I^\ast,\delta
T^\ast)$, which solves~\cref{eqn:asymp}.
\end{proof}

In one dimension, the elliptic problem only has two degrees of
freedom, determined by the two Dirichlet boundary conditions. This
suggests that in the limit as $\eps\to0$, for relatively small
$\delta$, the tangent plane spanned by $(\delta I^\eps,\delta T^\eps)$
is asymptotically two-dimensional, and is parameterized by the two
boundary conditions for $\delta T^\eps$.  (A similar reduction holds
in higher dimensions, but we leave the implementation to future work.)

\subsection{Implementation of the algorithm}
In RTE, the domain setup needs some extra care, and we need to
re-perform partitioning. The physical boundaries are no longer the
boundaries at which the Dirichlet conditions are imposed, and the
general framework in~\Cref{sec:algorithm} for PDE with Dirichlet
boundary condition on the physical boundaries has to be changed
accordingly. For the $(1+1)$D case, we set
\[
\Kcal = \mathcal{D}\times \mathcal{V} = [0,L]\times[-1,1] \,; \text{ then } \Gamma_{-} = \{(0,v):v>0\}\cup\{(1,v):v<0\}\,,
\]
with boundary conditions
\[
I^\eps|_{\Gamma_-} = g = (g^{(1)}(0,\cdot),g^{(2)}(L,\cdot)), \quad
T^\eps(0) = \theta^{(1)}, \quad T^\eps(L) = \theta^{(2)}\,,
\]
where $g^{(1)}$ is supported only on $v>0$ while $g^{(2)}$ is
supported only on $v<0$. For notational simplicity, we write
\[
u := (I^\eps,T^\eps)\,,\quad u|_{\Gamma_-}:=\phi = (g^{(1)}(0,
\cdot),g^{(2)}(L,\cdot),\theta^{(1)},\theta^{(2)})\,.
\]

To partition the domain, we divide $\Kcal$ into $M$ overlapping patches:
\begin{equation}
\Kcal = \bigcup_{m=1}^{M}\Kcal_m, \quad \mathrm{with} \quad \Kcal_m = \mathcal{D}_m\times \mathcal{V} = [t_m,s_m]\times[-1,1],\label{eqn:decomp}
\end{equation}
where $t_m$ and $s_m$ are left and right boundaries for the $m$-th patch, satisfying
\[
0=t_1<t_2<s_1<t_3<\dots<s_{M-2}<t_{M}<s_{M-1}<s_M=L\,.
\]
The size of the $m$th patch in $x$ direction is denoted as $d_m =
t_m-s_m$. For each patch, we define the local incoming boundary
coordinates as follows:
\begin{equation}
\Gamma_{m,-} = \{(t_m,v):v>0\}\cup\{(s_m,v):v<0\}\,.
\end{equation}
See \Cref{fig:RTE_decomp} for an illustration of the configuration.

\begin{figure}[htbp]
  \centering
  \includegraphics[width=0.9\textwidth]{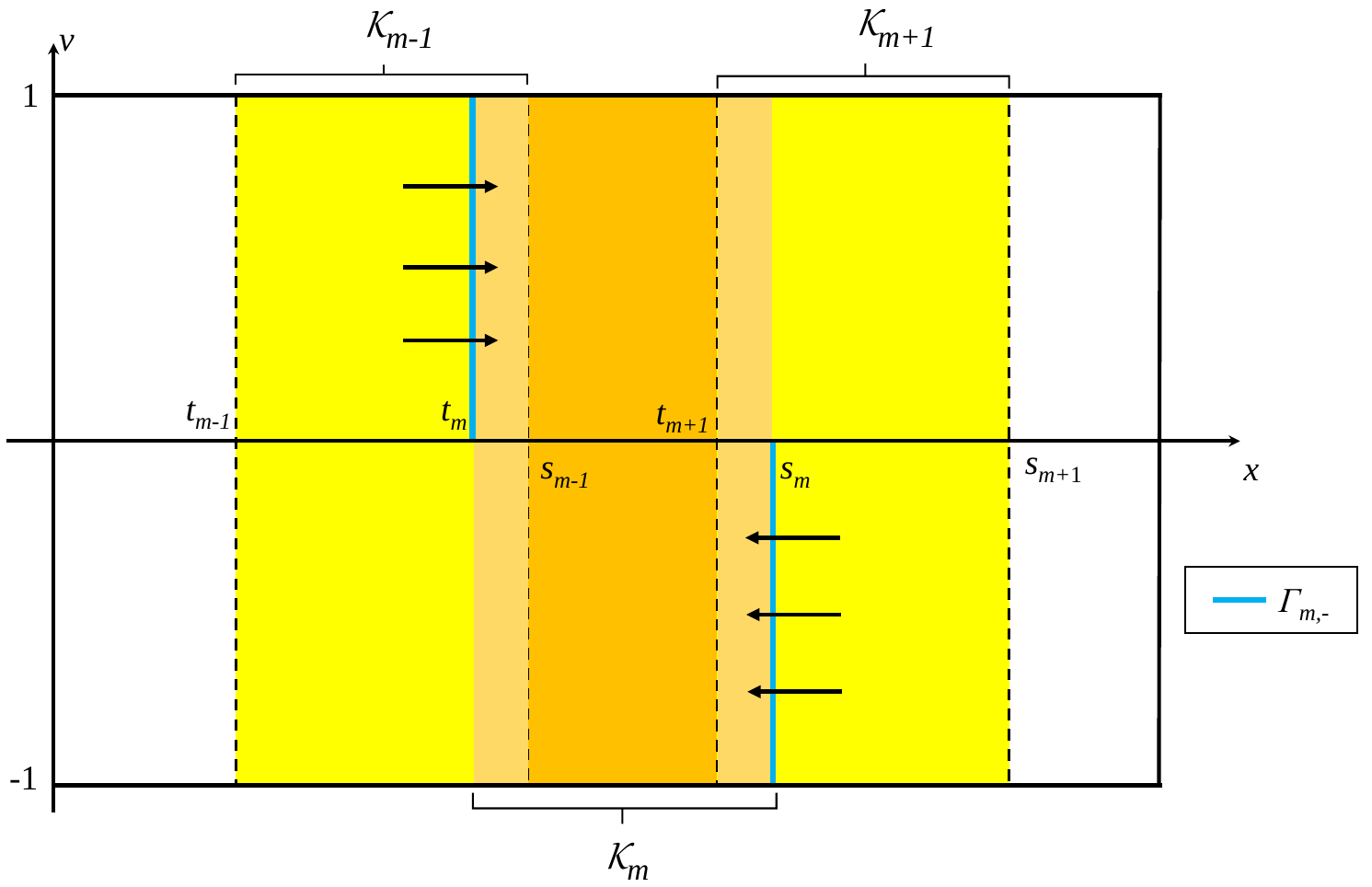}
  \caption{Domain decomposition for nonlinear RTE and the incoming boundary of the local patch.}
  \label{fig:RTE_decomp}
\end{figure}

In this particular setup, according to~\cite{LiSu:2019}, if $\phi$ is
in the space
\[
\mathcal{X} = L^2(\Gamma_{-})\times \mathbb{R}_+^2 = \left\{\left(g,\theta^{(1)},\theta^{(2)}\right)\mid g\in L^2(\Gamma_{-}); \; \theta^{(1)},\theta^{(2)}\geq0\right\}\,,
\]
then there exists a unique positive solution in the space
\[
\mathcal{Y} = H_2^1(\Kcal)\times
H^1(\mathcal{D})=\left\{(I,T)\mid I\in H_2^1(\Kcal),T\in
H^1(\mathcal{D})\right\}\,,
\]
where $H_2^1(\Kcal)$ is the space of functions for which the following
norm is finite:
\[
\|I\|_{H_2^1(\Kcal)} = \|I\|_{L^2(\Kcal)} + \|v \partial_x I\|_{L^2(\Kcal)}\,.
\]
Note that the trace operators $T_{\pm}u = u|_{\Gamma_{\pm}}$ are well-defined maps from
$H_2^1(\Kcal)$ to $L^2(\Gamma_\pm)$ (see, for
example~\cite{Ag:2012}).

To proceed, we define several operators. We denote spaces associated
with each patch $m$ as follows:
\begin{align*}
\mathcal{X}_m & := L^2(\Gamma_{m,-})\times \mathbb{R}_+^2 = \left\{\left(g,\theta^{(1)},\theta^{(2)}\right) \mid g\in L^2(\Gamma_{m,-}),\; \theta^{(1)},\theta^{(2)}\geq0\right\}\,,\\
\mathcal{Y}_m &:= H_2^1(\Kcal_m)\times H^1(\mathcal{D}_m)=\left\{(I,T)\mid I\in H_2^1(\Kcal_m), \; T\in H^1(\mathcal{D}_m)\right\}\,.
\end{align*}
Then we have the following operator definitions for each patch
$m$. (For simplicity of notation, we set $\sigma \equiv 1$ in the
definition \eqref{eq:defB} of $B(T)$.)
\begin{itemize}
  \item The solution operator $\mathcal{S}_m: \mathcal{X}_m \to
    \mathcal{Y}_m$ satisfies
  $\mathcal{S}_m \phi_m = u_m$,
  where $u_m = (I_m^\eps,T_m^\eps)$ solves the RTE on patch $\Kcal_m$
  with boundary condition \\ $\phi_m =
  (g_m,\theta_{m}^{(1)},\theta_{m}^{(1)})$:
  \[
  \begin{cases}
  \eps v \partial_x I_m^\eps &= (T_m^\eps)^4 - I_m^\eps\\
  \eps^2 \partial_x^2 T_m^\eps &= (T_m^\eps)^4 - \langle I_m^\eps \rangle
  \end{cases} \,, \;  (x,v)\in \Kcal_m\,,
  \]
  with $T_m^\eps(t_m) = \theta_m^{(1)}$, $T_m^\eps(s_m) = \theta_m^{(2)}$, and
  \[
  I_m^\eps|_{\Gamma_{m,-}} = g_m(x,v) = (g_m^{(1)}(x,v),g_m^{(2)}(x,v))\,.
  \]
  \item The restriction operator $\mathcal{I}_{m\pm1}^{m}$ from patch
    $\Kcal_m$ to the boundaries of adjacent patches, namely,
    $\Kcal_m\cap\Gamma_{m\pm1,-}$ and
    $\mathcal{D}_m\cap\partial\mathcal{D}_{m\pm1,-}$, is defined as follows:
  \begin{align*}
  &\mathcal{I}_{m+1}^{m}u_m = (I_m^\eps|_{\Kcal_m\cap\Gamma_{m+1,-}},T_m^\eps|_{\mathcal{D}_m\cap\partial\mathcal{D}_{m+1,-}}), \quad m = 1,\dotsc,M-1\,,\\
  &\mathcal{I}_{m-1}^{m}u_m = (I_m^\eps|_{\Kcal_m\cap\Gamma_{m-1,-}},T_m^\eps|_{\mathcal{D}_m\cap\partial\mathcal{D}_{m-1,-}}), \quad m = 2,\dotsc,M\,.
  \end{align*}
  \item The boundary update operator
    $\mathcal{P}_m:\mathcal{X}_{m-1}\oplus\mathcal{X}_{m+1}\rightarrow\mathcal{X}_m$
    is defined for $m \neq 1$ and $m \neq M$ by
  \begin{equation}
  \mathcal{P}_m(\phi_{m-1},\phi_{m+1}) =
  (\mathcal{I}_m^{m-1}\mathcal{S}_{m-1}\phi_{m-1},\mathcal{I}_m^{m+1}\mathcal{S}_{m+1}\phi_{m+1}).
  \end{equation}
  For the two ``end'' patches $\Kcal_1$ and $\Kcal_M$ that intersect
  with physical boundary $\Gamma_-$, boundary conditions are updated
  only in the interior of the domain:
  \begin{alignat*}{2}
  \mathcal{P}_1 &:\mathcal{X}\times\mathcal{X}_{2}\rightarrow\mathcal{X}_1,\quad & \mathcal{P}_1(\phi,\phi_{2}) & = (\phi|_{\Gamma_-\cap\Gamma_{1,-}},\mathcal{I}_1^{2}\mathcal{S}_{2}\phi_{2})\,, \\
  \mathcal{P}_M &:\mathcal{X}_{M-1}\times\mathcal{X}\rightarrow\mathcal{X}_M,\quad & \mathcal{P}_M(\phi_{M-1},\phi) &= (\mathcal{I}_M^{M-1}\mathcal{S}_{M-1}\phi_{M-1},\phi|_{\Gamma_-\cap\Gamma_{M,-}})\,.
  \end{alignat*}
\end{itemize}

As suggested by~\Cref{alg:general}, in the offline stage, we construct
local dictionaries on interior patches from a few random samples,
enlarging each interior patch slightly to eliminate the boundary layer
effect. Define $\wt{\Kcal}_m$ and $\wt{\mathcal{D}}_m$ such that
\[
\Kcal_m \subset\wt{\Kcal}_m = \wt{\mathcal{D}}_m\times \mathcal{V}\,,
\]
where $\mathcal{D}_m \subset \wt{\mathcal{D}}_m\subset\mathcal{D}$
expands the boundary of $\mathcal{D}_m$ to both sides by a margin of
$\Delta x_{\text{b}}$. Denoting by $\wt{\Gamma}_{m,-}$ the boundary
coordinates corresponding to $\wt{\mathcal{D}}_m$, we let
$\wt{\mathcal{X}}_m=L^2(\wt{\Gamma}_{m,-})\times\mathbb{R}^2$ capture
the boundary conditions on $\partial\wt{\mathcal{D}}_m$.

We draw $N$ samples $\wt{\phi}_{m,i}$, $i=1,2,\dotsc,N$, randomly from
the set
\[
B_+(R_m; \wt{\mathcal{X}}_m) := \{\wt{\phi} = (\wt{I}_B,\wt{T}_B)\in\wt{\mathcal{X}}_m: \|\wt{\phi}\|_{\wt{\mathcal{X}}_m}\leq R_m, \; \wt{I}_B\geq 0, \; \wt{T}_B\geq 0\} \,.
\]
(The sampling procedure is discussed in in~\Cref{app:nrte}.)
The local
solutions $\wt{u}_{m,i} = (\wt{I}_{m,i}^\eps,\wt{T}_{m,i}^\eps)$ solve
\begin{equation}\label{eqn:patch_learn}
\begin{aligned}
&\begin{cases}
\eps v \partial_x \wt{I}_{m,i}^\eps = (\wt{T}_{m,i}^\eps)^4 - \wt{I}_{m,i}^\eps\\
\eps^2 \partial_x^2 \wt{T}_{m,i}^\eps = (\wt{T}_{m,i}^\eps)^4 - \langle\wt{I}_{m,i}^\eps \rangle
\end{cases}
\;\; (x,v)\in \wt{\Kcal}_m\,, \\
&\quad (\wt{I}_{m,i}^\eps|_{\wt{\Gamma}_{m,-}},\wt{T}_{m,i}^\eps|_{\partial\wt{\mathcal{D}}_m}) = \wt{\phi}_{m,i}\,, \quad i =1,2,\dotsc,N.
\end{aligned}
\end{equation}
The solutions to these equations, confined to the original patch
$\Kcal_m$ and its boundary $\Gamma_{m}$, are used to construct two
dictionaries:
\begin{equation}
\mathscr{I}_m = \{\psi_{m,i}\}_{i=1}^N,\quad \mathscr{B}_m = \{\phi_{m,i}\}_{i=1}^N.
\end{equation}
where
\[
\psi_{m,i} = (\wt{I}_{m,i}^\eps|_{\Kcal_m},\wt{T}_{m,i}^\eps|_{\mathcal{D}_m})\,,\quad \phi_{m,i} = (\wt{I}_{m,i}^\eps|_{\Gamma_{m,-}},\wt{T}_{m,i}^\eps|_{\partial\mathcal{D}_m})\,.
\]

In the online stage, at each iteration, we seek neighbors to
interpolate for local solutions.
We use the $L^2$ norm to measure the distance between the newly generated solutions and the older solution set.
Denote by $\phi_m^{(n)}$ the solution
at the $n$-th iteration in patch $\Kcal_m$, and define by
\[
\{\phi_{m,i_q^{(n)}}\,, \quad q=1,2,\dotsc, k\}
\]
its $k$ nearest neighbors in $\mathscr{B}_m$, for some chosen positive
integer $k$, with the indices $i_q^{(n)}$ being ordered so that
$\phi_{m,i_1^{(n)}}$ is the nearest neighbor. Then we define the local
tangential approximation $\mathcal{S}_m\phi_m^{(n)}$ by:
\begin{equation}\label{eqn:approx_soln_op}
u_m^{(n)}=\mathcal{S}_m\phi_m^{(n)} = \psi_{m,i_1^{(n)}}+\Psi_{m}^{(n)}c_m^{(n)}\,,
\end{equation}
where $\Psi_m^{(n)}$ and $c_m^{(n)}$ are defined as
in~\cref{eqn:tan_space_general} and~\cref{eqn:coef_general}. The local
solution is then updated as follows:
\begin{equation}\label{eqn:patch2}
\phi_{m}^{(n+1)} = \mathcal{P}_m(\phi_{m-1}^{(n)},\phi_{m+1}^{(n)}) = (\mathcal{I}_m^{m-1}\mathcal{S}_{m-1}\phi_{m-1}^{(n)},\mathcal{I}_m^{m+1}\mathcal{S}_{m+1}\phi_{m+1}^{(n)})\,.
\end{equation}
For $m = 1$ and $m= M$, to avoid updating the physical boundary, we
set
\begin{align*}
\mathcal{P}_1(\phi,\phi_{2}^{(n)}) & = (\phi|_{\Gamma_-\cap\Gamma_{1,-}},\mathcal{I}_1^{2}\mathcal{S}_{2}\phi_{2}^{(n)})\,, \\
\mathcal{P}_M(\phi_{M-1}^{(n)},\phi) & = (\mathcal{I}_M^{M-1}\mathcal{S}_{M-1}\phi_{M-1}^{(n)},\phi|_{\Gamma_-\cap\Gamma_{M,-}})\,.
\end{align*}

Once the convergence is achieved (at iteration $n$, say), we assemble
the final solution as
\begin{equation}\label{eqn:global_soln}
u_\text{final} = u^{(n)} = \sum_{m=1}^M \chi_m u_m^{(n)}\,,
\end{equation}
with $\chi_m:\Omega\rightarrow\mathbb{R}$ being the smooth partition of unity
associated with the partition of $\Kcal$.

\subsection{Numerical Tests}\label{sec:num_rte}
In the numerical tests, we take the domain to be
\[
\Kcal = \mathcal{D}\times\mathcal{V} = [0,L] \times [-1,1] =
      [0,3]\times[-1,1].
\]
To form the patch $\Kcal_m = \mathcal{D}_m\times\mathcal{V}$, the
domain $\mathcal{D}$ is divided into $M = 7$ non-overlapping patches
whose widths are $d_1 = d_7 = \tfrac{L}{2(M-1)} = 0.25$ and $d_i =
\tfrac{L}{M-1} = 0.5$, $i = 2,\cdots,M-1$. Each patch is then enlarged
by $\Delta x_{\text{o}} = .125$ to both sides (except the ones
adjacent to the physical boundary, which are enlarged only on the
``internal'' sides), so we have
\[
\begin{aligned}
&\mathcal{D}_m = \left(\tfrac{L(2m-1)}{2(M-1)}-\Delta x_{\text{o}},\tfrac{L(2m-1)}{M}+\Delta x_{\text{o}}\right)\,, \;\; m=2,\dots,M-1\,,\\
&\mathcal{D}_1 = \left(0,\tfrac{L}{2(M-1)}+\Delta x_{\text{o}}\right)\,,\quad \mathcal{D}_M = \left(L-\tfrac{3}{2(M-1)}-\Delta x_{\text{o}},3\right)\,.
\end{aligned}
\]
The region of overlap between adjacent patches $\Kcal_m$ has size
$2\Delta x_{\text{o}}\times [-1,1]$.
The partition of unity functions over each patch $\Kcal_m$ are obtained using the method of~\Cref{sec:num_elliptic}

Denote the spatial grid points by $0 = x_0 < x_1 < \cdots < x_{N_x-1}
< x_{N_x} = L$, which is a uniform grid with step size $\Delta x =
\tfrac{L}{N_x}$.  The velocity grid points are denoted by
$-1<v_1<v_2<\cdots<v_{N_v-1}<v_{N_v}<1$ for some even value of
$N_v$. We use the Gauss-Legendre quadrature points for the $v_i$.  The
numerical solutions are denoted by $I^{ij} \approx I(x_i,v_j)$ and
$T^i \approx T(x_i)$. To quantify the numerical error, we denote the
discrete $L^2$ norm of $u = ([I^{ij}],[T^{i}])$ by
\begin{align*}
\|u\|^2_2 &= \sum_{j=1}^{N_v} w_j \frac{\Delta x}{2}  |I^{0j}|^2 + \sum_{j=1}^{N_v} w_j \frac{\Delta x}{2}  |I^{iN_x}|^2 + \sum_{i = 1}^{N_x-1}\sum_{j=1}^{N_v} w_j \Delta x  |I^{ij}|^2 \\
& \quad + \frac{\Delta x}{2} |T^0|^2 + \frac{\Delta x}{2} |T^{N_x}|^2 + \sum_{i = 1}^{N_x-1} \Delta x |T^i|^2 \,,
\end{align*}
where $w_j$ is the Gauss-Legendre weight, and the relative error
$u_{\text{ref}}$ between a reference solution and an approximate
solution $u_{\text{approx}}$ is defined by
\[
\text{relative } L^2 \text{ error} = \frac{\|u_{\text{ref}}-u_{\text{approx}}\|_2}{\|u_{\text{ref}}\|_2}\,.
\]

We solve the PDE using finite differences. The intensity equation is
discretized in space by a classical second-order exponential finite
difference scheme~\cite{Il:1969,RoStTo:2008}, and the temperature
equation is approximated by the standard three-point scheme. The
resulting nonlinear system is then solved by fixed point
iteration~\cite{KlSc:2001,LiSu:2019}, where in each evaluation of the
fixed point map, the monotone iterative method is exploited to solve
the semilinear elliptic equation. For computations with $\eps =
2^{-4}$ and $\eps = 2^{-6}$, we further use Anderson acceleration to
boost the convergence of fixed point
iteration~\cite{An:1965,FaSa:2009,WaNi:2011}.

We use extremely fine discretization with
$\Delta x = 2^{-14} = \tfrac{1}{16384}$ and $N_v =
 2^{10} = 1024$. The discretization is fine enough for
us to view it as the reference solution.
All the other computations are done with coarser mesh $\Delta x =
2^{-11} = \tfrac{1}{2048}$ and $N_v = 2^{7} = 128$.

The boundary condition
$\phi=(g^{(1)},g^{(2)},\theta^{(1)},\theta^{(2)})$ is defined as
follows:
\begin{align*}
g^{(1)}(0,v>0) = 3+\sin(2\pi v)\,,& \quad g^{(2)}(L=3,v<0) = 2+\sin(2\pi v)\,,\\
   \theta(0) = \theta^{(1)} = 2 \,,& \quad \theta(L) = \theta^{(2)} = 3 \,. \\
\end{align*}

The enlarged patches needed in the offline stage, denoted by
$\wt{\Kcal}_m$, are obtained by enlarging each respective $\Kcal_m$ by
the quantity $\Delta x_{\text{b}}$.
The configuration of the domain and the partition are seen in
\Cref{fig:decomp_numerical}, where $\Delta x_{\text{b}} = .125$.

\begin{figure}[htbp]
  \centering
  \includegraphics[width=0.7\textwidth]{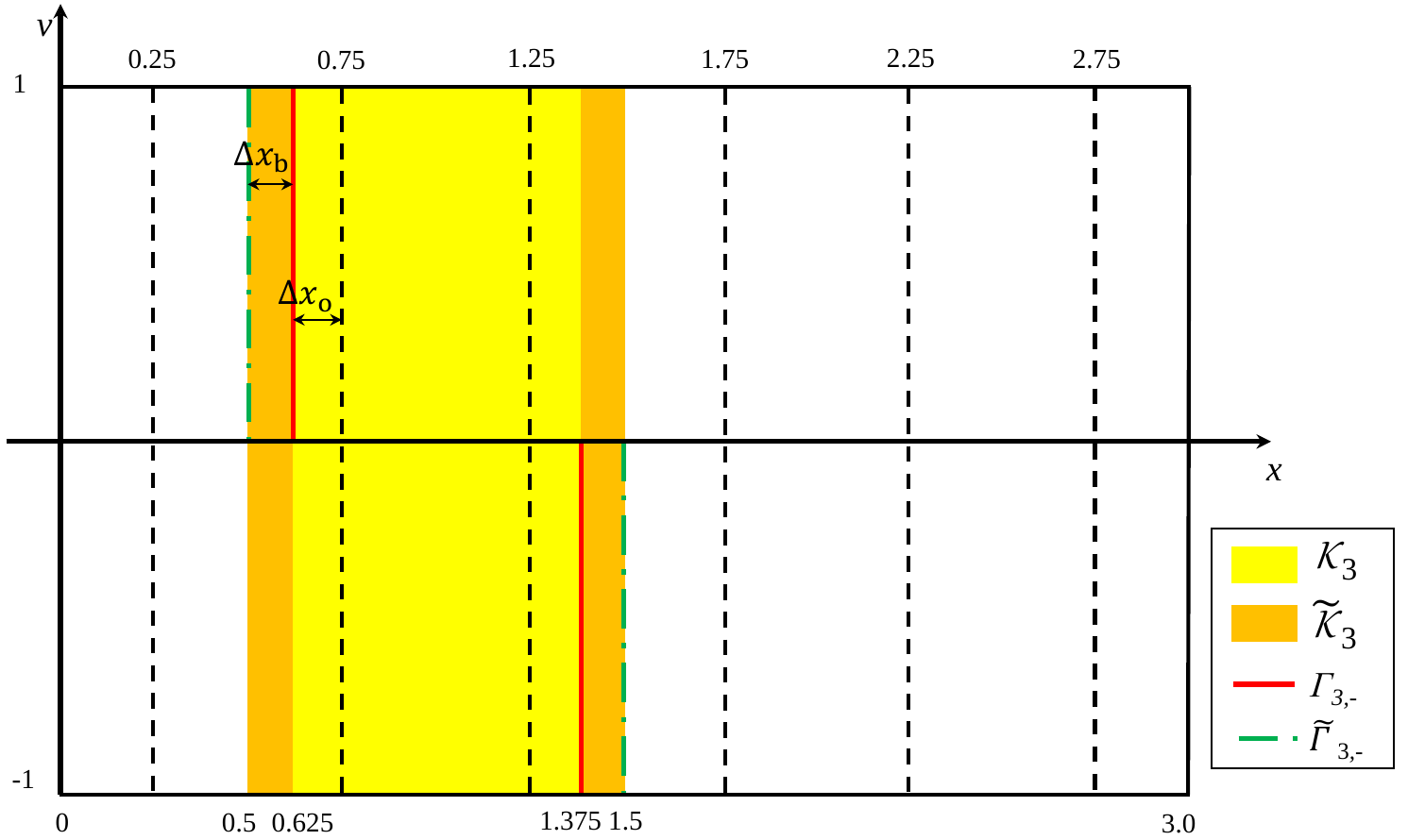}
  \caption{Configuration of patches (including enlarged patches) in the decomposed domain}
  \label{fig:decomp_numerical}
\end{figure}

On the buffered interior patch $\wt{\Kcal}_m$, we sample $N = 64$
configuration of boundary conditions in
$B_+(R_m;\wt{\mathcal{X}}_m)$. On the discrete level, this process
finds $64$ boundary conditions $\wt{\phi}$ so that
\[
\|\wt{\phi}\|^2 = \sum_{j=1}^{\frac{N_v}{2}} w_j
|\wt{g}^{(2)}(s,v_j)|^2 + \sum_{j=\frac{N_v}{2}+1}^{N_v} w_j
|\wt{g}^{(1)}(t,v_j)|^2 + |\wt{\theta}^{(1)}|^2 +
|\wt{\theta}^{(2)}|^2<R_m\,.
\]
We set $R_m = 25$ in our experiments.

To demonstrate the linearity of the updating map $\mathcal{P}_m$, we
choose the patch $\Kcal_3=[0.625,1.375]\times[-1,1]$, which overlaps
$\Kcal_2$ at $[0.625,0.875]\times[-1,1]$. For $\Delta x_{\text{b}} =
2^{-3}$ and $\eps = 2^{-6}$, we compute local solutions on the
buffered domain $\wt{\Kcal}_3$ with $64$ different configurations, and
evaluate $T$ at $0.625$ and $1.375$ (the two ending points of
$\Kcal_3$) and at $0.875$ (the point that intersects with
$\partial\Kcal_2$). In \Cref{fig:offline}, we plot $T(0.875)$ as a
function of $T(0.625)$ and $T(1.375)$. We observe that it is a slowly
varying two-dimensional manifold and is locally almost linear. Thus,
$T(0.875)$ can be determined uniquely by the pair of values
$(T(0.625),T(1.375))$.
Further, we plot $\tfrac{\langle |I(x,\cdot) - \langle
  I\rangle(x)|^2\rangle}{\langle I\rangle(x)^2}$ and $\tfrac{\langle
  I\rangle(x) - T^4(x)}{T^4(x)}$ at $x=0.875$, showing that the
relative variation is nearly zero. This means that $I$ is essentially
constant at $x=0.875$, with $I = T^4$. These calculations suggest that the
entire solution on this patch is uniquely determined by $T(0.625)$ and
$T(1.375)$, implying that the local degrees of freedom for the solution in
the entire patch is only two, so that the local solution manifold is
approximately two-dimensional.

\begin{figure}[htbp]
  \centering
  \includegraphics[width=0.32\textwidth]{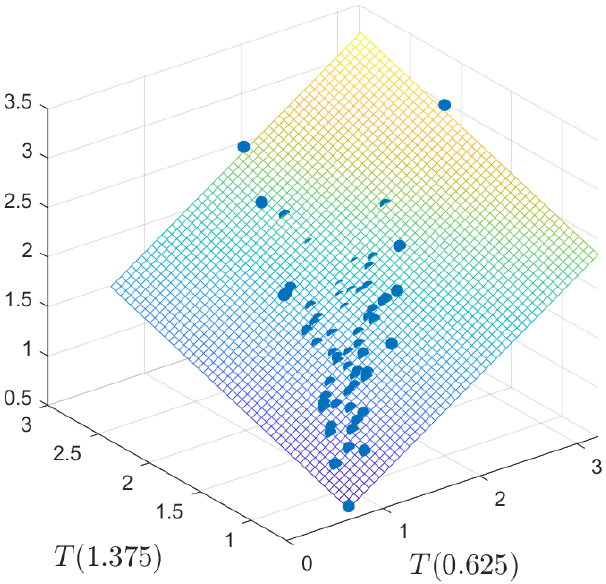}
  \includegraphics[width=0.32\textwidth]{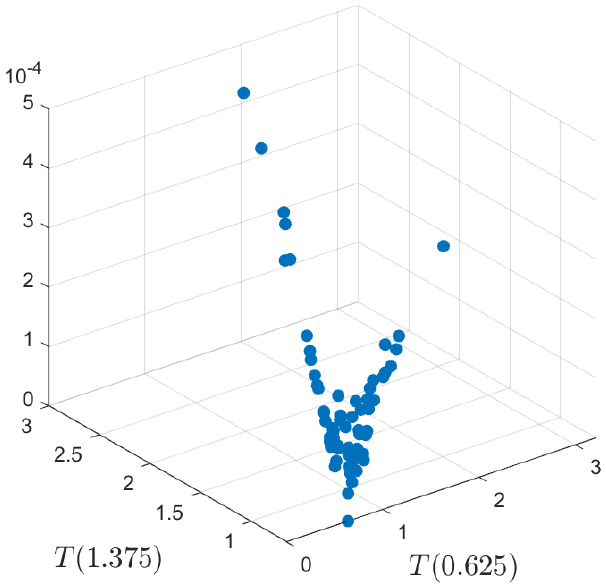}
  \includegraphics[width=0.32\textwidth]{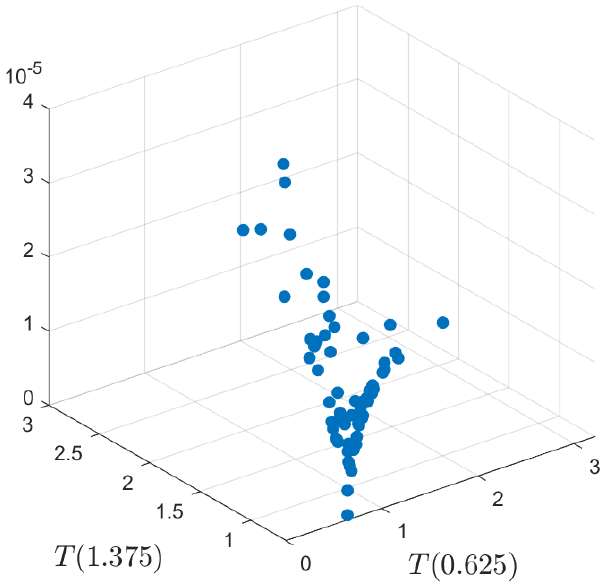}
  \caption{The plot on the left shows the point cloud
    $(T(0.625),T(1.375),T(0.875))$ and its fitting plane. We observe
    that the manifold is approximately two-dimensional, so that
    $T(0.875)$ can be uniquely determined by
    $(T(0.625),T(1.375))$. The middle and right panels show the
    quantities $\tfrac{\langle |I(x,\cdot) - \langle
      I\rangle(x)|^2\rangle}{\langle I\rangle(x)^2}$ and
    $\tfrac{\langle I\rangle(x) - T(x)^4}{T(x)^4}$ at $x=0.875$,
    respectively, showing that the solution is nearly constant, with $I=T^4$.}
  \label{fig:offline}
\end{figure}

To verify that the local dictionary represents the solution manifold
adequately, we confine the reference solution in patch $\Kcal_2$ and
project it onto the space spanned by its nearest $k$ modes in the
local dictionary. We evaluate the resulting relative error as a
function of $k$, plotting the result in \Cref{fig:error_proj}. For
$\eps = 2^{-6}$ and $\Delta x_{\text{b}} = .125$, we observe a sharp
decay of error when $k\geq 3$, meaning that the local reference
solution can be represented to acceptable accuracy by two local
dictionary modes, and suggesting once again that the local solution
manifold is two-dimensional.
\begin{figure}[htbp]
  \centering
  \subfloat[$\Delta x_{\text{b}} = 2^{-2}$]{
  \includegraphics[width=0.45\textwidth]{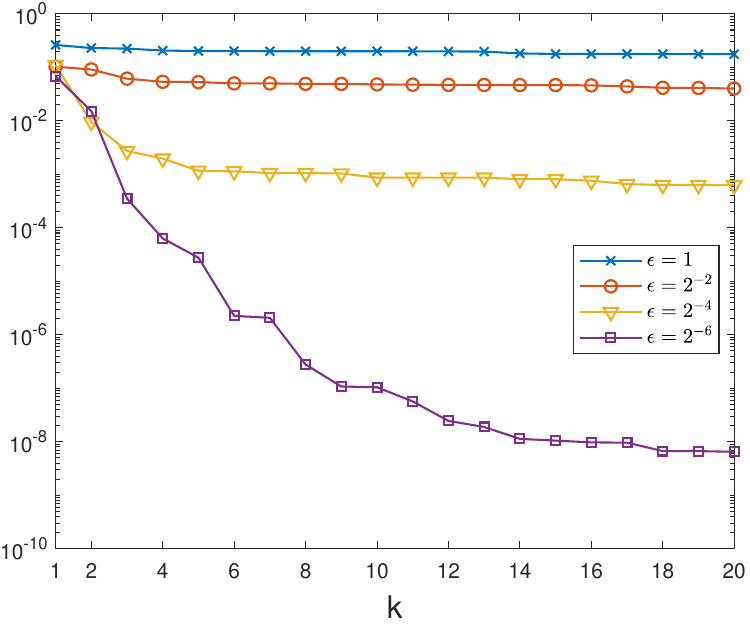}
  }
  \subfloat[$\Delta x_{\text{b}} = 2^{-3}$]{
  \includegraphics[width=0.45\textwidth]{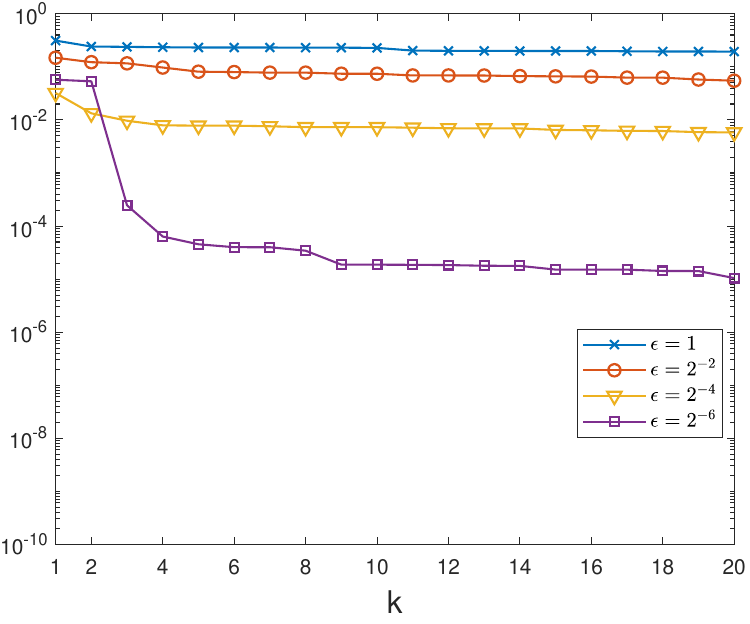}
  }
  \caption{The relative error of the $L^2$ projection of the reference solution onto the space spanned by the nearest $k$ modes on the patch $\Kcal_2$.}
  \label{fig:error_proj}
\end{figure}

The sample number $N$ and the radius $R_m$ are two crucial parameters that affect the effectiveness of the method. We check how the approximation capability of the local dictionary depends on the two parameters over the local patch $\Kcal_2$. In~\Cref{fig:error_vs_N}, we show the projection error as $N$ increases for different $R_m$. The error of the dictionary saturates as $N$ increases, and it can be used as a criterion to decide the size of the local dictionary. In~\Cref{fig:error_vs_Rm}, we show the relative projection error of the reference solution onto the local tangent space using dictionaries with different $R_m$. It can be seen that the radius $R_m$ must be large enough to obtain a good local basis.
\begin{figure}[htbp]
  \centering
  \subfloat[]{
  \includegraphics[width=0.45\textwidth]{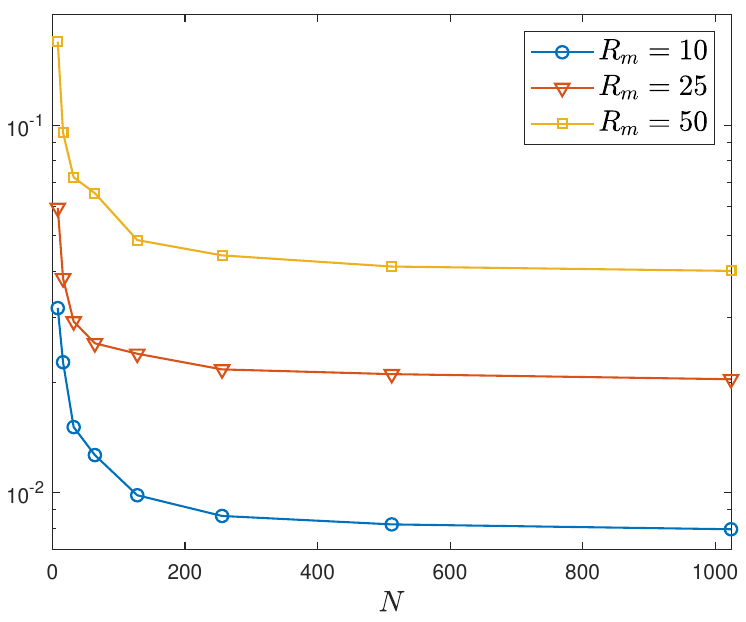}\label{fig:error_vs_N}
  }
  \subfloat[]{
  \includegraphics[width=0.45\textwidth]{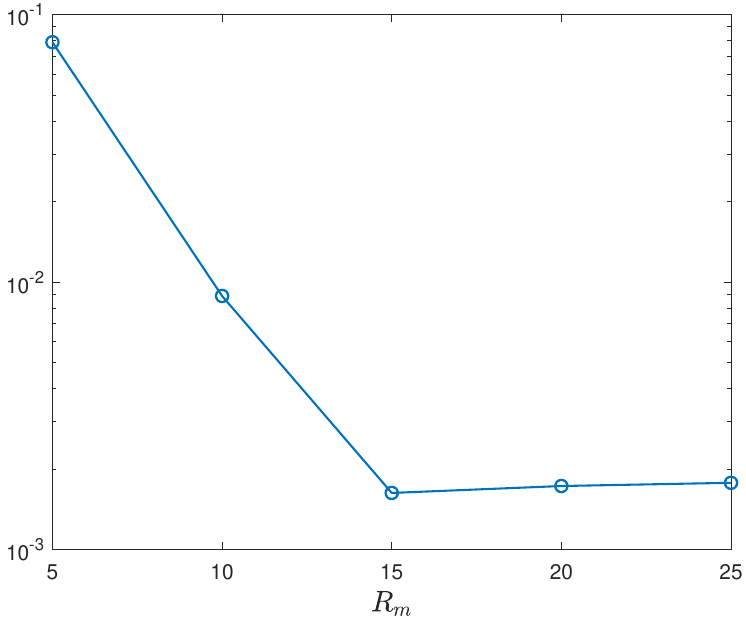}\label{fig:error_vs_Rm}
  }
  \caption{The plot on the left shows the average error of the $L^2$ projection of 100 test samples onto the space spanned by the nearest 5 modes. The test samples are generated from the same distribution as the dictionary. The plot on the right shows the relative error of the $L^2$ projection of the reference solution onto the space spanned by the nearest 5 modes on patch $\mathcal{K}_2$. The number of samples is $N = 64$ for all $R_m$.}
  \label{fig:error_Rm_N}
\end{figure}

In the online computation, we set the stopping criterion to be
\[
\sum_{m}\|\phi_{m}^{(n)}-\phi_{m}^{(n-1)}\|<10^{-3}\,,
\]
where $\phi_{m}^{(n)}$ is the boundary condition on the patch
$\Kcal_m$ at the $n$-th iteration. We take the initial boundary
condition on each patch to be trivial, setting
$\phi_{m}^{(0)}|_{\Gamma_{m,-}\backslash\Gamma_-} = 0$, except on the
real physical boundary condition, where it is set to the prescribed
Dirichlet conditions.

In \Cref{fig:ref_approx}, we compare the reference solution with our
numerical solution computed using $k=5$ and buffer zone $\Delta
x_{\text{b}} = 2^{-3}$. When $\eps = 1$, the equation is far away from
its homogenization limit, and the numerical solution is far from the
reference, but for $\eps = 2^{-6}$ the numerical solution is captured
rather well using just $k=5$ neighbors.
\begin{figure}[htbp]
  \centering
  \includegraphics[width=0.23\textwidth]{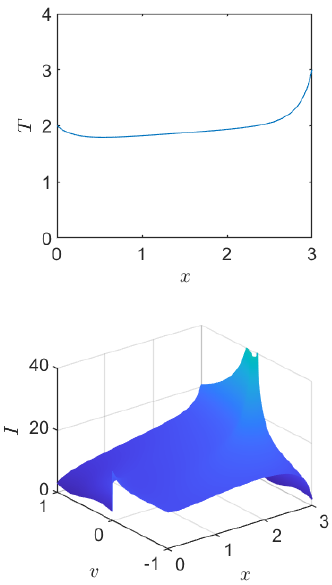}
  \includegraphics[width=0.23\textwidth]{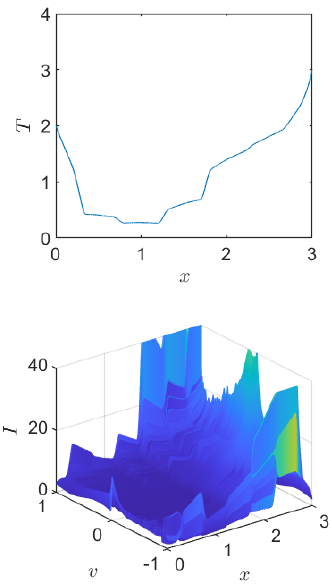}
  \includegraphics[width=0.23\textwidth]{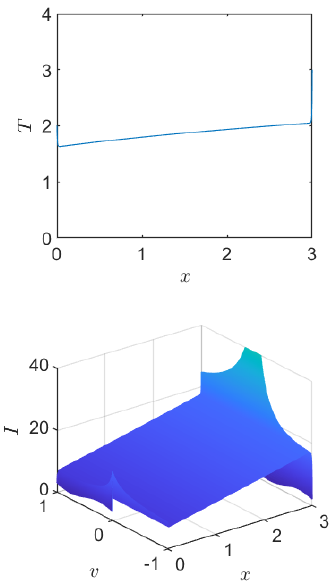}
  \includegraphics[width=0.23\textwidth]{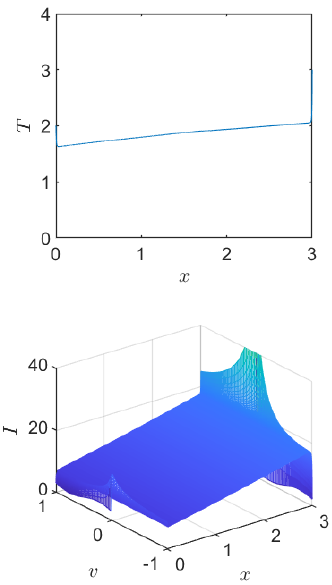}
  \caption{The first two columns of plots show the reference solution and numerical solution for $\eps = 1$, and the last two columns compare the solutions for $\eps = 2^{-6}$.}
  \label{fig:ref_approx}
\end{figure}

In \Cref{fig:error_neighbor} we document the relative error for
various values of $k$ and $\Delta x_{\text{b}}$. When $\eps$ is small,
and for buffer width $\Delta x_{\text{b}}$ sufficiently large, we need
only $k=2$ neighbors to produce a solution of acceptable accuracy.
Without the buffer zone to damp the boundary layer effect, however,
the low dimensionality of the solution manifold cannot be captured,
even for small $\eps$.

\begin{figure}[htbp]
  \centering
  \subfloat[$\Delta x_{\text{b}} = 2^{-2}$]{
  \includegraphics[width=0.45\textwidth]{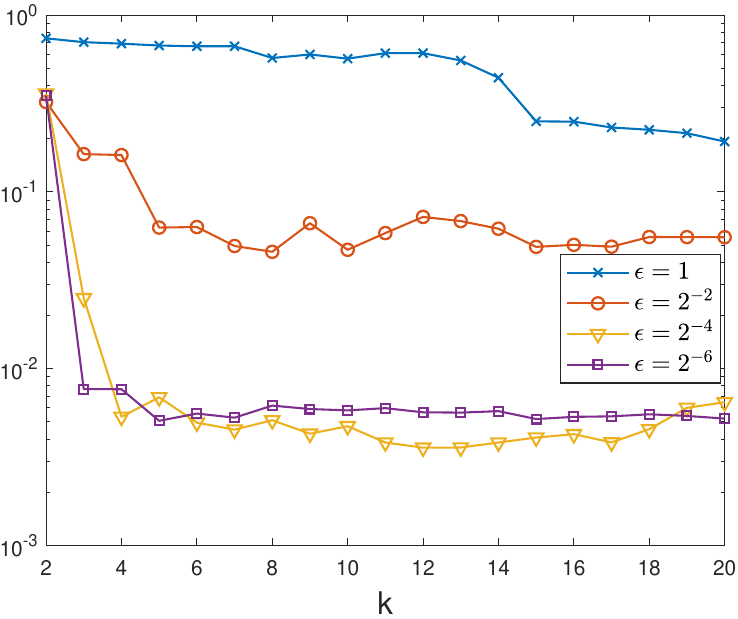}
  }
  \subfloat[$\Delta x_{\text{b}} = 2^{-3}$]{
  \includegraphics[width=0.45\textwidth]{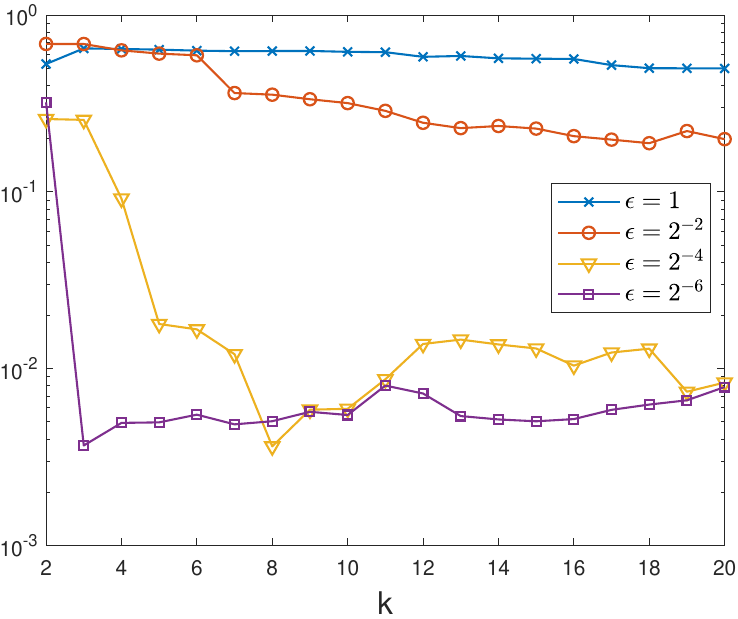}
  }

  \subfloat[$\Delta x_{\text{b}} = 2^{-4}$]{
  \includegraphics[width=0.45\textwidth]{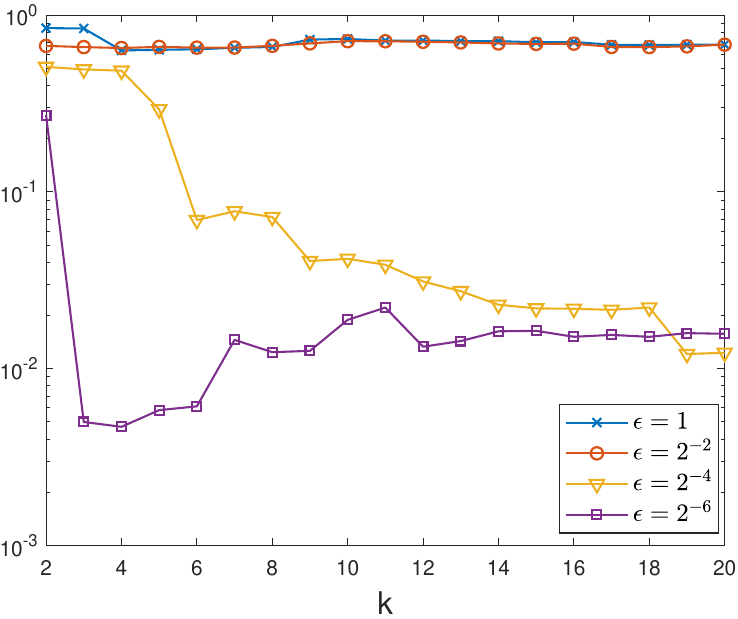}
  }
  \subfloat[$\Delta x_{\text{b}} = 0$]{
  \includegraphics[width=0.45\textwidth]{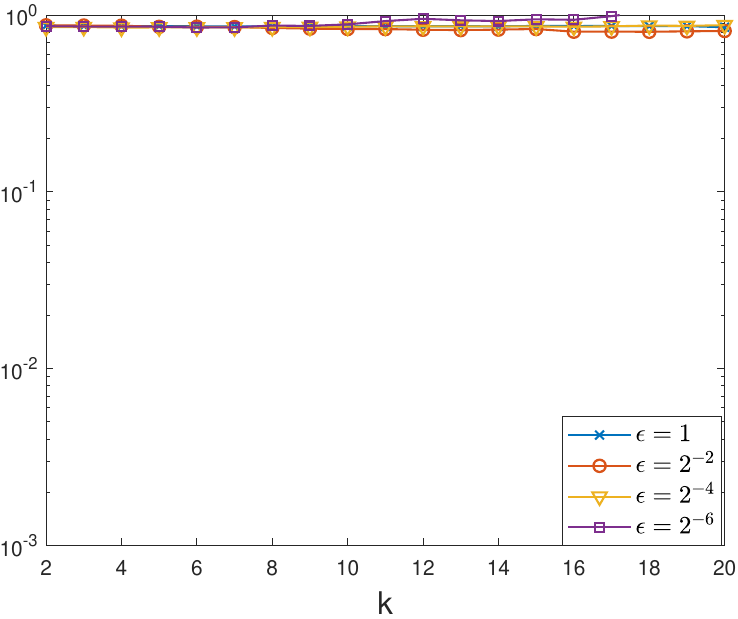}
  }
  \caption{The relative $L^2$ error in one trial as a function of $k$, for various values of $\Delta x_{\text{b}}$ and $\eps$.}
  \label{fig:error_neighbor}
\end{figure}

We also compare the cost of our reduced method with the classical
Schwarz iteration. CPU times for both methods are summarized
in~\Cref{tbl:time} for $\eps = 2^{-4}$ and $\eps = 2^{-6}$, with
buffer size $\Delta x_{\text{b}}=.125$. The online cost of the reduced
method is about $1000$ times cheaper than the classical Schwarz
iteration when $\eps = 2^{-4}$ and $4000$ times cheaper when $\eps =
2^{-6}$. Even considering the large overhead cost in the offline
stage, the reduced order method is still cheaper than Schwarz
iteration.

\begin{table}
	\centering
	\begin{tabular}{l | c | c |c | c}
		\hline \hline
							& \multicolumn{4}{c}{CPU Time (s)}\\
		\hline
							& \multicolumn{2}{c|}{$\eps = 2^{-4}$} & \multicolumn{2}{c}{$\eps = 2^{-6}$}\\
		\hline
		& offline & online & offline & online \\
		\hline
		Reduced model $k=3$  	& 394.3911  &	0.181324			&  904.7498  &   0.215390 \\
		Reduced model $k=5$	    & 		   &	0.301761			&            &   0.222538 \\
		Reduced model $k=10$	& 	       &	0.379348		    &            &   0.282070 \\
		Reduced model $k=15$	&    	   &	0.548689			&            &   0.346633 \\
		Reduced model $k=20$	& 	       &	0.586276			&            &   0.532603 \\
		\hline
		Classical Schwarz       & |       & 458.0987             &  | & 2183.7079\\
		
		\hline\hline
	\end{tabular}
    \caption{CPU time comparison between reduced model method with  $k = 3,5,10,15,20$ (size of each local dictionary $N = 64$).}
\label{tbl:time}
\end{table}

Finally, we reiterate that due to the nonlinear nature of the
equations, the concept of ``basis function'' is not well defined. The
reduced model method for linear equations was proposed
in~\cite{ChLiLuWr:2018,ChLiLuWr:2019}, where random sampling is used
to construct the boundary-to-boundary map $\mathcal{P}$, by following
the idea of randomized SVD~\cite{HaMaTr:2011}. If we translate this
approach to nonlinear homogenization, using Green's functions in
a brute-force manner, the numerical results are poor.
By the ``Green's functions,'' we mean the solution to the equation with delta boundary conditions (counterparts of Green's functions in the linear setting). The numerical results are presented in Figure~\ref{fig:greens_function}, that compares the ground-truth solution with the Green's function interpolation.

\begin{figure}[htbp]
  \centering
  \includegraphics[width=0.25\textwidth]{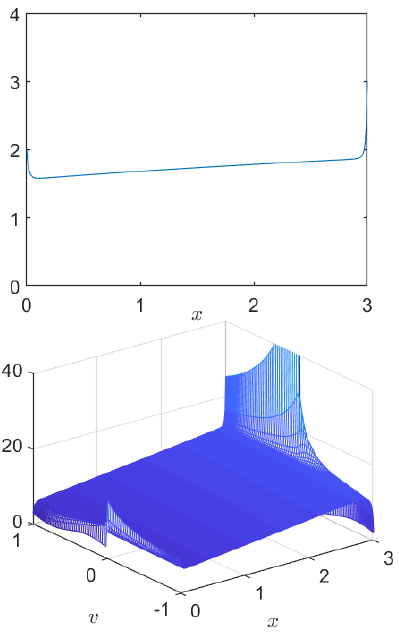}
  \includegraphics[width=0.25\textwidth]{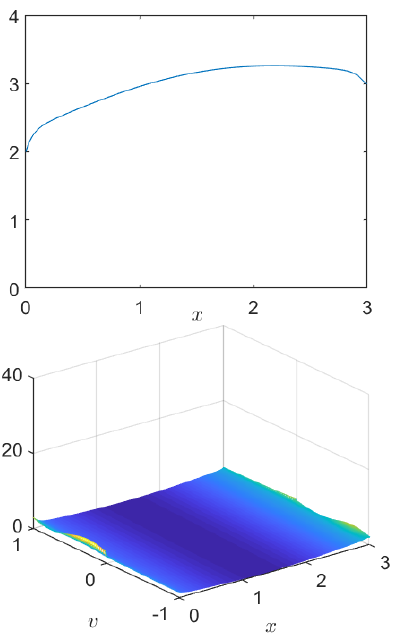}
  \caption{The left column of plots is the solution with $\eps =
    2^{-4}$, and the right column is the solution from a linear
    combination of the full set of ``Green's functions''.}
  \label{fig:greens_function}
\end{figure}

\section{Conclusion}
Multiscale physical phenomena are often described by PDEs that contain
small parameters. it is generally expensive to capture small-scale
effects using numerical solvers. There is a vast literature on
improving numerical performance of PDE solvers in this context, but
most algorithms are equation-specific, requiring analytical
understanding to be built into algorithm design.

We have described numerical methods that can capture the
homogenization limit of nonlinear PDEs with small scales
automatically, without analytical prior knowledge. This work can be
seen as a nonlinear extension of our earlier
work~\cite{ChLiLuWr:2018randomized} for linear PDEs. Elements of our
algorithm include domain decomposition framework and Schwarz
iteration. The method is decomposed into offline and online stages,
where in the offline stage, random sampling is employed to learn the
low-rank structure of the solution manifold, while in the online
stage, the reduced manifolds serve as surrogates of local solvers in
the Schwarz iteration. Since the manifolds are prepared offline and
are of low dimension, the method exhibits significant speedup over
naive approaches, as we demonstrate using computational results on two
examples.

\newpage

\begin{appendix}
\section{Sampling method for the semilinear elliptic equation}~\label{app:elliptic}

We explain here the sampling method for the semilinear elliptic
equation in~\Cref{sec:num_elliptic}. To enforce the boundary condition
on the physical boundary, patches that intersect this boundary should
be treated differently from patches inside the domain. (We call the
patch $\wt{\Omega}_m$ an ``interior patch'' if it satisfies
$\partial\wt{\Omega}_m\cap\partial\Omega = \varnothing$, and a
``boundary patch'' otherwise.)

\subsection{Sampling for interior patches}

For the interior patch $\partial\wt{\Omega}_m$, each sample in
$B(R_m;\,\wt{\mathcal{X}}_m)$ is decomposed into radial and angular
parts $\widetilde{\phi} = rX$, with the two parts $r$ and $X$ sampled
independently. The radial part $r$ is generated so that
$(\frac{r}{R_m})^{D}$ is uniformly distributed in the unit interval
$[0,1]$, where $D$ is a preset integer. (We choose $D = 5$ and $R_m = R = 20$ in our
tests.) The angular part $X$ is a $N_m$-dimensional vector uniformly
distributed in the set $\{X\in\mathbb{R}^{N_m}:\|X\|_{1/2} = 1\}$,
where $N_m$ is the number of grid points on $\partial\wt{\Omega}_m$,
and the norm $\|\cdot\|_{1/2}$ is defined by
\[
\|\widetilde{\phi}\|_{1/2} = \sqrt{
  h\sum_{i=1}^{N_m}|\widetilde{\phi}_i|^2 +
  h^2\sum_{\substack{i,j=1\\i\neq j}
  }^{N_m}\frac{|\widetilde{\phi}_i-\widetilde{\phi}_j|^2}{|z_i-z_j|^2}}\,.
\]
Here $\widetilde{\phi} = (\widetilde{\phi}_i)_{i=1}^{N_m}$ is any
discrete boundary condition, and $z_i$ denotes the grid point on
$\partial\widetilde{\Omega}_m$.

In order to generate $X$, let $Y_1,\dotsc,Y_{N_m}\sim\mathcal{N}(0,1)$
be i.i.d. standard Gaussian random variables. Define the weight matrix
$W=(W_{ij})_{N_m\times N_m}$ by
\[
W_{ii} = h+\sum_{\substack{j=1\\j\neq
    i}}^{N_m}\tfrac{2h^2}{|z_i-z_j|^2}\,,\quad W_{ij} =
-\tfrac{2h^2}{|z_i-z_j|^2}\,,
\]
and suppose that its Cholesky decomposition is $W=C^\top C$. Then the
vector $Z = C^{-1}(Y_1,\cdots,Y_{N_m})^\top$ has uniform angular
distribution with respect to the norm $\|\cdot\|_{1/2}$, so its
normalization $X = \tfrac{Z}{\|Z\|_{1/2}}$ is uniformly distributed on
the unit sphere $\{X\in\mathbb{R}^{N_m}:\|X\|_{1/2} = 1\}$.

\subsection{Sampling for boundary patches}

Let
\[
\widetilde{\phi}_m =
\begin{bmatrix}
\widetilde{\phi}_{m,d}\\
\widetilde{\phi}_{m,r}
\end{bmatrix}
\in\mathbb{R}^{N_m}
\]
be a random sample, with $\widetilde{\phi}_{m,d}$ representing the
physical boundary part and $\widetilde{\phi}_{m,r}$ representing the
random part. When we rearrange the weight matrix $W$ as
\[
W=
\begin{bmatrix}
W_{dd} & W_{dr} \\
W_{rd} & W_{rr}
\end{bmatrix}\,,
\]
so that $\|\widetilde{\phi}_m\|^2_{1/2} = \widetilde{\phi}_m^\top W
\widetilde{\phi}_m$, then it yields
\[
\widetilde{\phi}_{m,r}^\top W_{rr} \widetilde{\phi}_{m,r} = R_m^2 -
\widetilde{\phi}_{m,d}^\top (W_{dd}-W_{dr}W_{rr}^{-1}W_{rd})
\widetilde{\phi}_{m,d}\,,
\]
indicating that the random part lies in a ellipsoid.

Hence, the random part $\widetilde{\phi}_{m,r}$ can be sampled as
follows. We decompose it into independently sampled radial and angular
part $\widetilde{\phi}_r = r_m X_m$, so that $r_m^{D}$ is uniformly
distributed in the interval $\left[0,(R_m^2 -
  \widetilde{\phi}_{m,d}^\top (W_{dd}-W_{dr}W_{rr}^{-1}W_{rd})
  \widetilde{\phi}_{m,d})^{D/2}\right]$, and $X_m$ is uniformly
distributed on the set $\{X_m:X_m^\top W_{rr} X_m = 1\}$.


\section{Sampling method for the nonlinear radiative transfer equations}~\label{app:nrte}

Here we describe the sampling method for the nonlinear radiative
transfer equations discussed in~\Cref{sec:num_rte}.

To generate samples for the interior patches $\widetilde{\Kcal}_m$, $m
= 2,\cdots,M-1$, each sample is decomposed into radial and angular
parts $\widetilde{\phi} = r X$, which are sampled independently. We
take $(\frac{r}{R_m})^2$ to be uniformly distributed in $[0,1]$,
while $X$ is a $(N_v+2)$-dimensional vector uniformly distributed in the
set $\{X\in\mathbb{R}^{N_v+2}:\|X\| = 1, X \geq 0\}$, where the norm
$\|\cdot\|$ is defined by
\[
\|\widetilde{\phi}\|^2 = \sum_{j=1}^{\frac{N_v}{2}} w_j |\widetilde{g}^{(2)}(s,v_j)|^2 + \sum_{j=\frac{N_v}{2}+1}^{N_v} w_j |\widetilde{g}^{(1)}(t,v_j)|^2 + |\widetilde{\theta}^{(1)}|^2 + |\widetilde{\theta}^{(2)}|^2\,,
\]
given any discrete boundary condition
\[
\widetilde{\phi} = \left(\{\widetilde{g}^{(2)}(s,v_j)\}_{j=1}^{\frac{N_v}{2}},\{\widetilde{g}^{(1)}(t,v_j)\}_{j=\frac{N_v}{2}+1}^{N_v},\widetilde{\theta}^{(1)},\widetilde{\theta}^{(2)}\right).
\]
Here $N_v$ is the number of grid points in the velocity direction and
the $w_j$ are the Gaussian-Legendre weights. (We choose $R_m = R = 25$ in our
tests.)

To generate $X$, let $Y_1,\dotsc,Y_{N_v+2}\sim\mathcal{N}(0,1)$ be
i.i.d. standard Gaussian random variables. Denote the vector
\[
Z = \left(\frac{Y_1}{\sqrt{w_1}},\cdots,\frac{Y_{N_v}}{\sqrt{w_{N_v}}},Y_{N_{v}+1},Y_{N_v+2}\right)\,.
\]
Then the normalized vector $X = \frac{Z}{\|Z\|}$ is uniformly
distributed on the unit sphere
$\{X\in\mathbb{R}^{N_v+2}:\|X\|=1\}$. Note that
\eqref{eqn:patch_learn} is invariant under $x$-translation, so we need
only learn one interior dictionary on one interior patch, then re-use
in for the other interior patches.

Sampling the boundary conditions on the boundary patches can be done in the same way. However, we do adjust the radius $r$. In particular, $(\frac{r}{R_{1/M}})^2$ is chosen uniformly in $[0,1]$, where $R_{1/M}$ has the fixed boundary condition deducted from $R$.

\end{appendix}

\newpage
\bibliographystyle{siamplain}
\bibliography{ref}

\begin{thebibliography}{10}

\bibitem{AbPuVo:2012}
{\sc N.~Abdallah, M.~Puel, and M.~Vogelius}, {\em Diffusion and homogenization
  limits with separate scales}, Multiscale Model. Simul., 10 (2012),
  pp.~1148--1179.

\bibitem{AbBa:2012}
{\sc A.~Abdulle and Y.~Bai}, {\em Reduced basis finite element heterogeneous
  multiscale method for high-order discretizations of elliptic homogenization
  problems}, J. Comput. Phys., 231 (2012), pp.~7014 -- 7036.

\bibitem{AbBaVi:2015}
{\sc A.~Abdulle, Y.~Bai, and G.~Vilmart}, {\em Reduced basis finite element
  heterogeneous multiscale method for quasilinear elliptic homogenization
  problems}, Discrete Contin. Dyn. Syst. Ser. S, 8 (2015), pp.~91--118.

\bibitem{AbSc:2005}
{\sc A.~Abdulle and C.~Schwab}, {\em Heterogeneous multiscale {FEM} for
  diffusion problems on rough surfaces}, Multiscale Model. Simul., 3 (2005),
  pp.~195--220.

\bibitem{AbVi:2014}
{\sc A.~Abdulle and G.~Vilmart}, {\em Analysis of the finite element
  heterogeneous multiscale method for quasilinear elliptic homogenization
  problems}, Math. Comp., 83 (2014), pp.~513--536.

\bibitem{Ag:2012}
{\sc V.~Agoshkov}, {\em Boundary Value Problems for Transport Equations},
  Springer Science \& Business Media, 2012.

\bibitem{Al:1992}
{\sc G.~Allaire}, {\em Homogenization and two-scale convergence}, SIAM J. Math.
  Anal., 23 (1992), pp.~1482--1518.

\bibitem{AlChMa:2012}
{\sc W.~K. Allard, G.~Chen, and M.~Maggioni}, {\em Multi-scale geometric
  methods for data sets {II}: {G}eometric multi-resolution analysis}, Appl.
  Comput. Harmon. Anal., 32 (2012), pp.~435--462.

\bibitem{AmMo:1971}
{\sc H.~Amann and J.~Moser}, {\em On the existence of positive solutions of
  nonlinear elliptic boundary value problems}, Indiana Univ. Math. J., 21
  (1971), pp.~125--146.

\bibitem{An:1965}
{\sc D.~G. Anderson}, {\em Iterative procedures for nonlinear integral
  equations}, J.ACM, 12 (1965), pp.~547--560.

\bibitem{AnInRa:2018}
{\sc A.~Andoni, P.~Indyk, and I.~Razenshteyn}, {\em Approximate nearest
  neighbor search in high dimensions}, arXiv preprint arXiv:1806.09823, 7
  (2018).

\bibitem{BaLi:2011}
{\sc I.~Babu{\v{s}}ka and R.~Lipton}, {\em Optimal local approximation spaces
  for generalized finite element methods with application to multiscale
  problems}, Multiscale Model. Simul., 9 (2011), pp.~373--406.

\bibitem{BaMe:1997}
{\sc I.~Babu{\v{s}}ka and J.~M. Melenk}, {\em The partition of unity method},
  Internat. J. Numer. Methods Engrg., 40 (1997), pp.~727--758.

\bibitem{BaGoLe:1991}
{\sc C.~Bardos, F.~Golse, and D.~Levermore}, {\em Fluid dynamic limits of
  kinetic equations. {I}. {F}ormal derivations}, J. Stat. Phys., 63 (1991),
  pp.~323--344.

\bibitem{BaGoPe:1987}
{\sc C.~Bardos, F.~Golse, and B.~Perthame}, {\em The {R}osseland approximation
  for the radiative transfer equations}, Comm. Pure Appl. Math., 40 (1987),
  pp.~691--721.

\bibitem{BaGoPeSe:1988}
{\sc C.~Bardos, F.~Golse, B.~Perthame, and R.~Sentis}, {\em The nonaccretive
  radiative transfer equations: existence of solutions and {R}osseland
  approximation}, J. Funct. Anal., 77 (1988), pp.~434--460.

\bibitem{BaSaSe:1984}
{\sc C.~Bardos, R.~Santos, and R.~Sentis}, {\em Diffusion approximation and
  computation of the critical size}, Trans. Amer. Math. Soc., 284 (1984),
  pp.~617--649.

\bibitem{Be:2007}
{\sc M.~Bebendorf}, {\em Why finite element discretizations can be factored by
  triangular hierarchical matrices}, SIAM J. Numer. Anal., 45 (2007),
  pp.~1472--1494.

\bibitem{BrNi:2002}
{\sc M.~Belkin and P.~Niyogi}, {\em Laplacian eigenmaps and spectral techniques
  for embedding and clustering}, in Adv. in Neural Inform. Process. Systems,
  2002, pp.~585--591.

\bibitem{BeBoMu:1992}
{\sc A.~Bensoussan, L.~Boccardo, and F.~Murat}, {\em H convergence for
  quasi-linear elliptic equations with quadratic growth}, Appl. Math. Optim.,
  26 (1992), pp.~253--272.

\bibitem{BeLiPa:1979}
{\sc A.~Bensoussan, J.-L. Lions, and G.~Papanicolaou}, {\em Boundary layers and
  homogenization of transport processes}, Publ. Res. Inst. Math. Sci., 15
  (1979), pp.~53--157.

\bibitem{BeLiPa:2011}
{\sc A.~Bensoussan, J.-L. Lions, and G.~Papanicolaou}, {\em Asymptotic Analysis
  for Periodic Structures}, vol.~374, American Mathematical Soc., 2011.

\bibitem{buhr2018randomized}
{\sc A.~Buhr and K.~Smetana}, {\em Randomized local model order reduction},
  SIAM Journal on Scientific Computing, 40 (2018), pp.~A2120--A2151.

\bibitem{Ch:1957}
{\sc S.~Chandrasekhar}, {\em An introduction to the study of stellar
  structure}, vol.~2, Courier Corporation, 1957.

\bibitem{ChLiLi:2018}
{\sc K.~Chen, Q.~Li, and J.~G. Liu}, {\em Online learning in optical
  tomography: a stochastic approach}, Inverse Problems., 34 (2018), p.~075010.

\bibitem{ChLiLuWr:2018}
{\sc K.~Chen, Q.~Li, J.~Lu, and S.~J. Wright}, {\em Random sampling and
  efficient algorithms for multiscale {PDE}s}, arXiv preprint arXiv:1807.08848,
   (2018).

\bibitem{ChLiLuWr:2018randomized}
{\sc K.~Chen, Q.~Li, J.~Lu, and S.~J. Wright}, {\em Randomized sampling for
  basis functions construction in generalized finite element methods}, arXiv
  preprint arXiv:1801.06938,  (2018).

\bibitem{ChLiLuWr:2019}
{\sc K.~Chen, Q.~Li, J.~Lu, and S.~J. Wright}, {\em A low-rank {S}chwarz method
  for radiative transport equation with heterogeneous scattering coefficient},
  arXiv preprint arXiv:1906.02176,  (2019).

\bibitem{ChSa:2008}
{\sc Z.~Chen and T.~Y. Savchuk}, {\em Analysis of the multiscale finite element
  method for nonlinear and random homogenization problems}, SIAM J. Numer.
  Anal., 46 (2008), pp.~260--279.

\bibitem{Ch:2009}
{\sc M.~Chipot}, {\em Elliptic Qquations: An Introductory Course}, Springer,
  2009.

\bibitem{ChEfLeWh:2018nonlinear}
{\sc E.~T. Chung, Y.~Efendiev, W.~T. Leung, and M.~Wheeler}, {\em Nonlinear
  nonlocal multicontinua upscaling framework and its applications},
  International Journal for Multiscale Computational Engineering, 16 (2018).

\bibitem{ChEfLeZh:2018cluster}
{\sc E.~T. Chung, Y.~Efendiev, W.~T. Leung, and Z.~Zhang}, {\em Cluster-based
  generalized multiscale finite element method for elliptic {PDE}s with random
  coefficients}, J. Comput. Phys., 371 (2018), pp.~606--617.

\bibitem{CoLaLeMaNaWaZu:2005}
{\sc R.~R. Coifman, S.~Lafon, A.~B. Lee, M.~Maggioni, B.~Nadler, F.~Warner, and
  S.~W. Zucker}, {\em Geometric diffusions as a tool for harmonic analysis and
  structure definition of data: {D}iffusion maps}, Proc. Natl. Acad. Sci. USA,
  102 (2005), pp.~7426--7431.

\bibitem{De:2011}
{\sc P.~Degond}, {\em Asymptotic-preserving schemes for fluid models of
  plasmas}, CEMRACS 2010: Numerical methods for fusion, arXiv preprint
  arXiv:1104.1869,  (2011).

\bibitem{DiPaVa:2012}
{\sc E.~Di~Nezza, G.~Palatucci, and E.~Valdinoci}, {\em Hitchhiker's guide to
  the fractional {S}obolev spaces}, Bull. Sci. Math., 136 (2012), pp.~521--573.

\bibitem{DiPa:2012}
{\sc G.~Dimarco and L.~Pareschi}, {\em High order asymptotic-preserving schemes
  for the {B}oltzmann equation}, C. R. Math. Acad. Sci. Paris, 350 (2012),
  pp.~481--486.

\bibitem{DiPa:2014}
{\sc G.~Dimarco and L.~Pareschi}, {\em Numerical methods for kinetic
  equations}, Acta Numer., 23 (2014), pp.~369--520.

\bibitem{DuGo:2000}
{\sc L.~Dumas and F.~Golse}, {\em Homogenization of transport equations}, SIAM
  J. Appl. Math., 60 (2000), pp.~1447--1470.

\bibitem{EWEn:2003}
{\sc W.~E and B.~Engquist}, {\em The heterogeneous multiscale methods}, Commun.
  Math. Sci., 1 (2003), pp.~87--132.

\bibitem{EMiZh:2005}
{\sc W.~E, P.~Ming, and P.~Zhang}, {\em Analysis of the heterogeneous
  multiscale method for elliptic homogenization problems}, J. Amer. Math. Soc.,
  18 (2005), pp.~121--156.

\bibitem{EfGaLiPr:2014}
{\sc Y.~Efendiev, J.~Galvis, G.~Li, and M.~Presho}, {\em Generalized multiscale
  finite element methods. {N}onlinear elliptic equations}, Commun. Comput.
  Phys., 15 (2014), pp.~733--755.

\bibitem{EfHo:2009}
{\sc Y.~Efendiev and T.~Y. Hou}, {\em Multiscale finite element methods: theory
  and applications}, vol.~4, Springer Science \& Business Media, 2009.

\bibitem{EfHoGi:2004}
{\sc Y.~Efendiev, T.~Y. Hou, and V.~Ginting}, {\em Multiscale finite element
  methods for nonlinear problems and their applications}, Commun. Math. Sci., 2
  (2004), pp.~553--589.

\bibitem{EfHoWu:2000}
{\sc Y.~R. Efendiev, T.~Y. Hou, and X.-H. Wu}, {\em Convergence of a
  nonconforming multiscale finite element method}, SIAM J. Numer. Anal., 37
  (2000), pp.~888--910.

\bibitem{FaSa:2009}
{\sc H.-R. Fang and Y.~Saad}, {\em Two classes of multisecant methods for
  nonlinear acceleration}, Numer. Linear Algebra Appl., 16 (2009),
  pp.~197--221.

\bibitem{FiJi:2010}
{\sc F.~Filbet and S.~Jin}, {\em A class of asymptotic-preserving schemes for
  kinetic equations and related problems with stiff sources}, J. Comput. Phys.,
  229 (2010), pp.~7625--7648.

\bibitem{FiJi:2011}
{\sc F.~Filbet and S.~Jin}, {\em An asymptotic preserving scheme for the
  {ES-BGK} model of the {B}oltzmann equation}, J. Comput. Phys., 46 (2011),
  pp.~204--224.

\bibitem{GeMaMaPo:1997}
{\sc P.~G{\'e}rard, P.~Markowich, N.~Mauser, and F.~Poupaud}, {\em
  Homogenization limits and {W}igner transforms}, Comm. Pure Appl. Math., 50
  (1997), pp.~323--379.

\bibitem{GiTr:2015}
{\sc D.~Gilbarg and N.~Trudinger}, {\em Elliptic Partial Differential Equations
  of Second Order}, Springer, 2015.

\bibitem{GoMe:2001}
{\sc T.~Goudon and A.~Mellet}, {\em Diffusion approximation in heterogeneous
  media}, Asymptot. Anal., 28 (2001), pp.~331--358.

\bibitem{GoMe:2003}
{\sc T.~Goudon and A.~Mellet}, {\em Homogenization and diffusion asymptotics of
  the linear {B}oltzmann equation}, ESAIM Control Optim. Calc. Var., 9 (2003),
  pp.~371--398.

\bibitem{GuWu:2017}
{\sc Y.~Guo and L.~Wu}, {\em Geometric correction in diffusive limit of neutron
  transport equation in 2{D} convex domains}, Arch. Ration. Mech. Anal., 226
  (2017), pp.~321--403.

\bibitem{Ha:2015}
{\sc W.~Hackbusch}, {\em Hierarchical matrices: algorithms and analysis},
  vol.~49, Springer, Heidelberg, 2015.

\bibitem{HaMaTr:2011}
{\sc N.~Halko, P.-G. Martinsson, and J.~A. Tropp}, {\em Finding structure with
  randomness: {P}robabilistic algorithms for constructing approximate matrix
  decompositions}, SIAM Rev., 53 (2011), pp.~217--288.

\bibitem{HeMaPe:2014}
{\sc P.~Henning, A.~M{\aa}lqvist, and D.~Peterseim}, {\em A localized
  orthogonal decomposition method for semi-linear elliptic problems}, ESAIM
  Math. Model. Numer. Anal., 48 (2014), pp.~1331--1349.

\bibitem{HOU2017375}
{\sc T.~Y. Hou, F.-N. Hwang, P.~Liu, and C.-C. Yao}, {\em An iteratively
  adaptive multi-scale finite element method for elliptic pdes with rough
  coefficients}, Journal of Computational Physics, 336 (2017), pp.~375 -- 400,
  \url{https://doi.org/https://doi.org/10.1016/j.jcp.2017.02.002},
  \url{http://www.sciencedirect.com/science/article/pii/S0021999117300955}.

\bibitem{HouLiZhang_low_dimension}
{\sc T.~Y. Hou, Q.~Li, and P.~Zhang}, {\em Exploring the locally low
  dimensional structure in solving random elliptic pdes}, Multiscale Modeling
  \& Simulation, 15 (2017), pp.~661--695.

\bibitem{HoWu:1997}
{\sc T.~Y. Hou and X.-H. Wu}, {\em A multiscale finite element method for
  elliptic problems in composite materials and porous media}, J. Comput. Phys.,
  134 (1997), pp.~169 -- 189.

\bibitem{HoWuCa:1999}
{\sc T.~Y. Hou, X.-H. Wu, and Z.~Cai}, {\em Convergence of a multiscale finite
  element method for elliptic problems with rapidly oscillating coefficients},
  Math. Comp., 68 (1999), pp.~913--943.

\bibitem{HuJiLi:2017}
{\sc J.~Hu, S.~Jin, and Q.~Li}, {\em Asymptotic-preserving schemes for
  multiscale hyperbolic and kinetic equations}, in Handbook of Numerical
  Analysis, vol.~18, Elsevier, 2017, pp.~103--129.

\bibitem{Il:1969}
{\sc A.~M. Il'in}, {\em Differencing scheme for a differential equation with a
  small parameter affecting the highest derivative}, Mathematical Notes of the
  Academy of Sciences of the USSR, 6 (1969), pp.~596--602.

\bibitem{InMo:1998}
{\sc P.~Indyk and R.~Motwani}, {\em Approximate nearest neighbors: towards
  removing the curse of dimensionality}, in Proceedings of the thirtieth annual
  ACM symposium on Theory of computing, 1998, pp.~604--613.

\bibitem{Ji:1999}
{\sc S.~Jin}, {\em Efficient asymptotic-preserving ({AP}) schemes for some
  multiscale kinetic equations}, SIAM J. Sci. Comput., 21 (1999), pp.~441--454.

\bibitem{JiLi:2013}
{\sc S.~Jin and Q.~Li}, {\em A {BGK}-penalization-based asymptotic-preserving
  scheme for the multispecies {B}oltzmann equation}, Numer. Methods Partial
  Differential Equations, 29 (2013), pp.~1056--1080.

\bibitem{JoLi:1984}
{\sc W.~B. Johnson and J.~Lindenstrauss}, {\em Extensions of {L}ipschitz
  mappings into a {H}ilbert space}, Contemporary mathematics, 26 (1984), p.~1.

\bibitem{JoLu:1973}
{\sc D.~D. Joseph and T.~S. Lundgren}, {\em Quasilinear {D}irichlet problems
  driven by positive sources}, Arch. Ration. Mech. Anal., 49 (1973),
  pp.~241--269.

\bibitem{KlMa:1981}
{\sc S.~Klainerman and A.~Majda}, {\em Singular limits of quasilinear
  hyperbolic systems with large parameters and the incompressible limit of
  compressible fluids}, Comm. Pure Appl. Math., 34 (1981), pp.~481--524.

\bibitem{KlMa:1982}
{\sc S.~Klainerman and A.~Majda}, {\em Compressible and incompressible fluids},
  Comm. Pure Appl. Math., 35 (1982), pp.~629--651.

\bibitem{KlSc:2001}
{\sc A.~Klar and C.~Schmeiser}, {\em Numerical passage from radiative heat
  transfer to nonlinear diffusion models}, Math. Models Methods Appl. Sci., 11
  (2001), pp.~749--767.

\bibitem{KlSi:1998}
{\sc A.~Klar and N.~Siedow}, {\em Boundary layers and domain decomposition for
  radiative heat transfer and diffusion equations: applications to glass
  manufacturing process}, European J. Appl. Math., 9 (1998), p.~351–372.

\bibitem{LeMi:2008}
{\sc M.~Lemou and L.~Mieussens}, {\em A new asymptotic preserving scheme based
  on micro-macro formulation for linear kinetic equations in the diffusion
  limit}, SIAM J. Sci. Comput., 31 (2008), pp.~334--368.

\bibitem{LiSu:2019}
{\sc Q.~Li and W.~Sun}, {\em Applications of kinetic tools to inverse transport
  problems}, Inverse Problems., 36 (2020), p.~035011.

\bibitem{LiWa:2017}
{\sc Q.~Li and L.~Wang}, {\em Implicit asymptotic preserving method for linear
  transport equations}, Commun. Comput. Phys., 22 (2017), pp.~157--181.

\bibitem{LiZhangZhao_dimension_reduction}
{\sc S.~Li, Z.~Zhang, and H.~Zhao}, {\em A data-driven approach for multiscale
  elliptic pdes with random coefficients based on intrinsic dimension
  reduction}, Multiscale Modeling \& Simulation, 18 (2020), pp.~1242--1271.

\bibitem{Li:1982}
{\sc P.-L. Lions}, {\em On the existence of positive solutions of semilinear
  elliptic equations}, SIAM Review, 24 (1982), pp.~441--467.

\bibitem{LiMaRo:2017}
{\sc A.~V. Little, M.~Maggioni, and L.~Rosasco}, {\em Multiscale geometric
  methods for data sets {I}: {M}ultiscale {SVD}, noise and curvature}, Appl.
  Comput. Harmon. Anal., 43 (2017), pp.~504--567.

\bibitem{MaPe:2014}
{\sc A.~M{\aa}lqvist and D.~Peterseim}, {\em Localization of elliptic
  multiscale problems}, Math. Comp., 83 (2014), pp.~2583--2603.

\bibitem{Mo:2013}
{\sc M.~F. Modest}, {\em Radiative Heat Transfer}, Elsevier Science, 2013.

\bibitem{OwZh:2007}
{\sc H.~Owhadi and L.~Zhang}, {\em Metric-based upscaling}, Comm. Pure Appl.
  Math., 60 (2007), pp.~675--723.

\bibitem{Pa:1999}
{\sc {\'E}.~Pardoux}, {\em BSDEs, weak convergence and homogenization of
  semilinear PDEs}, Springer Netherlands, 1999, pp.~503--549.

\bibitem{RoStTo:2008}
{\sc H.-G. Roos, M.~Stynes, and L.~Tobiska}, {\em Robust numerical methods for
  singularly perturbed differential equations: convection-diffusion-reaction
  and flow problems}, vol.~24, Springer Science \& Business Media, 2008.

\bibitem{RoSa:2000}
{\sc S.~T. Roweis and L.~K. Saul}, {\em Nonlinear dimensionality reduction by
  locally linear embedding}, Science, 290 (2000), pp.~2323--2326.

\bibitem{Sc:1986}
{\sc S.~Schochet}, {\em The compressible {E}uler equations in a bounded domain:
  existence of solutions and the incompressible limit}, Comm. Math. Phys., 104
  (1986), pp.~49--75.

\bibitem{SmBjGr:2004}
{\sc B.~Smith, P.~Bjorstad, and W.~Gropp}, {\em Domain decomposition: parallel
  multilevel methods for elliptic partial differential equations}, Cambridge
  University Press, 2004.

\bibitem{ToWi:2006}
{\sc A.~Toselli and O.~Widlund}, {\em Domain decomposition methods-algorithms
  and theory}, vol.~34, Springer Science \& Business Media, 2006.

\bibitem{ViAn:1975}
{\sc R.~Viskanta and E.~E. Anderson}, {\em Heat transfer in semitransparent
  solids}, in Advances in heat transfer, vol.~11, Elsevier, 1975, pp.~317--441.

\bibitem{WaNi:2011}
{\sc H.~F. Walker and P.~Ni}, {\em Anderson acceleration for fixed-point
  iterations}, SIAM J. Numer. Anal., 49 (2011), pp.~1715--1735.

\bibitem{ZhZh:2004}
{\sc Z.~Zhang and H.~Zha}, {\em Principal manifolds and nonlinear
  dimensionality reduction via tangent space alignment}, SIAM J. Sci. Comput,
  26 (2004), pp.~313--338.

\end{thebibliography}

\end{document}